\author{Andrew Gainer-Dewar\\
  \small Department of Mathematics\\
  \small Carleton College\\
  \small Northfield, MN, U.S.A.\\
  \small\tt againerdewar@carleton.edu
}
\title{$\Gamma$-species and the enumeration of $k$-trees}
\begin{document}
\maketitle
\begin{abstract}
  We study the class of graphs known as $k$-trees through the lens of Joyal's theory of combinatorial species (and an extension known as `$\Gamma$-species' which incorporates data about `structural' group actions).
  This culminates in a system of recursive functional equations giving the generating function for unlabeled $k$-trees which allows for fast, efficient computation of their numbers.
  Enumerations up to $k = 10$ and $n = 30$ (for a $k$-tree with $n + k - 1$ vertices) are included in tables, and Sage code for the general computation is included in an appendix.
\end{abstract}

\section{Introduction}\label{s:intro}
\subsection{$k$-trees}\label{ss:ktrees}
Trees and their generalizations have played an important role in the literature of combinatorial graph theory throughout its history.
The multi-dimensional generalization to so-called `$k$-trees' has proved to be particularly fertile ground for both research problems and applications.

The class $\kt{k}$ of $k$-trees (for $k \in \ringname{N}$) may be defined recursively:
\begin{definition}
  \label{def:ktree}
  The complete graph on $k$ vertices ($K_{k}$) is a $k$-tree, and any graph formed by adding a single vertex to a $k$-tree and connecting that vertex by edges to some existing $k$-clique (that is, induced $k$-complete subgraph) of that $k$-tree is a $k$-tree.
\end{definition}

The graph-theoretic notion of $k$-trees was first introduced in 1968 in \cite{harpalm:acycsimp}; vertex-labeled $k$-trees were quickly enumerated in the following year in both \cite{moon:lktrees} and \cite{beinpipp:lktrees}.
The special case $k=2$ has been especially thoroughly studied; enumerations are available in the literature for edge- and triangle-labeled $2$-trees in \cite{palm:l2trees}, for plane $2$-trees in \cite{palmread:p2trees}, and for unlabeled $2$-trees in \cite{harpalm:acycsimp} and \cite{harpalm:graphenum}.
In 2001, the theory of species was brought to bear on $2$-trees in \cite{gessel:spec2trees}, resulting in more explicit formulas for the enumeration of unlabeled $2$-trees.
An extensive literature on other properties of $k$-trees and their applications has also emerged; Beineke and Pippert claim in \cite{beinpipp:multidim} that ``[t]here are now over 100 papers on various aspects of $k$-trees''.
However, no general enumeration of unlabeled $k$-trees appears in the literature to date.

Although we do not derive a closed form for the number of $k$-trees, the work in this paper does permit efficient recursive computation of their generating function.
A formula for this generating function is given in \cref{cor:ktgf} using components defined recursively in \cref{cor:ctogf}.

To begin, we establish two definitions for substructures of $k$-trees which we will use extensively in our analysis.
\begin{definition}
  \label{def:hedfront}
  A \emph{hedron} of a $k$-tree is a $\pbrac{k+1}$-clique and a \emph{front} is a $k$-clique.
\end{definition}
We will frequently describe $k$-trees as assemblages of hedra attached along their fronts rather than using explicit graph-theoretic descriptions in terms of edges and vertices, keeping in mind that the structure of interest is graph-theoretic and not geometric.
The recursive addition of a single vertex and its connection by edges to an existing $k$-clique in \cref{def:ktree} is then interpreted as the attachment of a hedron to an existing one along some front, identifying the $k$ vertices they have in common.
The analogy to the recursive definition of conventional trees is clear, and in fact the class $\mathfrak{a}$ of trees may be recovered by setting $k = 1$.
For higher $k$, the structures formed are still distinctively tree-like; for example, $2$-trees are formed by gluing triangles together along their edges without forming loops of triangles (see \cref{fig:ex2tree}), while $3$-trees are formed by gluing tetrahedra together along their triangular faces without forming loops of tetrahedra.

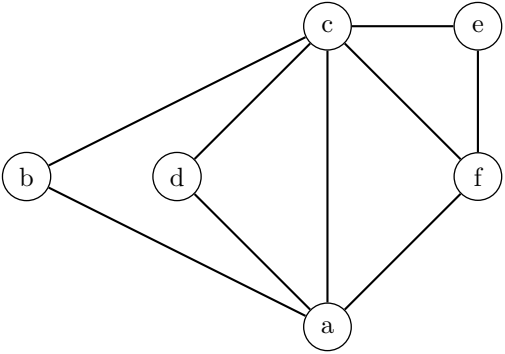
\begin{figure}[htb]
  \centering
  \begin{tikzpicture}
    \SetGraphUnit{2}
    \GraphInit[vstyle=normal]
    \Vertex{a}

    \NOWE(a){d}
    \Edge(a)(d)

    \NOEA(d){c}
    \Edge(a)(c)
    \Edge(d)(c)
    
    \WE(d){b}
    \Edge(a)(b)
    \Edge(c)(b)
    
    \NOEA(a){f}
    \Edge(a)(f)
    \Edge(c)(f)
    
    \NO(f){e}
    \Edge(f)(e)
    \Edge(c)(e)
  \end{tikzpicture}
  \caption{A (vertex-labeled) $2$-tree}
  \label{fig:ex2tree}
\end{figure}

In graph-theoretic contexts, it is conventional to label graphs on their vertices and possibly their edges.
However, for our purposes, it will be more convenient to label hedra and fronts.
Throughout, we will treat the species $\kt{k}$ of $k$-trees as a two-sort species, with $X$-labels on the hedra and $Y$-labels on their fronts; in diagrams, we will generally use capital letters for the hedron-labels and positive integers for the front-labels (see \cref{fig:exlab2tree}).
A formula for the cycle index of the species $\kt{k}$ is given in \cref{thm:ktci} using components defined recursively in \cref{thm:ctyfuncci,thm:ctxyfuncci}.
(Readers unfamiliar with the theory of species and its applications to graph enumeration may find a full exposition of the subject in \cite{bll:species}, which also will serve as a reference for any unexplained notation.)

\begin{figure}[htb]
  \centering
  \begin{tikzpicture}
    \SetGraphUnit{3}
    \GraphInit[vstyle=Hasse]
    \SetUpEdge[labelstyle={draw}]

    \Vertex{a}

    \NOWE(a){d}
    \Edge[label=2](a)(d)

    \NOEA(d){c}
    \Edge[label=4](a)(c)
    \Edge[label=5](d)(c)

    \node at (barycentric cs:a=1,d=1,c=1) {B};
    
    \WE(d){b}
    \Edge[label=1](a)(b)
    \Edge[label=6](c)(b)

    \node at (barycentric cs:b=1,d=1) {D};
    
    \NOEA(a){f}
    \Edge[label=9](a)(f)
    \Edge[label=3](c)(f)

    \node at (barycentric cs:a=1,c=1,f=1) {C};
    
    \NO(f){e}
    \Edge[label=7](f)(e)
    \Edge[label=8](c)(e)

    \node at (barycentric cs:f=1,c=1,e=1) {A};

  \end{tikzpicture}
  \caption{A (hedron-and-front--labeled) $2$-tree}
  \label{fig:exlab2tree}
\end{figure}
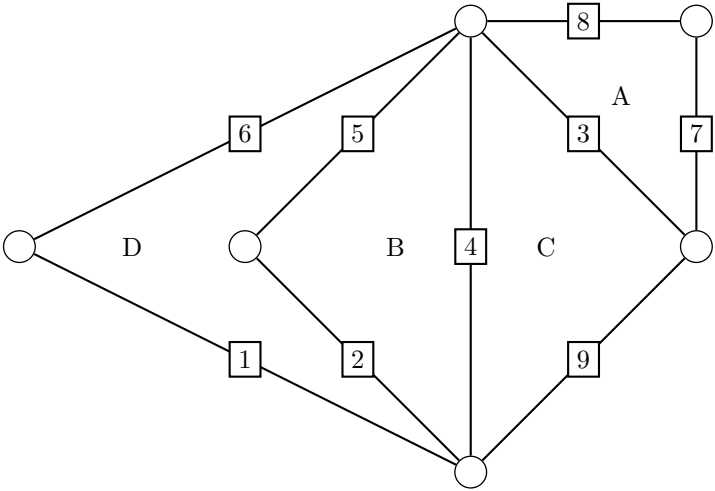

\section{The dissymmetry theorem for $k$-trees}\label{s:dissymk}
Studies of tree-like structures---especially those explicitly informed by the theory of species, as ours will be---often feature decompositions based on \emph{dissymmetry}, which allow enumerations of unrooted structures to be recharacterized in terms of rooted structures.
For example, as seen in \cite[\S 4.1]{bll:species}, the species $\mathfrak{a}$ of trees and $\specname{A} = \pointed{\mathfrak{a}}$ of rooted trees are related by the equation
\begin{equation*}
  \specname{A} + \specname{E}_{2} \pbrac{\specname{A}} = \mathfrak{a} + \specname{A}^{2}
\end{equation*}
where the proof hinges on a recursive structural decomposition of trees.
In this case, the species $\specname{A}$ is relatively easy to characterize explicitly, so this equation serves to characterize the species $\mathfrak{a}$, which would be difficult to do directly.

A similar theorem holds for $k$-trees.
\begin{theorem}
  \label{thm:dissymk}
  The species $\ktx{k}$ and $\kty{k}$ of $k$-trees rooted at hedra and fronts respectively, $\ktxy{k}$ of $k$-trees rooted at a hedron with a designated front, and $\kt{k}$ of unrooted $k$-trees are related by the equation
  \begin{equation}
    \label{eq:dissymk}
    \ktx{k} + \kty{k} = \kt{k} + \ktxy{k}
  \end{equation}
  as an isomorphism of species.
\end{theorem}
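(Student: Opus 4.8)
The plan is to reduce the statement to the classical dissymmetry theorem for trees by exhibiting a canonical tree structure underlying every $k$-tree. First I would associate to each $k$-tree its \emph{incidence graph} $I$: a bipartite graph whose nodes are the hedra and the fronts, with an edge joining a hedron $h$ to a front $f$ exactly when $f$ is one of the $k+1$ faces of $h$. The key structural lemma is that $I$ is a tree. Connectivity is immediate from the recursive construction in \cref{def:ktree}, and acyclicity I would prove by induction on the number of hedra: the base case $K_{k+1}$ has incidence graph a star, and attaching a new hedron along an existing front $f_0$ adjoins one new hedron-node together with its $k$ new face-fronts as leaves, all joined to the existing tree only through the single node $f_0$. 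Equivalently, a $k$-tree with $n$ hedra has exactly $nk+1$ fronts, so $I$ has $n + \pbrac{nk+1}$ nodes and $n\pbrac{k+1} = nk + n$ edges, one fewer than its node count.

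The incidence tree is manifestly natural: an isomorphism of $k$-trees carries hedra to hedra and fronts to fronts, hence induces a color-preserving isomorphism of incidence trees, and conversely. Under this correspondence, rooting a $k$-tree at a hedron (resp.\ a front) is the same as rooting $I$ at a hedron-node (resp.\ a front-node), so $\ktx{k} + \kty{k}$ is the species of node-rooted incidence trees; rooting at a hedron together with a designated one of its face-fronts is the same as rooting $I$ at an edge, so $\ktxy{k}$ is the species of edge-rooted incidence trees; and $\kt{k}$ is the species of unrooted incidence trees. The theorem therefore amounts to the identity (node-rooted) $=$ (unrooted) $+$ (edge-rooted) for these two-colored trees.

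To establish that identity I would build an explicit, relabeling-equivariant bijection using the graph-theoretic center of $I$, which is automorphism-invariant and is either a single node or a single edge. A node-rooting at the center node maps to the corresponding unrooted tree, and a node-rooting at any other node $v$ maps to the first edge on the unique path from $v$ toward the center; the inverse recovers the rooted node as the endpoint of the pointed edge farther from the center. The delicate point---and the step I expect to be the main obstacle---is the case in which the center is an edge $\{x,y\}$ rather than a node, for then no node-rooting is ``central'' and the naive path map sends both $x$ and $y$ to the center edge. Here the bipartite coloring rescues the argument: the center edge has exactly one hedron-endpoint and one front-endpoint, so I can canonically route the hedron-endpoint to the unrooted tree and the front-endpoint to the center edge, restoring a bijection. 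Because the center, the distances, and the hedron/front coloring are all preserved by $k$-tree isomorphisms, this bijection is natural in the labels and hence an isomorphism of species, which is exactly \cref{eq:dissymk}. (Alternatively, one may invoke the general dissymmetry theorem of \cite{bll:species} for $I$ and observe that, since every edge joins nodes of different colors, its oriented-edge term is two copies of $\ktxy{k}$ while its unoriented-edge term is one copy, so that a single $\ktxy{k}$ cancels to leave \cref{eq:dissymk}.)
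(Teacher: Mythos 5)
Your proof is correct, and its engine is the same as the paper's: a center-based bijection sending a center-rooted structure to the unrooted one, and any other rooted structure to the adjacent hedron--front pair one step along the path toward the center. The differences are in the scaffolding. The paper defines $k$-paths directly inside the $k$-tree and cites \cite{kob:ktlogspace} for the existence and uniqueness of a center \emph{clique}; you instead prove from scratch that the incidence graph $I$ is a tree (your node and edge counts are right) and run the classical center argument on $I$, which makes the proof self-contained. Your parenthetical alternative---invoking the dissymmetry theorem of \cite{bll:species} for $I$, where the directed-edge term is $2\,\ktxy{k}$ because the bipartition orients each edge canonically, and then cancelling one copy of $\ktxy{k}$---is also sound, justified exactly as the paper justifies the subtraction in \cref{cor:dissymkreform}, since $\ktxy{k}$ embeds as a subspecies. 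Two remarks. First, the case you flag as the main obstacle, a center \emph{edge}, never occurs: every hedron-node of $I$ has degree $k+1 \geq 2$, so every leaf of $I$ is a front-node; hence every longest path joins two front-nodes, has an even number of edges, and therefore has a middle \emph{node}. This is precisely the content of the paper's remark that maximal $k$-paths end at fronts together with its cited center result, so your (correct) two-colored fix is vacuous and could be replaced by this observation, after which your bijection coincides with the paper's verbatim. Second, a caution on phrasing: $\kt{k}$ is not literally ``the species of unrooted incidence trees''---for $k \geq 2$ non-isomorphic $k$-trees can have isomorphic incidence trees, which is exactly why the paper must later introduce $\specname{C}_{k+1}$-enrichment and quotient by $\symgp{k}$. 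Your argument is unaffected, because your bijection only rearranges rootings fiberwise over a fixed underlying $k$-tree, but the reduction should be stated that way rather than as an identification of species.
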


\begin{proof}
  We give a bijective, natural (\foreign{i.e.}~label-equivariant) map from $\pbrac{\ktx{k} + \kty{k}}$-structures on the left side to $\pbrac{\kt{k} + \ktxy{k}}$-structures on the right side.
  Define a \emph{$k$-path} in a $k$-tree to be a non-self-intersecting sequence of consecutively adjacent hedra and fronts, and define the \emph{length} of a $k$-path to be the total number of hedra and fronts along it.
  Note that the ends of every maximal $k$-path in a $k$-tree are fronts.
  It is easily verified, as in \cite{kob:ktlogspace}, that every $k$-tree has a unique \emph{center} clique (either a hedron or a front) which is the midpoint of every longest $k$-path (or, equivalently, has the greatest $k$-eccentricity, defined appropriately).
  
  An $\pbrac{\ktx{k} + \kty{k}}$-structure on the left-hand side of the equation is a $k$-tree $T$ rooted at some clique $c$, which is either a hedron or a front.
  Suppose that $c$ is the center of $T$.
  We then map $T$ to its unrooted equivalent in $\kt{k}$ on the right-hand side.
  This map is a natural bijection from its preimage, the set of $k$-trees rooted at their centers, to $\kt{k}$, the set of unrooted $k$-trees.

  Now suppose that the root clique $c$ of the $k$-tree $T$ is \emph{not} the center, which we denote $C$.
  Identify the clique $c'$ which is adjacent to $c$ along the $k$-path from $c$ to $C$.
  We then map the $k$-tree $T$ rooted at the clique $c$ to the same tree $T$ rooted at \emph{both} $c$ and its neighbor $c'$.
  This map is also a natural bijection, in this case from the set of $k$-trees rooted at vertices which are \emph{not} their centers to the set $\ktxy{k}$ of $k$-trees rooted at an adjacent hedron-front pair.

  Since these maps are label-equivariant bijections, they induce an isomorphism of species
  \begin{equation*}
    \ktx{k} + \kty{k} = \kt{k} + \ktxy{k}
  \end{equation*}
  as desired, completing the proof.
\end{proof}

In general we will reformulate the dissymmetry theorem as follows:
\begin{corollary}
  \label{cor:dissymkreform}
  For the various forms of the species $\kt{k}$ as above, we have
  \begin{equation}
    \label{eq:dissymkreform}
    \kt{k} = \ktx{k} + \kty{k} - \ktxy{k}.
  \end{equation}
  as an isomorphism of species.
\end{corollary}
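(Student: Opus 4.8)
The plan is to obtain \cref{eq:dissymkreform} directly from \cref{thm:dissymk} by algebraic rearrangement; the only genuine content is justifying that the rearrangement makes sense. Beginning from the species isomorphism
\begin{equation*}
  \ktx{k} + \kty{k} = \kt{k} + \ktxy{k}
\end{equation*}
established in \cref{thm:dissymk}, I would move $\ktxy{k}$ to the other side to arrive at $\kt{k} = \ktx{k} + \kty{k} - \ktxy{k}$. The entire subtlety lies in the fact that this subtraction is not, \foreign{a priori}, a well-defined operation on species.

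Indeed, under sum and product the species form only a semiring, not a ring, so there is in general no species ``$F - G$''. The standard remedy is to pass to the ring of \emph{virtual species}, obtained by group-completing the additive monoid of species (as developed in \cite{bll:species}): every isomorphism of ordinary species descends to an equality there, additive inverses exist by construction, and \cref{thm:dissymk} becomes an identity to which adding $-\ktxy{k}$ on both sides immediately yields \cref{eq:dissymkreform}. Interpreted in this ring, the corollary is therefore just a restatement of the theorem.

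Alternatively---and this is the reading we will actually exploit in the sequel, where cycle indices are computed in \cref{thm:ktci} and its companions---I would pass to generating-function invariants. Writing $Z_{F}$ for the cycle index of a species $F$, additivity over sums gives $Z_{\ktx{k}} + Z_{\kty{k}} = Z_{\kt{k}} + Z_{\ktxy{k}}$ from \cref{thm:dissymk}; since these live in a genuine ring of symmetric functions, subtraction is unproblematic and we recover $Z_{\kt{k}} = Z_{\ktx{k}} + Z_{\kty{k}} - Z_{\ktxy{k}}$, with the identical remark applying to the type and exponential generating functions. There is essentially no combinatorial obstacle to surmount; the only thing worth recording carefully is this choice of ambient ring, which is more a matter of convention than of proof.
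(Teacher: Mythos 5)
Your proposal is correct and takes essentially the same approach as the paper: \cref{eq:dissymkreform} is obtained by rearranging \cref{thm:dissymk}, with the subtraction made rigorous by passing to virtual species, which is precisely one of the two formalizations the paper cites. The paper additionally records the more combinatorial justification---that $\ktxy{k}$ embeds in $\ktx{k} + \kty{k}$ via the centering map, so the difference is an honest species identifying each unrooted $k$-tree with itself rooted at its center---and note that your cycle-index alternative by itself establishes only the enumerative consequence, not the claimed isomorphism of species.
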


This species subtraction is well-defined in the sense that since the species $\ktxy{k}$ embeds in the species $\ktx{k} + \kty{k}$ by the centering map described in the proof of \cref{thm:dissymk}.
Essentially, \cref{eq:dissymkreform} identifies each unrooted $k$-tree with itself rooted at its center simplex.
This may be understood formally either in the sense of virtual species as in \cite[\S 2.5]{bll:species} or in the sense of species maps as in \cite[Def.~1.3.3]{agd:thesis}.

\Cref{thm:dissymk} and the consequent \cref{eq:dissymkreform} allow us to reframe enumerative questions about generic $k$-trees in terms of questions about $k$-trees rooted in various ways.
However, the rich internal symmetries of large cliques obstruct direct analysis of these rooted structures.
We need to break these symmetries to proceed.

\section{Coherently-oriented $k$-trees}
\subsection{Symmetry-breaking}\label{ss:symbreak}
In the case of the species $\specname{A} = \pointed{\kt{1}}$ of rooted trees, we may obtain a simple recursive functional equation \cite[\S 1, eq.~(9)]{bll:species}:
\begin{equation}
  \label{eq:rtrees}
  \specname{A} = X \cdot \specname{E} \pbrac{\specname{A}}.
\end{equation}
This completely characterizes the combinatorial structure of the class of trees.

However, in the more general case of $k$-trees, no such simple relationship obtains; attached to a given hedron is a collection of sets of hedra (one such set per front), but simply specifying which fronts to attach to which does not fully specify the attachings, and the structure of that collection of sets is complex.
We will break this symmetry by adding additional structure which we can later remove using the theory of quotient species.

\begin{definition}
  \label{def:mirrorfronts}
  Let $h_{1}$ and $h_{2}$ be two hedra joined at a front $f$, hereafter said to be \emph{adjacent}.
  Each other front of one of the hedra shares $k-1$ vertices with $f$; we say that two fronts $f_{1}$ of $h_{1}$ and $f_{2}$ of $h_{2}$ are \emph{mirror with respect to $f$} if these shared vertices are the same, or equivalently if $f_{1} \cap f = f_{2} \cap f$.
\end{definition}

\begin{observation}
  \label{obs:mirrorfronts}
  Let $T$ be a $k$-tree with two hedra $h_{1}$ and $h_{2}$ joined at a front $f$.
  Then there is exactly one front of $h_{2}$ mirror to each front of $h_{1}$ with respect to their shared front $f$.
\end{observation}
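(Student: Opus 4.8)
The plan is to reduce the claim to a purely local, set-theoretic computation: the global $k$-tree structure of $T$ plays no role, since the mirror relation of \cref{def:mirrorfronts} depends only on the two hedra $h_1$, $h_2$ and their shared front $f$. First I would fix notation for the vertex sets. Since $f$ is a $k$-clique contained in the $(k+1)$-clique $h_1$, and likewise in $h_2$, I can write $h_1 = f \cup \{v_1\}$ and $h_2 = f \cup \{v_2\}$ for vertices $v_1, v_2 \notin f$ (necessarily distinct, as $h_1 \neq h_2$, though this will not actually be needed).

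Next I would enumerate the fronts of each hedron. Because $h_1$ is a complete graph on $k+1$ vertices, its fronts — the $k$-cliques it contains — are exactly its $k+1$ many $k$-subsets, each obtained by deleting a single vertex. Deleting $v_1$ recovers $f$ itself; the $k$ \emph{other} fronts are the sets $f_1^{(w)} := (f \setminus \{w\}) \cup \{v_1\}$ indexed by the vertices $w \in f$. The key computation is the intersection with $f$: since $v_1 \notin f$, we have $f_1^{(w)} \cap f = f \setminus \{w\}$, so the assignment $w \mapsto f_1^{(w)}$ is a bijection from the vertices of $f$ onto the other fronts of $h_1$, and the trace $f_1^{(w)} \cap f$ records exactly which vertex $w$ was deleted. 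The identical analysis applied to $h_2$ yields other fronts $f_2^{(w)} = (f \setminus \{w\}) \cup \{v_2\}$ with $f_2^{(w)} \cap f = f \setminus \{w\}$.

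With both families indexed by the vertices of $f$, the mirror condition becomes transparent. An other front $f_1^{(w)}$ of $h_1$ and an other front $f_2^{(w')}$ of $h_2$ are mirror with respect to $f$ precisely when $f_1^{(w)} \cap f = f_2^{(w')} \cap f$, that is, when $f \setminus \{w\} = f \setminus \{w'\}$, which holds if and only if $w = w'$. Hence each other front $f_1^{(w)}$ of $h_1$ admits exactly one mirror, namely $f_2^{(w)}$, establishing both existence and uniqueness.

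I do not anticipate a genuine obstacle: the statement is essentially a counting identity for the facets of two simplices glued along a common facet. The only point requiring care is the claim that each front is recovered uniquely from its $(k-1)$-element trace on $f$, which is exactly the content of the bijection $w \mapsto f_i^{(w)}$ and rests on the fact that $v_i \notin f$.
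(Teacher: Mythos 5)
Your proof is correct, and it is exactly the routine verification the paper has in mind: the paper states this as an \emph{observation} with no proof at all, treating it as immediate from \cref{def:mirrorfronts}. Your local computation --- writing $h_i = f \cup \{v_i\}$, identifying the other fronts of $h_i$ with the vertices $w \in f$ via $w \mapsto (f \setminus \{w\}) \cup \{v_i\}$, and noting that the trace on $f$ determines $w$ --- is the natural way to fill in that omitted argument, and it correctly isolates the one point of substance (that a front is recovered from its $(k-1)$-element intersection with $f$ because $v_i \notin f$).
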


\begin{definition}
  \label{def:coktree}
  Define an \emph{orientation} of a hedron to be a cyclic ordering of the set of its fronts and an \emph{orientation} of a $k$-tree to be a choice of orientation for each of its hedra.
  If two oriented hedra share a front, their orientations are \emph{compatible} if they correspond under the mirror bijection.
  Then an orientation of a $k$-tree is \emph{coherent} if every pair of adjacent hedra is compatibly-oriented.
\end{definition}
See \cref{fig:exco2tree} for an example.
Note that every $k$-tree admits many coherent orientations---any one hedron of the $k$-tree may be oriented freely, and a unique orientation of the whole $k$-tree will result from each choice of such an orientation of one hedron.
We will denote by $\ktco{k}$ the species of coherently-oriented $k$-trees.

By shifting from the general $k$-tree setting to that of coherently-oriented $k$-trees, we break the symmetry described above.
If we can now establish a group action on $\ktco{k}$ whose orbits are generic $k$-trees we can use the theory of quotient species to extract the generic species $\kt{k}$.
First, however, we describe an encoding procedure which will make future work more convenient.

\begin{figure}[htb]
  \centering
  \begin{tikzpicture}
    \SetGraphUnit{3}
    \GraphInit[vstyle=Hasse]
    \SetUpEdge[labelstyle={draw}]

    \Vertex{a}

    \NOWE(a){d}
    \Edge[label=2](a)(d)

    \NOEA(d){c}
    \Edge[label=4](a)(c)
    \Edge[label=5](d)(c)

    \coordinate (B) at (barycentric cs:a=1,d=1.25,c=1);
    \node at (B) {B};
    \cyccwr[.5cm]{(B)};
    
    \WE(d){b}
    \Edge[label=1](a)(b)
    \Edge[label=6](c)(b)

    \coordinate (D) at (barycentric cs:b=1,d=1.25);
    \node at (D) {D};
    \cyccwr[.5cm]{(D)};
    
    \NOEA(a){f}
    \Edge[label=9](a)(f)
    \Edge[label=3](c)(f)

    \coordinate (C) at (barycentric cs:a=1,c=1,f=1.25);
    \node at (C) {C};
    \cycccwl[.5cm]{(C)};
    
    \NO(f){e}
    \Edge[label=7](f)(e)
    \Edge[label=8](c)(e)

    \coordinate (A) at (barycentric cs:f=1,c=1,e=1.75);
    \node at (A) {A};
    \cyccwl[.5cm]{(A)};
    
  \end{tikzpicture}
  \caption{A coherently-oriented $2$-tree}
  \label{fig:exco2tree}
\end{figure}
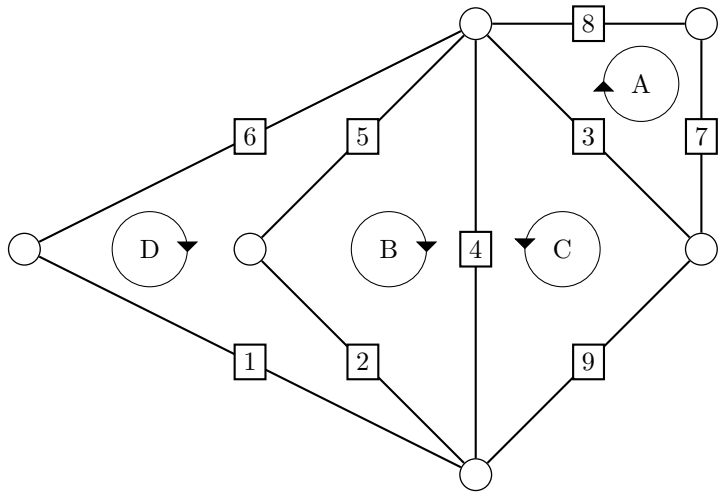

\subsection{Bicolored tree encoding}\label{ss:bctree}
Although $k$-trees are graphs (and hence made up simply of edges and vertices), their structure is more conveniently described in terms of their simplicial structure of hedra and fronts.
Indeed, if each hedron has an orientation of its faces and we choose in advance which hedra to attach to which by what fronts, the requirement that the resulting $k$-tree be coherently oriented is strong enough to characterize the attaching completely.
We thus pass from coherently-oriented $k$-trees to a surrogate structure which exposes the salient features of this attaching structure more clearly---structured bicolored trees in the spirit of the $R, S$-enriched bicolored trees of \cite[\S 3.2]{bll:species}.

A $\pbrac{\specname{C}_{k+1}, \specname{E}}$-enriched bicolored tree is a bicolored tree each black vertex of which carries a $\specname{C}_{k+1}$-structure (that is, a cyclic ordering on $k+1$ elements) on its white neighbors.
(The $\specname{E}$-structure on the black neighbors of each white vertex is already implicit in the bicolored tree itself.)
For later convenience, we will sometimes call such objects \emph{$k$-coding trees}, and we will denote by $\ct{k}$ the species of such $k$-coding trees.

We now define a map $\beta: \ktco{k} \sbrac{n} \to \ct{k} \sbrac{n}$.
For a given coherently-oriented $k$-tree $T$ with $n$ hedra:
\begin{itemize}
  \item For every hedron of $T$ construct a black vertex and for every front a white vertex, assigning labels appropriately.
  \item For every black-white vertex pair, construct a connecting edge if the white vertex represents a front of the hedron represented by the black vertex.
  \item Finally, enrich the collection of neighbors of each black vertex with a $\specname{C}_{k+1}$-structure inherited directly from the orientation of the $k$-tree $T$.
\end{itemize}
The resulting object $\beta \pbrac{T}$ is clearly a $k$-coding tree with $n$ black vertices.

We can recover a $T$ from $\beta \pbrac{T}$ by following the reverse procedure.
For an example, see \cref{fig:exbctree}, which shows the $2$-coding tree associated to the coherently-oriented $2$-tree of \cref{fig:exco2tree}.
Note that, for clarity, we have rendered the black vertices (corresponding to hedra) with squares.

\begin{figure}[htb]
  \centering
  \begin{tikzpicture}
    \SetGraphUnit{1.5}
    \GraphInit[vstyle=normal]

    \Vertex[style=ynode]{4}

    \SOWE[style=xnode](4){B}
    \cyccwr{(B)};
    \Edge(4)(B)

    \WE[style=ynode](B){5}
    \Edge(B)(5)

    \SOEA[style=ynode](B){2}
    \Edge(B)(2)

    \NOWE[style=xnode](4){D}
    \cyccwr{(D)};
    \Edge(4)(D)
    
    \WE[style=ynode](D){1}
    \Edge(D)(1)

    \NOEA[style=ynode](D){6}
    \Edge(D)(6)

    \EA[style=xnode](4){C}
    \cycccwr{(C)};
    \Edge(4)(C)

    \SOEA[style=ynode](C){9}
    \Edge(C)(9)

    \NOEA[style=ynode](C){3}
    \Edge(C)(3)

    \NO[style=xnode](3){A}
    \cyccwl{(A)};
    \Edge(3)(A)

    \NOWE[style=ynode](A){8}
    \Edge(A)(8)

    \NOEA[style=ynode](A){7}
    \Edge(A)(7)
    
  \end{tikzpicture}
  \caption{A $\pbrac{\specname{C}_{k+1}, \specname{E}}$-enriched bicolored tree encoding a coherently-oriented $2$-tree}
  \label{fig:exbctree}
\end{figure}

\begin{theorem}\label{thm:bctreeenc}
  The map $\beta$ induces an isomorphism of species $\ktco{k} \simeq \ct{k}$.
\end{theorem}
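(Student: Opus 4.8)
The plan is to exhibit $\beta$, together with an explicit reverse construction, as a family of bijections $\beta\colon \ktco{k}\sbrac{n} \to \ct{k}\sbrac{n}$ that is natural in the hedron- and front-labels; this is exactly what an isomorphism of (two-sort) species demands. Naturality will be essentially immediate, since $\beta$ is defined purely in terms of the incidence of hedra and fronts and the inherited orientation data, carrying each $X$-label and each $Y$-label across unchanged, so it manifestly commutes with any relabeling. The real content is therefore to verify that each $\beta_{n}$ is a bijection, which I would do by confirming that $\beta\pbrac{T}$ is always a genuine $k$-coding tree and that the sketched reverse procedure is well-defined and two-sided inverse to $\beta$.

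First I would check that $\beta\pbrac{T}$ really is a $\pbrac{\specname{C}_{k+1}, \specname{E}}$-enriched bicolored tree. The bicoloring and the $\specname{C}_{k+1}$-enrichment are built in: each black vertex comes from a hedron, whose $k+1$ fronts furnish exactly $k+1$ white neighbors carrying the inherited cyclic order. The one nontrivial point is that the incidence graph is a tree. Connectivity is inherited from connectivity of $T$, and acyclicity follows from the loop-free gluing structure of $k$-trees (a cycle of hedra and fronts is precisely the kind of loop the recursion of \cref{def:ktree} forbids); equivalently, one can count, since a $k$-tree with $n$ hedra has $nk+1$ fronts and $n\pbrac{k+1}$ hedron--front incidences, giving an incidence graph with $n + \pbrac{nk+1}$ vertices and $n\pbrac{k+1}$ edges, so that connectivity forces a tree.

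The heart of the proof is well-definedness of the inverse, namely reconstructing from a $k$-coding tree $B$ a \emph{unique} coherently-oriented $k$-tree. I would build the $k$-tree by induction outward from a chosen root black vertex: each black vertex becomes an abstract $\pbrac{k+1}$-clique whose $k+1$ fronts are its white neighbors (with each vertex identified as the opposite front), and each tree-edge prescribes a front along which a child hedron is glued to its parent. The key claim—and where I expect the genuine work—is that the prescribed cyclic orderings together with coherence force each gluing uniquely: the cyclic order on the child's fronts must correspond, under the mirror pairing of \cref{obs:mirrorfronts} between the non-shared fronts of parent and child, to the cyclic order on the parent's fronts, and since this pairing matches the fronts omitting each shared vertex, it pins down which child vertex is identified with which vertex of the shared front, leaving no freedom. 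I would then dispatch the one real consistency question: when a single front is shared by many hedra, the gluings forced pairwise against a common parent must be mutually compatible. This follows because the mirror relation is transitive through a common front—two fronts are mirror exactly when they omit the same shared vertex—so compatibility of each child with the parent automatically yields compatibility among the children, and the reconstructed orientation is globally coherent.

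Finally I would confirm that $\beta$ and this reconstruction are mutually inverse. Applying $\beta$ to a reconstructed $k$-tree returns $B$, since its black and white vertices, incidences, and cyclic orderings are recovered verbatim by construction; conversely, reconstructing from $\beta\pbrac{T}$ returns $T$, because the vertex identifications forced above coincide with the actual identifications already present in $T$, precisely because $T$ was coherently oriented to begin with. As every map in sight preserves the hedron- and front-labels, the resulting bijections are natural, and \cref{thm:bctreeenc} follows.
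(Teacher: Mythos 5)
Your proof is correct, but it takes a genuinely different route from the paper's. The paper treats naturality and injectivity as clear (having already described the reverse procedure informally before the theorem) and devotes the proof to surjectivity, arguing by induction on the number of black vertices: given a coding tree $C$ with $n+1$ black vertices, it forms two sub-coding-trees $C_1$ and $C_2$, each obtained by deleting a black vertex all but one of whose white neighbors are leaves, invokes the inductive hypothesis (``contact of order $n$'') to obtain preimages $T_1$ and $T_2$, notes that $T_1 \cap T_2$ has $n-1$ hedra, and produces the preimage of $C$ as the amalgam $T = T_1 \cup T_2$. You instead construct the inverse map explicitly and verify it is a two-sided inverse, which gives injectivity and surjectivity at once. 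The substance of your argument---that coherence forces each gluing uniquely, since an isomorphism of cyclic orders on $k+1$ fronts is pinned down by requiring the shared front to map to itself, and that gluings of several hedra at a common front are mutually consistent because the mirror relation of \cref{obs:mirrorfronts} is transitive through that front---is exactly the rigidity that the paper leaves implicit, both in its earlier remark that a coherent orientation of a $k$-tree is determined by orienting a single hedron and in its unexamined claim that $T_1 \cup T_2$ is a well-defined coherently-oriented $k$-tree. Your version is longer but more self-contained: it justifies that $\beta(T)$ is really a tree (the vertex/incidence count) and that the reverse procedure is well-defined, whereas the paper's induction is shorter but asks the reader to supply precisely the local forcing and amalgamation details you spell out.
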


\begin{proof}
  It is clear that $\beta$ sends each coherently-oriented $k$-tree to a unique $k$-coding tree, and that this map commutes with permutations on the label sets (and thus is categorically natural).
  To show that $\beta$ induces a species isomorphism, then, we need only show that $\beta$ is a surjection onto $\ct{k} \sbrac{n}$ for each $n$.
  Throughout, we will say `$F$ and $G$ have contact of order $n$' when the restrictions $F_{\leq n}$ and $G_{\leq n}$ of the species $F$ and $G$ to label sets of cardinality at most $n$ are naturally isomorphic.
  
  First, we note that there are exactly $k!$ coherently-oriented $k$-trees with one hedron---one for each cyclic ordering of the $k+1$ front labels.
  There are also $k!$ coding trees with one black vertex, and the encoding $\beta$ is clearly a natural bijection between these two sets.
  Thus, the species $\ktco{k}$ of coherently-oriented $k$-trees and $\ct{k}$ of $k$-coding trees have contact of order $1$. 

  Now, by way of induction, suppose $\ktco{k}$ and $\ct{k}$ have contact of order $n \geq 1$.
  Let $C$ be a $k$-coding tree with $n+1$ black vertices.
  Then let $C_{1}$ and $C_{2}$ be two distinct sub-$k$-coding trees of $C$, each obtained from $C$ by removing one black node which has only one white neighbor which is not a leaf.
  Then, by hypothesis, there exist coherently-oriented $k$-trees $T_{1}$ and $T_{2}$ with $n$ hedra such that $\beta \pbrac{T_{1}} = C_{1}$ and $\beta \pbrac{T_{2}} = C_{2}$.
  Moreover, $\beta \pbrac{T_{1} \cap T_{2}} = \beta \pbrac{T_{1}} \cap \beta \pbrac{T_{2}}$, and this $k$-coding tree has $n-1$ black vertices, so $T_{1} \cap T_{2}$ has $n-1$ hedra.
  Thus, $T = T_{1} \cup T_{2}$ is a coherently-oriented $k$-tree with $n+1$ black hedra, and $\beta \pbrac{T} = C$ as desired.
  Thus, $\beta^{-1} \pbrac{\beta \pbrac{T_{1}} \cup \beta \pbrac{T_{2}}} = T_{1} \cup T_{2} = T$, and hence $\ktco{k}$ and $\ct{k}$ have contact of order $n+1$.
\end{proof}

Thus, $\ktco{k}$ and $\ct{k}$ are isomorphic as species; however, $k$-coding trees are much simpler than coherently-oriented $k$-trees as graphs.
Moreover, $k$-coding trees are doubly-enriched bicolored trees as in \cite[\S 3.2]{bll:species}, for which the authors of that text develop a system of functional equations which fully characterizes the cycle index of such a species.
We thus will proceed in the following sections with a study of the species $\ct{k}$, then lift our results to the $k$-tree context.

\subsection{Functional decomposition of $k$-coding trees}\label{ss:codecomp}
With the encoding $\beta: \ktco{k} \to \ct{k}$, we now have direct graph-theoretic access to the attaching structure of coherently-oriented $k$-trees.
We therefore turn our attention to the $k$-coding trees themselves to produce a recursive decomposition.
As with $k$-trees, we will study rooted versions of the species $\ct{k}$ of $k$-coding trees first, then use dissymmetry to apply the results to unrooted enumeration.

\begin{theorem}
  \label{thm:funcdecompct}
  The species $\ctx{k}$ of $X$-rooted $k$-coding trees, $\cty{k}$ of $Y$-rooted $k$-coding trees, and $\ctxy{k}$ of edge-rooted $k$-coding trees satisfy the functional equations
  \begin{subequations}
    \label{eq:ctfunc}
    \begin{align}
      \ctx{k} &= X \cdot \specname{C}_{k+1} \pbrac[\big]{\cty{k}} \label{eq:ctxfunc} \\
      \cty{k} &= Y \cdot \specname{E} \pbrac*{X \cdot \specname{L}_{k} \pbrac[\big]{\cty{k}}} \label{eq:ctyfunc} \\
      \ctxy{k} &= \cty{k} \cdot X \cdot \specname{L}_{k} \pbrac[\big]{\cty{k}} = X \cdot \specname{L}_{k+1} \pbrac[\big]{\cty{k}} \label{eq:ctxyfunc}
    \end{align}
  \end{subequations}
  as isomorphisms of species.
\end{theorem}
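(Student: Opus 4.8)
The plan is to prove each of the three identities as a natural (label-equivariant) isomorphism of species by directly exhibiting the recursive decomposition of the corresponding rooted $k$-coding trees. Recall that in a $k$-coding tree each black vertex (a hedron) carries a $\specname{C}_{k+1}$-structure on its exactly $k+1$ white neighbors (its fronts), while each white vertex (a front) carries only the unordered---hence implicitly $\specname{E}$-structured---set of its black neighbors. Because every decomposition below is described purely in terms of adjacency and the inherited cyclic orders, never referring to the particular labels, naturality will be automatic; the real content lies in checking that each decomposition is a bijection for every finite label set, which one does by exhibiting the evident inverse that reassembles the tree.

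For $\ctx{k} = X \cdot \specname{C}_{k+1}\pbrac{\cty{k}}$ I would root at a black vertex $b$, contributing the factor $X$ for its hedron-label. Its $k+1$ white neighbors inherit the $\specname{C}_{k+1}$-structure of $b$, and each such neighbor $w$ together with the entire subtree hanging below it away from $b$ is precisely a $Y$-rooted $k$-coding tree; assembling these yields the factor $\specname{C}_{k+1}\pbrac{\cty{k}}$. For $\cty{k} = Y \cdot \specname{E}\pbrac{X \cdot \specname{L}_{k}\pbrac{\cty{k}}}$ I would root at a white vertex $w$, contributing $Y$, and note that the black neighbors of $w$ form an unordered set, producing the outer $\specname{E}$. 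The crucial step is the analysis of a single black neighbor $b$: its $k+1$ fronts are cyclically ordered and one of them is $w$ itself, so marking $w$ and cutting the cycle there linearizes the remaining $k$ fronts; each of those roots a $Y$-rooted subtree (obtained by deleting $b$), so $b$ contributes $X \cdot \specname{L}_{k}\pbrac{\cty{k}}$.

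For the edge-rooted species I would give two compatible decompositions of a $k$-coding tree rooted at an incident hedron-front pair $\pbrac{b,w}$. Deleting the edge $bw$ splits the tree into the component containing $w$, which is a $\cty{k}$-structure, and the component containing $b$; on the $b$-side the front $w$ is now a distinguished direction, so the remaining $k$ fronts of $b$ linearize exactly as above, giving $\cty{k} \cdot X \cdot \specname{L}_{k}\pbrac{\cty{k}}$. Alternatively, viewing everything from $b$ and pointing its $\specname{C}_{k+1}$-structure at the marked front $w$ converts the cyclic order on all $k+1$ fronts into a linear one, so that $b$ together with its $k+1$ front-subtrees (the $w$-subtree first) gives $X \cdot \specname{L}_{k+1}\pbrac{\cty{k}}$. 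These two forms agree because $\specname{L}_{k+1}\pbrac{\cty{k}} = \cty{k} \cdot \specname{L}_{k}\pbrac{\cty{k}}$---a linear order of $k+1$ objects is its first object followed by a linear order of the remaining $k$---and $X$ commutes past the leading $\cty{k}$ factor.

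The main obstacle, recurring in all three arguments, is justifying the passage from a cyclic order to a linear one. When a black vertex is reached through a distinguished white neighbor, one must verify carefully that ``cutting'' its $\specname{C}_{k+1}$-structure at that neighbor is a well-defined natural bijection onto the $\specname{L}_{k}$-structures on the remaining $k$ fronts (and, for the second form of the edge-rooted equation, that pointing yields $\specname{L}_{k+1}$-structures on all $k+1$ fronts). Once this cyclic-to-linear correspondence is established, the recursive unwinding of the tree on each side of every equation matches term by term, and since every construction is manifestly independent of the labels, each is an isomorphism of species as claimed.
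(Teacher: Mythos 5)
Your proposal is correct and follows essentially the same decomposition as the paper: root at the black/white vertex (or edge), read off the $\specname{C}_{k+1}$- or $\specname{E}$-structure, and use the distinguished parent front to cut each non-root black vertex's cyclic order into a linear order (the paper phrases this as $\specname{L}_{k} = \deriv{\specname{C}}_{k+1}$), with the same two equivalent forms for $\ctxy{k}$. The only difference is cosmetic---you place the root $\cty{k}$-factor first in the $\specname{L}_{k+1}$-structure where the paper places it last.
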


\begin{proof}
  By construction, a $\ctx{k}$-structure consists of a single $X$-label and a cyclically-ordered $\pbrac{k+1}$-set of $\cty{k}$-structures.
  This gives \cref{eq:ctxfunc}.
  See \cref{fig:ctxconst} for an example of this construction.

  \begin{figure}[htb]
    \centering
    \def\kval{4}
    \begin{tikzpicture}
      \pgfmathparse{int(1+\kval)} 
      \let\numfronts\pgfmathresult

      \pgfmathparse{180/\numfronts}
      \let\childshift\pgfmathresult

      \node [style=xnode] (root) at (0,0) {$X$};   
      \cycccwl{(root)};

      \draw (180/\numfronts:1) node {$\specname{C}_{\numfronts}$};

      \foreach \i in {0, ..., \kval} {
        \pgfmathparse{360*\i/\numfronts}
        \let\theta\pgfmathresult     
        
        \node [style=ynode] (child\i) at (\theta:3) {$Y$};
        \path [draw] (root) -- (child\i);
        \draw (child\i) ++(\theta+90:1) node {$\cty{\kval}$};

        \draw (child\i) ++(\theta:1cm) ++(180+\theta-\childshift:2cm) arc (180+\theta-\childshift:180+\theta+\childshift:2cm);

        \path [draw] (child\i) -- ++(\theta:2) node [rotate=\theta,fill=white] {$\cdots$};
        \path [draw] (child\i) -- ++(\theta+\childshift:2);
        \path [draw] (child\i) -- ++(\theta-\childshift:2);
      }
      
    \end{tikzpicture}
    \caption[An example $X$-rooted $k$-coding tree]{An example $\ctx{\kval}$-structure, rooted at the $X$-vertex.}
    \label{fig:ctxconst}
  \end{figure}

  Similarly, a $\cty{k}$-structure consists of a single $Y$-label and a (possibly empty) set of structures which are a slight variant of the $\ctx{k}$-structures discussed above.
  Every white neighbor of the black root of a $\ctx{k}$-structure is labeled in the construction above, but the white parent of a $\ctx{k}$-structure in this recursive decomposition is already labeled.
  Thus, the structure around a black vertex which is a child of a white vertex consists of an $X$ label and a linearly-ordered $k$-set of $\cty{k}$-structures.
  Thus, a $\cty{k}$-structure consists of a $Y$-label and a set of pairs of an $X$ label and an $\specname{L}_{k}$-structure of $\cty{k}$-structures.
  This gives \cref{eq:ctyfunc}.
  We note here for conceptual consistency that in fact $\specname{L}_{k} = \deriv{\specname{C}}_{k+1}$ for $\specname{L}$ the species of linear orders and $\specname{C}$ the species of cyclic orders and that $\deriv{\specname{E}} = \specname{E}$ for $\specname{E}$ the species of sets; readers familiar with the $R, S$-enriched bicolored trees of \cite[\S 3.2]{bll:species} will recognize echoes of their decomposition in these facts.

  Finally, a $\ctxy{k}$-structure is simply an $X \cdot \specname{L}_{k} \pbrac[\big]{\cty{k}}$-structure as described above (corresponding to the black vertex) together with a $\cty{k}$-structure (corresponding to the white vertex).
  For reasons that will become clear later, we note that we can incorporate the root white vertex into the linear order by making it last, thus representing a $\ctxy{k}$-structure instead as an $X \cdot \specname{L}_{k+1} \pbrac[\big]{\cty{k}}$-structure.
  This gives \cref{eq:ctxyfunc}.
  See \cref{fig:ctxyconst} for an example of this construction.
  \qedhere

  \begin{figure}[htb]
    \centering
    \def\kval{4}
    \begin{tikzpicture}
      \pgfmathparse{int(1+\kval)} 
      \let\numfronts\pgfmathresult

      \pgfmathparse{180/\numfronts}
      \let\childshift\pgfmathresult

      \node [style=xnode] (root) at (0,0) {$X$};
      \draw[->,postaction={decorate},decoration={markings, mark = at position 1 with {\arrow{triangle 90}}}] (root) ++(2*\childshift:1cm) arc (2*\childshift:360:1cm); 

      \draw (root) -- ++(0:3) [ultra thick] node [fill=white] {};

      \draw (180/\numfronts:1) node {$\specname{L}_{\numfronts}$};

      \foreach \i in {0, ..., \kval} {
        \pgfmathparse{360*\i/\numfronts}
        \let\theta\pgfmathresult     
        
        \node [style=ynode] (child\i) at (\theta:3) {$Y$};
        \path [draw] (root) -- (child\i);
        \draw (child\i) ++(\theta+90:1) node {$\cty{\kval}$};

        \draw (child\i) ++(\theta:1cm) ++(180+\theta-\childshift:2cm) arc (180+\theta-\childshift:180+\theta+\childshift:2cm);

        \path [draw] (child\i) -- ++(\theta:2) node [rotate=\theta,fill=white] {$\cdots$};
        \path [draw] (child\i) -- ++(\theta+\childshift:2);
        \path [draw] (child\i) -- ++(\theta-\childshift:2);
      }
      
    \end{tikzpicture}
    \caption[An example $XY$-rooted $k$-coding tree]{An example $\ctxy{\kval}$-structure, rooted at the $X$-vertex and the thick edge adjoining it.}
    \label{fig:ctxyconst}
  \end{figure}
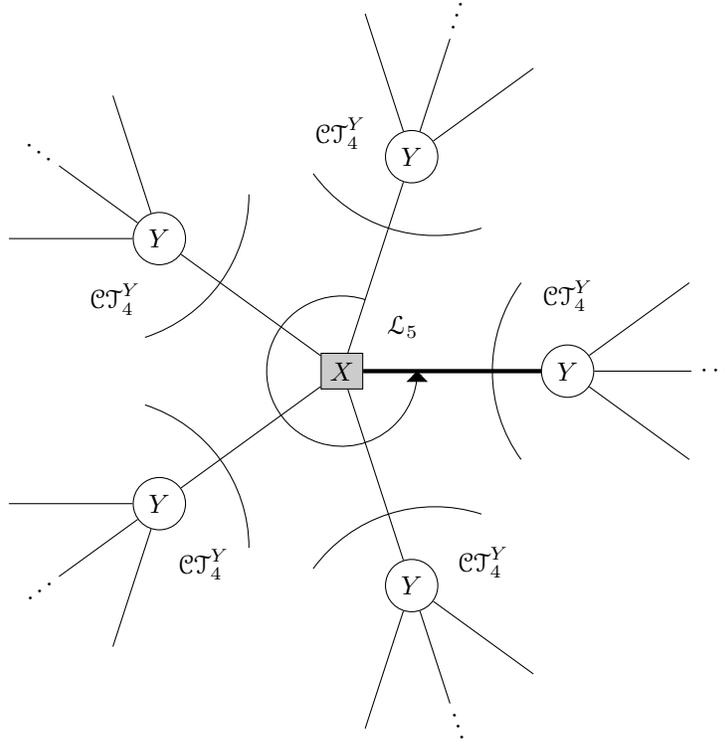
\end{proof}

However, a recursive characterization of the various species of $k$-coding trees is insufficient to characterize the species of $k$-trees itself, since $k$-coding trees represent $k$-trees with coherent orientations.

\section{Generic $k$-trees}\label{s:genkt}
In \cite{gessel:spec2trees}, the orientation-reversing action of $\symgp{2}$ on $\cyc_{\sbrac{3}}$ is exploited to study $2$-trees species-theoretically.
We might hope to develop an analogous group action under which general $k$-trees are naturally identified with orbits of coherently-oriented $k$-trees under an action of $\symgp{k}$.
Unfortunately:
\begin{proposition}
  \label{prop:notransac}
  For $k \geq 3$, no transitive action of any group on the set $\cyc_{\sbrac{k+1}}$ of cyclic orders on $\sbrac{k+1}$ commutes with the action of $\symgp{k+1}$ that permutes labels.
\end{proposition}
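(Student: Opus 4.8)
The plan is to translate the statement into a question about centralizers in a symmetric group and then reduce it to the normality of a single cyclic subgroup. First I would observe that an action of a group $G$ on $\cyc_{\sbrac{k+1}}$ commuting with the label-permuting $\symgp{k+1}$-action is precisely a homomorphism $\rho$ from $G$ into the centralizer $Z$ of the image of $\symgp{k+1}$ inside $\mathrm{Sym}\pbrac{\cyc_{\sbrac{k+1}}}$. Since the $G$-action on $\cyc_{\sbrac{k+1}}$ is carried out through $\rho$, its orbits coincide with those of $\rho(G)$, so $G$ is transitive iff $\rho(G)$ is; and as $\rho(G) \subseteq Z$, a transitive $G$ would force $Z$ itself to act transitively. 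Hence it suffices to prove that $Z$ is \emph{not} transitive on $\cyc_{\sbrac{k+1}}$ once $k \geq 3$, which disposes of all candidate groups $G$ simultaneously.

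Next I would pin down the $\symgp{k+1}$-action itself. It is transitive, since any cyclic order can be relabeled into any other, and the stabilizer of a fixed cyclic order $\omega_0$ is exactly the order-$(k+1)$ cyclic group $K = \langle \sigma \rangle$ generated by the rotation $\sigma = (1\,2\,\cdots\,(k+1))$; this is consistent with $\lvert \cyc_{\sbrac{k+1}} \rvert = k! = (k+1)!/(k+1)$. Thus as a $\symgp{k+1}$-set we have $\cyc_{\sbrac{k+1}} \cong \symgp{k+1}/K$.

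The crux is then a standard feature of transitive actions: their centralizers are semiregular. Indeed, if $z \in Z$ fixes some $\omega$, then for every $\tau \in \symgp{k+1}$ we get $z(\tau\omega) = \tau(z\omega) = \tau\omega$, so $z$ fixes the whole (single) orbit and $z = \mathrm{id}$. A free action has all orbits of size $\lvert Z \rvert$, so $Z$ is transitive iff it is regular, i.e.\ iff $\lvert Z \rvert = \lvert \cyc_{\sbrac{k+1}} \rvert$. Identifying $Z$ with the automorphism group $\mathrm{N}_{\symgp{k+1}}(K)/K$ of the $\symgp{k+1}$-set $\symgp{k+1}/K$ (equivalently, counting $\symgp{k+1}$-equivariant bijections, each of which sends $\omega_0$ to another point with the same stabilizer $K$), this regularity condition becomes $\mathrm{N}_{\symgp{k+1}}(K) = \symgp{k+1}$, that is, $K \trianglelefteq \symgp{k+1}$. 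So the whole proposition reduces to showing that the rotation subgroup $K$ fails to be normal when $k \geq 3$.

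Finally I would settle this by a cycle count. Because $\symgp{k+1}$ acts transitively by conjugation on its $(k+1)$-cycles, it permutes the cyclic subgroups they generate transitively, so $K$ is normal iff it is the unique such subgroup. There are $(k+1-1)! = k!$ cycles of length $k+1$, and each order-$(k+1)$ cyclic subgroup contains exactly $\phi(k+1)$ of them, yielding $k!/\phi(k+1)$ distinct conjugates of $K$. For $k \geq 3$ one has $k! \geq 2k > \phi(k+1)$, so there are at least two such subgroups, $K$ is not normal, and the argument is complete. The threshold is sharp: for $k = 2$ the subgroup $K = A_3$ is normal and $Z \cong \symgp{3}/A_3$ recovers precisely the orientation-reversing $\symgp{2}$-action on $\cyc_{\sbrac{3}}$ exploited in \cite{gessel:spec2trees}. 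I expect the only genuinely delicate point to be the clean identification of $Z$ with $\mathrm{N}_{\symgp{k+1}}(K)/K$; everything downstream of it, namely the semiregularity argument and the cycle count, is routine, and the quantifier over \emph{all} groups $G$ is absorbed entirely into the opening reduction to the single group $Z$.
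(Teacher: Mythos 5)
Your proof is correct, but it takes a genuinely different route from the paper's. The paper argues directly and very briefly: it identifies each cyclic order with the $\pbrac{k+1}$-cycle it induces, so that the label action of $\symgp{k+1}$ becomes conjugation, and then notes that commutation with conjugation by $c$ itself forces $g \cdot c = c \pbrac{g \cdot c} c^{-1}$; hence the whole $G$-orbit of $c$ lies in the centralizer of $c$, and since for $k \geq 3$ not all $\pbrac{k+1}$-cycles commute with one another, no orbit can be all of $\cyc_{\sbrac{k+1}}$. You instead invoke the structure theory of permutation groups: the centralizer $Z$ of a transitive group is semiregular, $Z$ is identified with $N_{\symgp{k+1}}\pbrac{K}/K$ for the point stabilizer $K$ (the rotation subgroup), transitivity of $Z$ is equivalent to normality of $K$ in $\symgp{k+1}$, and normality fails by your count of conjugates. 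All the standard facts you cite are correct as stated --- semiregularity, the identification of equivariant bijections of $\symgp{k+1}/K$ with $N_{\symgp{k+1}}\pbrac{K}/K$, the count of $\phi\pbrac{k+1}$ long cycles per cyclic subgroup --- and the inequality $k! \geq 2k > \phi\pbrac{k+1}$ does hold for $k \geq 3$, so the argument is complete. What the paper's approach buys is brevity and self-containedness: three lines with no external lemmas. What yours buys is sharper information: since $\abs{N_{\symgp{k+1}}\pbrac{K}} = \pbrac{k+1}\phi\pbrac{k+1}$, your identification shows $\abs{Z} = \phi\pbrac{k+1}$, so not only is no commuting action transitive, but every commuting action has orbits of size at most $\phi\pbrac{k+1}$; and it makes the threshold transparent, since $k = 2$ is exactly the case $K = A_{3} \trianglelefteq \symgp{3}$, recovering the orientation-reversing $\symgp{2}$-action on $\cyc_{\sbrac{3}}$ that the paper attributes to the $2$-tree literature.
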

\begin{proof}
  We represent the elements of $\cyc_{\sbrac{k+1}}$ as cyclic permutations on the alphabet $\sbrac{k+1}$; then the action of $\symgp{k+1}$ that permutes labels is exactly the conjugation action on these permutations.
  Consider an action of a group $G$ on $\cyc_{\sbrac{k+1}}$ that commutes with this conjugation action.
  Then, for any $g \in G$ and any $c \in \cyc_{\sbrac{k+1}}$, we have that
  \begin{equation}
    \label{eq:transaction}
    g \cdot c = g \cdot c c c^{-1} = c \pbrac{g \cdot c} c^{-1}
  \end{equation}
  and so $c$ and $g \cdot c$ commute.
  Thus, $c$ commutes with every element of its orbit under the action of $G$.
  But, for $k \geq 3$, not all elements of $\cyc_{\sbrac{k+1}}$ commute, so the action is not transitive.
\end{proof}

We thus cannot hope to attack the coherent orientations of $k$-trees by acting directly on the cyclic orderings of fronts.
Instead, we will use the additional structure on \emph{rooted} coherently-oriented $k$-trees; with rooting, the cyclic orders around black vertices are converted into linear orders, for which there is a natural action of $\symgp{k+1}$.

\subsection{Group actions on $k$-coding trees}\label{ss:actct}
We have noted previously that every labeled $k$-tree admits exactly $k!$ coherent orientations.
Thus, there are $k!$ distinct $k$-coding trees associated to each labeled $k$-tree, which differ only in the $\specname{C}_{k+1}$-structures on their black vertices.
Consider a rooted $k$-coding tree $T$ and a black vertex $v$ which is not the root vertex.
Then one white neighbor of $v$ is the `parent' of $v$ (in the sense that it lies on the path from $v$ to the root).
We thus can convert the cyclic order on the $k+1$ white neighbors of $v$ to a linear order by choosing the parent white neighbor to be last.
There is a natural, transitive, label-independent action of $\symgp{k+1}$ on the set of such linear orders which induces an action on the cyclic orders from which the linear orders are derived.
However, only elements of $\symgp{k+1}$ which fix $k+1$ will respect the structure around the black vertex we have chosen, since its parent white vertex must remain last.

In addition, if we simply apply the action of some $\sigma \in \symgp{k+1}$ to the order on white neighbors of $v$, we change the coherently-oriented $k$-tree $\beta^{-1} \pbrac{T}$ to which $T$ is associated in such a way that it no longer corresponds to the same unoriented $k$-tree.
Let $t$ denote the unoriented $k$-tree associated to $\beta^{-1} \pbrac{T}$; then there exists a coherent orientation of $t$ which agrees with orientation around $v$ induced by $\sigma$.
The $k$-coding tree $T'$ corresponding to this new coherent orientation has the same underlying bicolored tree as $T$ but possibly different orders around its black vertices.
If we think of the $k$-coding tree $T'$ as the image of $T$ under a global action of $\sigma$, orbits under all of $\symgp{}$ will be precisely the classes of $k$-coding trees corresponding to all coherent orientations of specified $k$-trees, allowing us to study unoriented $k$-trees as quotients.
The orientation of $T'$ will be that obtained by applying $\sigma$ at $v$ and then recursively adjusting the other cyclic orders so that fronts which were mirror are made mirror again.
This will ensure that the combinatorial structure of the underlying $k$-tree $t$ is preserved.

Therefore, when we apply some permutation $\sigma \in \symgp{k+1}$ to the white neighbors of a black vertex $v$, we must also permute the cyclic orders of the descendant black vertices of $v$.
In particular, the permutation $\sigma'$ which must be applied to some immediate black descendant $v'$ of $v$ is precisely the permutation on the linear order of white neighbors of $v'$ induced by passing over the mirror bijection from $v'$ to $v$, applying $\sigma$, and then passing back.
We can express this procedure in formulaic terms:
\begin{theorem}
  \label{thm:rhodef}
  If a permutation $\sigma \in \symgp{k+1}$ is applied to a linearized orientation of a black vertex $v$ in rooted $k$-coding tree, the permutation which must be applied to the linearized orientation a child black vertex $v'$ which was attached to the $i$th white child of $v$ (with respect to the linear ordering induced by the orientation) to preserve the mirror relation is $\rho_{i} \pbrac{\sigma}$, where $\rho_{i}$ is the map given by
  \begin{equation}
    \label{eq:rhodef}
    \rho_{i} \pbrac{\sigma}: a \mapsto \sigma \pbrac{i + a} - \sigma \pbrac{i}
  \end{equation}
  in which all sums and differences are reduced to their representatives modulo $k+1$ in $\cbrac{1, 2, \dots, k+1}$.
\end{theorem}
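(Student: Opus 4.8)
The plan is to make the compatibility condition of \cref{def:coktree} fully explicit as a relation between the linear orders at $v$ and $v'$, and then simply track what applying $\sigma$ does to that relation. First I would set up the bookkeeping: identify the $k+1$ fronts of each hedron with the $k+1$ vertices it omits, so that an orientation is a cyclic order on fronts and, via \cref{obs:mirrorfronts}, the mirror bijection $\mu$ across the shared front $f$ becomes the map fixing $f$ and matching each other front of $v$'s hedron to the front of $v'$'s hedron omitting the same shared vertex. The compatibility requirement of \cref{def:coktree} then says exactly that the cyclic order around $v'$ is the $\mu$-image of the cyclic order around $v$.

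Next I would linearize. Writing $L_v, L_{v'} : \{1, \dots, k+1\} \to \text{fronts}$ for the linear orders obtained by cutting each cyclic order so that the parent front is last, the shared front $f$ sits at position $i$ in $L_v$ (by hypothesis) and at position $k+1$ in $L_{v'}$ (as the parent of $v'$). Cutting the $\mu$-image of $v$'s cyclic order at $f$ and reindexing gives the key identity $L_{v'}(a) = \mu(L_v(i+a))$ for all $a$, with the argument reduced mod $k+1$ into $\{1, \dots, k+1\}$; the check $a = k+1$ recovers $L_{v'}(k+1) = \mu(L_v(i)) = \mu(f) = f$, confirming the shared front lands last.

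With this identity in hand the computation is short. Applying $\sigma$ to $v$ replaces $L_v$ by $\tilde L_v = L_v \circ \sigma^{-1}$ (the convention that the front at position $q$ moves to position $\sigma(q)$), so that $f$ now occupies position $\sigma(i)$ in $\tilde L_v$. Demanding that the new orientations remain compatible forces $\tilde L_{v'}(a) = \mu(\tilde L_v(\sigma(i)+a))$, while writing the permutation applied at $v'$ as $\rho_i(\sigma)$ means $\tilde L_{v'} = L_{v'} \circ \rho_i(\sigma)^{-1}$. Equating the two expressions for $\tilde L_{v'}(a)$ and cancelling the injections $\mu$ and $L_v$ yields $i + \rho_i(\sigma)^{-1}(a) = \sigma^{-1}(\sigma(i)+a)$; solving for $\rho_i(\sigma)$ gives $\rho_i(\sigma)(a) = \sigma(i+a) - \sigma(i)$, as claimed, and the substitution $a = k+1$ shows $\rho_i(\sigma)$ fixes $k+1$, consistent with the requirement that the parent front of $v'$ must stay last.

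The routine modular algebra at the end is not where the difficulty lies. The main obstacle is the first two steps: correctly extracting the relation $L_{v'}(a) = \mu(L_v(i+a))$ from the definitions, which requires care about three interacting conventions---the direction in which $\mu$ transports cyclic orders, the ``parent-last'' cut used to linearize each vertex, and the choice of how $\sigma$ acts on positions. A sign error in any one of these produces $\sigma(i+a)-\sigma(i)$ with the wrong orientation or a spurious inverse, so the real work is fixing these conventions coherently and verifying the index shift by $i$ against the concrete small cases (for instance the $k=2$ objects of \cref{fig:exco2tree,fig:exbctree}).
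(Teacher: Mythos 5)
Your proposal is correct and takes essentially the same route as the paper's proof: your key identity $L_{v'}(a) = \mu\bigl(L_v(i+a)\bigr)$ is precisely the paper's observation that the white neighbor in position $a$ of $v'$ is mirror to the white neighbor in position $i+a$ of $v$, and requiring this relation to survive the action of $\sigma$ (which moves the shared front to position $\sigma(i)$ and that neighbor to position $\sigma(i+a)$) forces the new position $\sigma(i+a)-\sigma(i)$, exactly as in the paper. The only difference is presentational: the paper tracks three concrete vertices informally, while you encode the same bookkeeping in the maps $L_v$, $L_{v'}$, $\mu$ and solve for $\rho_i(\sigma)$ by composition algebra.
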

\begin{proof}
  Let $v'$ denote a black vertex which is attached to $v$ by the white vertex $1$, which we suppose to be in position $i$ in the linear order induced by the original orientation of $v$.
  Let $2$ denote the white neighbor of $v'$ which is $a$th in the linear order induced by the original orientation around $v'$.
  It is mirror to the white neighbor $3$ of $v$ which is $\pbrac{i+a}$th in the linear order induced by the original orientation around $v$.
  After the action of $\sigma$ is applied, vertex $3$ is $\sigma \pbrac{i+a}$th in the new linear order around $v$.
  We require that $2$ is still mirror to $3$, so we must move it to position $\sigma \pbrac{i + a} - \sigma \pbrac{i}$ when we create a new linear order around $v'$.
  This completes the proof.
\end{proof}
This procedure is depicted in \cref{fig:rhoapp}.

\begin{figure}[htbp]
  \centering
  \begin{tikzpicture}
    \SetGraphUnit{3.5}

    \Vertex[style=xnode,Math=true]{v}
    \cycccwl{(v)};

    \EA[style=ynode](v){3}
    \path (v) edge [bend right=45, thick] node [below](a3d){$i+a$} (3);
    \path (v) edge [bend left=45, thick] node [above](a3u){$\sigma \pbrac{i+a}$} (3);
    \path (a3d) edge [->, dashed, thick] node [auto]{$\sigma$} (a3u);

    \WE[style=ynode](v){1}
    \path (v) edge [bend left=45, thick] node [below](a1d){$i$} (1);
    \path (v) edge [bend right=45, thick] node [above](a1u){$\sigma \pbrac{i}$} (1);
    \path (a1d) edge [->, dashed, thick] node [auto]{$\sigma$} (a1u);
    
    \WE[style=xnode, Math=true](1){v'}
    \cyccwr{(v')};
    \path (v') edge [thick] node [auto](b1){$0$} (1);

    \WE[style=ynode](v'){2}
    \path (v') edge [bend left=45, thick] node [below](b2d){$a$} (2);
    \path (v') edge [bend right=45, thick] node [above](b2u){$\sigma \pbrac{i+a} - \sigma \pbrac{i}$} (2);
    \path (b2d) edge [->, dashed, thick] node [auto]{$\rho_{i} \pbrac{\sigma}$} (b2u);

    \path (b2d) edge [->, dotted, thick, bend right=15] node [auto]{$\mu$} (a3d);
    \path (b2u) edge [->, dotted, thick, bend left=15] node [auto]{$\mu$} (a3u);
  \end{tikzpicture}
  \caption[The permutation-modifying map $\rho$]{Application of a permutation $\sigma$ to the orientation of a non-root black vertex $v$.
    The vertices $2$ and $3$ are mirror in the original orientation (lower set of edges), as shown by the arrows $\mu$, so we must preserve this mirror relation when we apply $\sigma$.
    The permutation $\sigma$ moves $3$ from the $\pbrac{i+a}$th place to the $\sigma \pbrac{i+a}$th, so $\rho_{i} \pbrac{\sigma}$ must carry $2$ from the $a$th place to the $\pbrac{\sigma \pbrac{i+a} - \sigma \pbrac{i}}$th.}
  \label{fig:rhoapp}
\end{figure}
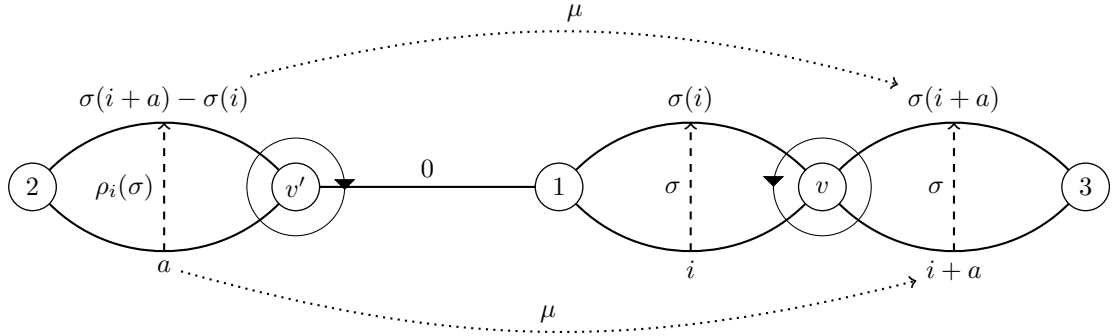

As an aside, we note that, although the construction $\rho$ depends on $k$, the value of $k$ will be fixed in any given context, so we suppress it in the notation.

Any $\sigma$ which is to be applied to a non-root black vertex $v$ must of course fix $k+1$.
We can think of $\symgp{k}$ as the subgroup of $\symgp{k+1}$ of permutations fixing $k+1$, and so in what follows we will refer to this as an $\symgp{k}$-action where appropriate.

In light of \cref{thm:funcdecompct}, we now wish to adapt these ideas into explicit $\symgp{k}$- and $\symgp{k+1}$-actions on $\ctx{k}$, $\cty{k}$, and $\ctxy{k}$ whose orbits correspond to the various coherent orientations of single underlying rooted $k$-trees.
In the case of a $Y$-rooted $k$-coding tree $T$, if we declare that $\sigma \in \symgp{k}$ acts on $T$ by acting directly on each of the black vertices immediately adjacent to the root and then applying $\rho$-derived permutations recursively to their descendants, orbits behave as expected.
The same $\symgp{k}$-action serves equally well for edge-rooted $k$-coding trees, where (for purposes of applying the action of some $\sigma$) we can simply ignore the black vertex in the root.

However, if we begin with an $X$-rooted $k$-coding tree, the cyclic ordering of the white neighbors of the root black vertex has no canonical choice of linearization.
If we make an arbitrary choice of one of the $k+1$ available linearizations, and thus convert to an edge-rooted $k$-coding tree, the full $\symgp{k+1}$-action defined previously can be applied directly to the root vertex.
The orbit under this action of some edge-rooted $k$-coding tree $T$ with a choice of linearization at the root then includes all possible linearizations of the root orders of all possible $X$-rooted $k$-coding trees corresponding to the different coherent orientations of a single $k$-coding tree.

It follows that:
\begin{lemma}
  \label{lem:ctktransact}
  The actions of $\symgp{k}$ on $\cty{k}$ and $\ctxy{k}$ and the action of $\symgp{k+1}$ on $\ctxy{k}$ are transitive in the sense that each orbit corresponds to the set of all coherent orientations of a single underlying rooted unoriented $k$-tree.
  Moreover, these actions all commute with the $\symgp{n}$-actions which permute labels.
\end{lemma}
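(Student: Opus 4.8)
The plan is to verify two things for each of the three actions: first, that every orbit consists precisely of the $k$-coding trees encoding all coherent orientations of one fixed underlying rooted $k$-tree, and second, that each action commutes with relabeling. I would organize the transitivity claim around the observation recorded after \cref{def:coktree} that a coherent orientation of a $k$-tree is uniquely determined by the orientation of any single hedron, together with \cref{thm:rhodef}, which makes the propagation of a permutation explicit.

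For the inclusion that orbits land among coherent orientations of a single fixed tree, I would argue that each action is well-defined and preserves the underlying unoriented rooted $k$-tree. The cleanest route is to define the action conceptually rather than by the $\rho$-formula alone: for $\sigma \in \symgp{k}$ acting on $\cty{k}$, transform the orientations at the root-adjacent black vertices and then take the unique coherent orientation extending the result, with \cref{thm:rhodef} certifying that the recursively propagated orders are exactly this coherent orientation. Because a $k$-coding tree is genuinely a tree, there is a unique path from the root to each black vertex, so the recursive propagation is unambiguous and no consistency condition can fail; this acyclicity is what makes the construction an honest group action. I would separately check that applying $\sigma$ simultaneously to all black vertices sharing the root front preserves their mutual compatibility: fixing the parent root front last and permuting the remaining $k$ positions identically keeps the induced orderings of the omitted vertices of the root front aligned across these hedra, so the mirror relations of \cref{def:mirrorfronts,obs:mirrorfronts} survive. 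The edge-rooted $\symgp{k}$-action is the same construction with the root black vertex treated as an ordinary root-adjacent hedron, and the $\symgp{k+1}$-action on $\ctxy{k}$ is the regular action of $\symgp{k+1}$ on the linearizations of the root black vertex, again propagated outward by the same $\rho$-mechanism.

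For the reverse inclusion I would use simple transitivity. In the $\symgp{k}$-case, $\symgp{k}$ acts simply transitively on the $k!$ linear orders of the non-parent fronts of a fixed root-adjacent hedron, hence on that hedron's $k!$ orientations; since each such orientation extends uniquely to a coherent orientation of the whole tree, the orbit realizes all $k!$ coherent orientations of the fixed rooted tree and nothing else. For the $\symgp{k+1}$-action on $\ctxy{k}$, the regular action on the $(k+1)!$ linearizations of the root hedron simultaneously ranges over all $k!$ orientations of that hedron and all $k+1$ choices of which front is the root front, so the orbit is exactly the set of edge-rootings arising from all coherent orientations of a single hedron-rooted $k$-tree.

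Finally, for commutativity with the label-permuting $\symgp{n}$-actions, I would observe that each structural action is defined entirely in terms of positional data---the cyclic and linear orders on the neighbors of black vertices and the tree incidence structure---and never refers to the $X$- or $Y$-labels; indeed the map $\rho_{i}(\sigma)$ of \cref{eq:rhodef} depends only on the indices $i$ and $a$ and on $\sigma$. Relabeling and the structural action therefore act on independent data and commute on the nose. The main obstacle I anticipate is not a single hard computation but the bookkeeping needed to confirm well-definedness of the simultaneous action at the root front and to state precisely, in the $\symgp{k+1}$-case, how the orbit encodes both the orientations and the choice of root front; once the acyclicity-driven uniqueness of the coherent extension is in hand, the remainder is routine.
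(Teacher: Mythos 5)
Your proposal is correct and follows essentially the same route as the paper, which presents this lemma as an immediate consequence of the construction in the section on group actions on $k$-coding trees: the action is defined by applying $\sigma$ at the root-adjacent black vertices (or to the linearizations at the root black vertex in the $\symgp{k+1}$ case) and propagating outward via the maps $\rho_{i}$ of \cref{thm:rhodef}, with transitivity coming from the fact that an orientation of one hedron extends uniquely to a coherent orientation, and commutativity with relabeling coming from the purely positional nature of the construction. The details you supply---the mirror-compatibility check across all hedra containing the root front, the simple-transitivity count, and the accounting of the $k+1$ choices of root front in the $\symgp{k+1}$ orbit---are exactly the points the paper leaves implicit in its prose.
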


\subsection{$k$-trees as quotients}\label{ss:ktquot}
We have now equipped the various species of rooted $k$-coding trees with actions which commute with permutations of labels, making them `species-compatible'.
Moreover, the non-oriented rooted $k$-trees underlying the $k$-coding trees are naturally identified with orbits under these actions, suggesting that rooted $k$-trees are `quotients' of rooted $k$-coding trees.
The notion of a $\Gamma$-species as developed in \cite[\S 3]{hend:specfield} formalizes this notion of compatibility and provides an enumerative toolset for dealing with quotients of this sort.
In this language, we will treat $\cty{k}$ and $\ctxy{k}$ as $\symgp{k}$-species and $\ctxy{k}$ as an $\symgp{k+1}$-species\footnote{The $\symgp{k}$- and $\symgp{k+1}$-actions on $\ctxy{k}$ are compatible, but we will make explicit reference to $\ctxy{k}$ as an $\symgp{k}$- or $\symgp{k+1}$-species whenever it is important and not completely clear from context which we mean.} (indicating that they carry equivariant actions of the specified group).
(Hereafter, when it is necessary to distinguish, a species which is \emph{not} equipped with any $\Gamma$-species structure will be dubbed an `actionless' species.)

As a consequence of \cref{lem:ctktransact}, then, we can then relate the rooted $\Gamma$-species forms of $\ct{k}$ to the various (actionless) species forms of generic rooted $k$-trees in \cref{thm:dissymk}:
\begin{theorem}
  \label{thm:arootquot}
  For the various rooted forms of the (actionless) species $\kt{k}$ as in \cref{thm:dissymk} and the various rooted $\Gamma$-species forms of $\ct{k}$ as in \cref{thm:funcdecompct} (interpreted as $\symgp{k}$- and $\symgp{k+1}$-species), we have
  \begin{subequations}
    \label{eq:arootquot}
    \begin{align}
      \kty{k} &= \faktor{\cty{k}}{\symgp{k}} \label{eq:ayquot} \\
      \ktxy{k} &= \faktor{\ctxy{k}}{\symgp{k}} \label{eq:axyquot} \\
      \ktx{k} &= \faktor{\ctxy{k}}{\symgp{k+1}} \label{eq:axquot}
    \end{align}
  \end{subequations}
  as isomorphisms of (actionless) species, where $\ctxy{k}$ is an $\symgp{k}$-species in \cref{eq:axyquot} and an $\symgp{k+1}$-species in \cref{eq:axquot}.
\end{theorem}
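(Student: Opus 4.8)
The plan is to read off all three isomorphisms from the encoding $\beta$ of \cref{thm:bctreeenc} together with the orbit description of \cref{lem:ctktransact}, using only the defining property of a quotient $\Gamma$-species. Recall from \cite[\S 3]{hend:specfield} that for a $\Gamma$-species $F$ whose $\Gamma$-action commutes with relabeling, $\faktor{F}{\Gamma}$ is the actionless species whose structures on a given label set are the $\Gamma$-orbits of $F$-structures on that set, equipped with the induced $\symgp{n}$-action. That induced action is well-defined precisely because the two actions commute, which for our actions is guaranteed by the final sentence of \cref{lem:ctktransact}. Thus for each of \cref{eq:ayquot,eq:axyquot,eq:axquot} it suffices to exhibit a natural bijection between the relevant orbits and the corresponding rooted form of $\kt{k}$.

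First I would observe that the encoding $\beta$ respects rooting: a marked front, a marked hedron, or a marked incident hedron--front pair of a coherently-oriented $k$-tree is carried to a marked white vertex, black vertex, or black--white edge of the associated $k$-coding tree, and conversely. Hence $\beta$ restricts to natural isomorphisms between the species of front-, hedron-, and edge-rooted coherently-oriented $k$-trees and the species $\cty{k}$, $\ctx{k}$, and $\ctxy{k}$ of \cref{thm:funcdecompct}, respectively. Composing with $\beta^{-1}$, I may therefore replace each quotient of a rooted coding-tree species by the corresponding quotient of a rooted coherently-oriented $k$-tree species, so that only the step of forgetting orientations remains.

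The heart of the argument is then the orientation-forgetting map. For \cref{eq:ayquot}, \cref{lem:ctktransact} asserts that each $\symgp{k}$-orbit of $\cty{k}$-structures is exactly the set of all coherent orientations of a single front-rooted unoriented $k$-tree; forgetting orientation therefore descends to a well-defined map from orbits to $\kty{k}$-structures, which is surjective because every $k$-tree admits a coherent orientation and injective because an orbit already comprises \emph{all} such orientations. The identical argument applied to the $\symgp{k}$-action on $\ctxy{k}$ yields \cref{eq:axyquot}, with ``front-rooted'' replaced by ``hedron--front--rooted'' (the marked front being fixed by the edge-rooting, so that the fiber has size $k! = |\symgp{k}|$). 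Naturality in both cases is inherited from the naturality of $\beta$ and the commuting of the two group actions.

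The main obstacle is \cref{eq:axquot}, the $\symgp{k+1}$ case, where the bookkeeping is most delicate. Here a hedron-rooted coding tree carries only a cyclic order on its root's white neighbors with no canonical linearization, so passing to $\ctxy{k}$ requires an arbitrary choice among the $k+1$ linearizations; the essential point is that the $\symgp{k+1}$-action absorbs this choice. I would verify that a single $\symgp{k+1}$-orbit of edge-rooted structures ranges simultaneously over all $k+1$ choices of marked root front \emph{and} all $k!$ coherent orientations, so that its image under ``forget the marked front, then forget orientation'' is a single hedron-rooted unoriented $k$-tree, realizing a fiber of size $(k+1)! = |\symgp{k+1}|$. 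Once this is checked, \cref{lem:ctktransact} again identifies the orbits with the orientation-classes of a single hedron-rooted $k$-tree, and the forgetful map is a natural bijection onto $\ktx{k}$, completing the proof.
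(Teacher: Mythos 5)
Your proposal is correct and follows essentially the same route as the paper: the paper presents \cref{thm:arootquot} as an immediate consequence of \cref{lem:ctktransact} (orbits of the $\symgp{k}$- and $\symgp{k+1}$-actions are exactly the sets of coherent orientations, and possible root linearizations in the $\symgp{k+1}$ case, of a single rooted unoriented $k$-tree) combined with the definition of quotient species, which is precisely what you spell out. Your explicit verification that $\beta$ respects rooting and that the $\symgp{k+1}$-orbits absorb both the $k!$ orientations and the $k+1$ root linearizations matches the paper's discussion in \cref{ss:actct} preceding the lemma.
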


As a result, we have explicit characterizations of all the rooted components of the original dissymmetry theorem, \cref{thm:dissymk}.
Thus, through the enumerative toolset of species theory, we can enumerate $k$-trees through a careful enumeration of each of $\cty{k}$ and $\ctxy{k}$.

\section{Automorphisms and cycle indices}\label{s:ktcycind}
Species theory associates to each species $F$ an enumerative power series $\ci{F}$ dubbed the `cycle index' which keeps track of the number of structures with a given automorphism type.
$\Gamma$-species theory provides a natural extension of the cycle index for a $\Gamma$-species $F$, denoted $\gci{\Gamma}{F}$, which keeps track of the number of $\gamma$-invariant structures with a given automorphism type for each $\gamma \in \Gamma$, defined in \cite[\S 3]{hend:specfield}.
We reprint its definition here for convenience:
\begin{equation}
  \label{eq:gcidef}
  \gcielt{\Gamma}{F}{\gamma} \defeq \sum_{n \geq 0} \frac{1}{n!} \sum_{\nu \in \symgp{n}} \fix \pbrac{\gamma \cdot F \sbrac{\nu}} p_{\nu},
\end{equation}
where $p_{\nu} = p_{1}^{\nu_{1}} p_{2}^{\nu_{2}} \dots$ (for $\nu_{i}$ the number of $i$-cycles of $\nu$) is the monomial recording the cycle structure of $\nu$.
(It is important to note that the action of $\symgp{n}$ on labels and the actions of $\symgp{k}$ and $\symgp{k+1}$ on orientations of $k$-coding trees are distinct.
We will refer to label permutations again in the proof of \cref{thm:ctyfuncci}.)

Just as with classical cycle indices (or enumerative generating functions of non-species-theoretic combinatorics), the algebra of $\Gamma$-cycle indices is closely associated to the combinatorial algebra of their species.
We will now apply the results of the preceding sections to compute the cycle indices of the various $\Gamma$-species we have developed.

\subsection{$k$-coding trees: $\cty{k}$ and $\ctxy{k}$}\label{ss:ctcycind}
\Cref{cor:dissymkreform} of the dissymmetry theorem for $k$-trees has a direct analogue in terms of cycle indices:
\begin{theorem}
  \label{thm:dissymkci}
  For the various forms of the species $\kt{k}$ as in \cref{s:dissymk}, we have
  \begin{equation*}
    \label{eq:dissymkci}
    \ci{\kt{k}} = \ci{\ktx{k}} + \ci{\kty{k}} - \ci{\ktxy{k}}.
  \end{equation*}
\end{theorem}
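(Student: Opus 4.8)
The plan is to obtain this identity by applying the cycle-index operator to the species-level isomorphism already established in \cref{thm:dissymk} (equivalently \cref{cor:dissymkreform}), using the fact that the cycle index is an isomorphism-invariant additive functor on species. Concretely, I would invoke two structural properties of the cycle index $\ci{-}$: naturally isomorphic species have identical cycle indices, and the cycle index is additive over the sum of species, $\ci{F + G} = \ci{F} + \ci{G}$. Both are standard (see \cite{bll:species}), the additivity following because $\fix$ counts fixed structures and a disjoint union of structure sets contributes the sum of the two fixed-point counts term by term in the defining formula.

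The cleanest route avoids species subtraction entirely at the level of power series. Rather than starting from the subtractive reformulation \cref{eq:dissymkreform}, I would start from the additive form \cref{eq:dissymk},
\[
  \ktx{k} + \kty{k} = \kt{k} + \ktxy{k},
\]
apply $\ci{-}$ to both sides, and invoke isomorphism-invariance together with additivity to obtain
\[
  \ci{\ktx{k}} + \ci{\kty{k}} = \ci{\kt{k}} + \ci{\ktxy{k}}.
\]
Since each of these is an honest (non-virtual) power series, I can then rearrange by ordinary subtraction of power series to conclude
\[
  \ci{\kt{k}} = \ci{\ktx{k}} + \ci{\kty{k}} - \ci{\ktxy{k}},
\]
as claimed.

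The one point that deserves care—and the place where I expect the only genuine subtlety to lie—is the passage through the subtraction. The rearrangement route above sidesteps the issue by working only with convergent power series, for which subtraction is unproblematic, so I would present that as the main argument. If one instead wishes to derive the result directly from the virtual-species identity \cref{eq:dissymkreform}, then one must know that $\ci{-}$ extends to a ring homomorphism on the ring of virtual species, so that $\ci{F - G} = \ci{F} - \ci{G}$ whenever the difference is well-defined. This extension is standard \cite[\S 2.5]{bll:species}, and the embedding of $\ktxy{k}$ into $\ktx{k} + \kty{k}$ via the centering map of \cref{thm:dissymk} (noted after \cref{cor:dissymkreform}) guarantees the difference is a bona fide species. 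I would relegate this virtual-species viewpoint to a remark, since it is not needed for the proof but clarifies the conceptual parallel with \cref{eq:dissymkreform}.

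Because the entire argument is a formal consequence of the additivity and isomorphism-invariance of the cycle-index functor, no combinatorial work beyond \cref{thm:dissymk} is required; the analogous statements for the refined $\Gamma$-cycle indices $\gci{\Gamma}{-}$ will follow by the identical argument once the relevant $\Gamma$-species isomorphisms are in hand, which is the direction the subsequent sections pursue.
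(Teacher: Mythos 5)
Your proposal is correct and matches the paper's treatment: the paper states \cref{thm:dissymkci} as an immediate consequence of the dissymmetry theorem (\cref{thm:dissymk}/\cref{cor:dissymkreform}) together with the standard facts that the cycle index is invariant under species isomorphism and additive over species sums, which is precisely the argument you give. Your extra care in starting from the additive form \cref{eq:dissymk} and rearranging honest power series, rather than invoking virtual species, is a sound (and arguably cleaner) way to make the implicit step rigorous.
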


Thus, we need to calculate the cycle indices of the three rooted forms of $\kt{k}$.
A straightforward application of Burnside's lemma allows us to pass from the cycle index of a $\Gamma$-species $F$ to the ordinary cycle index of the quotient species $\nicefrac{\Gamma}{F}$:
\begin{lemma}\label{lem:qsci}
  For a $\Gamma$-species $F$, the ordinary cycle index of the quotient species $\nicefrac{F}{\Gamma}$ is given by 
  \begin{equation*}
    \label{eq:quotcycind}
    \ci{F / \Gamma} = \qgci{\Gamma}{F} \defeq \frac{1}{\abs{\Gamma}} \sum_{\gamma \in \Gamma} \gcielt{\Gamma}{F}{\gamma} = \frac{1}{\abs{\Gamma}} \sum_{\substack{n \geq 0 \\ \nu \in \symgp{n} \\ \gamma \in \Gamma}} \frac{1}{n!} \pbrac{\gamma \cdot F \sbrac{\nu}} p_{\nu}.
  \end{equation*}
  where we define $\qgci{\Gamma}{F} = \frac{1}{\abs{\Gamma}} \sum_{\gamma \in \Gamma} \gcielt{\Gamma}{F}{\gamma}$ for future convenience.
\end{lemma}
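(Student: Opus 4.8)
The plan is to invoke the orbit-counting (Burnside) lemma in the refined form that counts orbits \emph{fixed} by a commuting permutation. Recall that the ordinary cycle index of any species $G$ is $\ci{G} = \sum_{n \geq 0} \frac{1}{n!} \sum_{\nu \in \symgp{n}} \fix\pbrac{G\sbrac{\nu}} p_{\nu}$, so the whole computation reduces to understanding $\fix\pbrac{\pbrac{\faktor{F}{\Gamma}}\sbrac{\nu}}$ for each label permutation $\nu$. By construction the structures of $\faktor{F}{\Gamma}$ on a label set of size $n$ are exactly the $\Gamma$-orbits of $F\sbrac{n}$, and because the $\Gamma$-action is by definition equivariant with respect to relabeling (the commutativity established in our setting in \cref{lem:ctktransact}), the transport $F\sbrac{\nu}$ descends to a well-defined permutation $\bar{\nu}$ of the orbit set. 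It is the fixed points of this $\bar{\nu}$ that we must count.

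First I would isolate the purely group-theoretic fact underlying everything: when a finite group $\Gamma$ acts on a finite set $S$ and $\nu$ is a permutation of $S$ commuting with that action, the number of $\Gamma$-orbits fixed by the induced permutation $\bar{\nu}$ is
\begin{equation*}
  \fix\pbrac{\bar{\nu}} = \frac{1}{\abs{\Gamma}} \sum_{\gamma \in \Gamma} \fix\pbrac{\gamma \nu},
\end{equation*}
where $\gamma \nu$ denotes the commuting composite action on $S$. The proof is a double count of the pairs $\cbrac{\pbrac{s, \gamma} : \gamma \nu s = s}$. An orbit contributes no such pairs unless it is $\bar{\nu}$-fixed, since $\gamma \nu s = s$ forces $\nu s \in \Gamma s$; and each $\bar{\nu}$-fixed orbit $O$ contributes exactly $\abs{\Gamma}$ pairs, because for each $s \in O$ the elements $\gamma$ with $\gamma \nu s = s$ form a coset of the stabilizer $\Gamma_{s}$ (of size $\abs{\Gamma_{s}}$) and there are $\abs{O} = \abs{\Gamma}/\abs{\Gamma_{s}}$ choices of $s$. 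Dividing by $\abs{\Gamma}$ gives the stated count.

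With this lemma in hand I would specialize to $S = F\sbrac{n}$, with $\Gamma$ acting as the $\Gamma$-species structure and $\nu$ acting as $F\sbrac{\nu}$, obtaining $\fix\pbrac{\pbrac{\faktor{F}{\Gamma}}\sbrac{\nu}} = \frac{1}{\abs{\Gamma}} \sum_{\gamma \in \Gamma} \fix\pbrac{\gamma \cdot F\sbrac{\nu}}$. Substituting into the cycle-index formula and, since $\Gamma$ is finite, freely moving the sum over $\gamma$ outside the formal sums over $n$ and $\nu$ yields
\begin{equation*}
  \ci{\faktor{F}{\Gamma}} = \frac{1}{\abs{\Gamma}} \sum_{\gamma \in \Gamma} \sum_{n \geq 0} \frac{1}{n!} \sum_{\nu \in \symgp{n}} \fix\pbrac{\gamma \cdot F\sbrac{\nu}} p_{\nu} = \frac{1}{\abs{\Gamma}} \sum_{\gamma \in \Gamma} \gcielt{\Gamma}{F}{\gamma},
\end{equation*}
where the inner triple sum is recognized as $\gcielt{\Gamma}{F}{\gamma}$ by its definition in \cref{eq:gcidef}. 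This is exactly $\qgci{\Gamma}{F}$, as claimed. The one genuinely substantive step is the refined orbit-counting identity of the second paragraph; the rest is bookkeeping together with the invocation of equivariance. I expect the main subtlety to be verifying that $F\sbrac{\nu}$ truly descends to the orbit set (that $\Gamma$-orbits are carried to $\Gamma$-orbits), which is guaranteed precisely by the commuting of the two actions, and the routine care needed to justify the interchange of summations at the level of formal power series.
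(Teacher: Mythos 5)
Your proposal is correct, and it is precisely the argument the paper has in mind: the paper states \cref{lem:qsci} without proof, calling it ``a straightforward application of Burnside's lemma,'' and your refined orbit-counting identity (counting orbits fixed by the permutation induced on the orbit set, proved by double-counting pairs) followed by the interchange of summations is exactly that application carried out in full. The only cosmetic remark is that the equivariance of the $\Gamma$-action with respect to relabeling is part of the definition of a $\Gamma$-species itself, so no appeal to the paper's specific setting is needed for the general statement.
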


From \cref{thm:arootquot} and by \cref{lem:qsci} we obtain:
\begin{theorem}
  \label{thm:aquotci}
  For the various forms of the species $\kt{k}$ as in \cref{s:dissymk} and the various $\symgp{k}$-species and $\symgp{k+1}$-species forms of $\ct{k}$ as in \cref{ss:actct}, we have
  \begin{subequations}
    \label{eq:aquotci}
    \begin{align}
      \ci{\kty{k}} &= \qgci{\symgp{k}}{\cty{k}} = \frac{1}{k!} \cdot \smashoperator{\sum_{\sigma \in \symgp{k}}} \gcielt{\symgp{k}}{\cty{k}}{\sigma} \label{eq:ayquotci} \\
      \ci{\ktxy{k}} &= \qgci{\symgp{k}}{\ctxy{k}} = \frac{1}{k!} \cdot \smashoperator{\sum_{\sigma \in \symgp{k}}} \gcielt{\symgp{k}}{\ctxy{k}}{\sigma} \label{eq:axyquotci} \\
      \ci{\ktx{k}} &= \qgci{\symgp{k+1}}{\ctxy{k}} = \frac{1}{\pbrac{k+1}!} \cdot \smashoperator{\sum_{\sigma \in \symgp{k+1}}} \gcielt{\symgp{k+1}}{\ctxy{k}}{\sigma} \label{eq:axquotci}
    \end{align}
  \end{subequations}
\end{theorem}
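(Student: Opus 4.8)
The plan is to derive this theorem directly by combining the quotient descriptions of \cref{thm:arootquot} with the quotient cycle index formula of \cref{lem:qsci}. Since all of the structural work has already been carried out in establishing those two results, the proof amounts to a single substitution for each of the three equations.

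First I would note that the cycle index is an isomorphism invariant of actionless species, so each species isomorphism in \cref{eq:arootquot} immediately yields the corresponding equality of ordinary cycle indices; for instance, $\ci{\kty{k}} = \ci{\faktor{\cty{k}}{\symgp{k}}}$. I would then apply \cref{lem:qsci} to each right-hand quotient. Taking $F = \cty{k}$ and $\Gamma = \symgp{k}$ (so that $\abs{\symgp{k}} = k!$) gives $\ci{\faktor{\cty{k}}{\symgp{k}}} = \qgci{\symgp{k}}{\cty{k}} = \frac{1}{k!} \sum_{\sigma \in \symgp{k}} \gcielt{\symgp{k}}{\cty{k}}{\sigma}$, which is exactly \cref{eq:ayquotci}. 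The identical argument with $F = \ctxy{k}$ regarded as an $\symgp{k}$-species yields \cref{eq:axyquotci}, and with $F = \ctxy{k}$ regarded as an $\symgp{k+1}$-species (so that $\abs{\symgp{k+1}} = \pbrac{k+1}!$) yields \cref{eq:axquotci}.

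I do not expect any genuine obstacle here, since the statement is a formal consequence of results already in hand; the content lives entirely in \cref{thm:arootquot} and \cref{lem:qsci}. The one point I would state explicitly is that $\ctxy{k}$ carries two \emph{different} $\Gamma$-species structures, so that \cref{lem:qsci} is being invoked for two distinct quotients of the same underlying actionless species: as an $\symgp{k}$-species in \cref{eq:axyquotci} and as an $\symgp{k+1}$-species in \cref{eq:axquotci}. Consequently the two resulting $\Gamma$-cycle indices $\gci{\symgp{k}}{\ctxy{k}}$ and $\gci{\symgp{k+1}}{\ctxy{k}}$ must not be conflated, and the normalizing factors $\frac{1}{k!}$ and $\frac{1}{\pbrac{k+1}!}$ differ accordingly---exactly the distinction already flagged in the footnote of \cref{ss:ktquot}. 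With these three substitutions carried out, the equations of \cref{eq:aquotci} follow.
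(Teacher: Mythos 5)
Your proposal is correct and is precisely the paper's own argument: the paper states this theorem as an immediate consequence of \cref{thm:arootquot} and \cref{lem:qsci}, obtained by exactly the substitutions you describe (including the distinction between the $\symgp{k}$- and $\symgp{k+1}$-species structures on $\ctxy{k}$). Nothing further is needed.
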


We thus need only calculate the various $\Gamma$-cycle indices for the $\symgp{k}$-species and $\symgp{k+1}$-species forms of $\cty{k}$ and $\ctxy{k}$ to complete our enumeration of general $k$-trees.

In \cref{thm:funcdecompct}, the functional equations for the (actionless) species $\cty{k}$ and $\ctxy{k}$ both include terms of the form $\specname{L}_{k} \circ \cty{k}$.
The plethysm of (actionless) species does have a generalization to $\Gamma$-species, as given in \cite[\S 3]{hend:specfield}, but it does not correctly describe the manner in which $\symgp{k}$ acts on linear orders of $\cty{k}$-structures in these recursive decompositions.
Specifically, for two $\Gamma$-species $F$ and $G$, an element $\gamma \in \Gamma$ acts on an $\pbrac{F \circ G}$-structure (colloquially, `an $F$-structure of $G$-structures') by acting on the $F$-structure and on each of the $G$-structures independently.
In our action of $\symgp{k}$, however, the actions of $\sigma$ on the descendant $\cty{k}$-structures are \emph{not} independent---they depend on the position of the structure in the linear ordering around the parent black vertex.
In particular, if $\sigma$ acts on some non-root black vertex, then $\rho_{i} \pbrac{\sigma}$ acts on the white vertex in the $i$th place, where in general $\rho_{i} \pbrac{\sigma} \neq \sigma$.

Thus, we consider automorphisms of these $\symgp{k}$-structures directly.
First, we consider the component species $X \cdot \specname{L}_{k} \pbrac[\big]{\cty{k}}$.
\begin{lemma}
  \label{lem:ctyinvar}
  Let $B$ be a structure of the species $F_{k} = X \cdot \specname{L}_{k} \pbrac[\big]{\cty{k}}$.
  Let $W_{i}$ be the $\cty{k}$-structure in the $i$th position in the linear order.
  Then some $\sigma \in \symgp{k}$ acts as an automorphism of $B$ if and only if, for each $i \in \sbrac{k+1}$, we have $\pbrac*{\rho_{i} \sigma} W_{i} \cong W_{\sigma \pbrac{i}}$.
\end{lemma}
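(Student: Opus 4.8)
The plan is to unwind the definition of the $\symgp{k}$-action on $F_{k}$-structures set out in \cref{ss:actct} and made precise by \cref{thm:rhodef}, and then read off directly the condition under which this action fixes $B$ up to relabeling. First I would recall that, viewing $\symgp{k}$ as the stabilizer of the top slot $k+1$ inside $\symgp{k+1}$, the permutation $\sigma$ acts on an $F_{k} = X \cdot \specname{L}_{k}\pbrac{\cty{k}}$-structure in two coupled ways: it permutes the entries of the linear order, sending the entry in position $i$ to position $\sigma\pbrac{i}$, and it \emph{simultaneously} twists the structure $W_{i}$ occupying that position by applying to it the $\symgp{k}$-action of $\rho_{i}\pbrac{\sigma}$. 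The single $X$-label at the root black vertex is left untouched. Thus the entry of $\sigma \cdot B$ in position $j$ is obtained from $W_{\sigma^{-1}\pbrac{j}}$ by applying $\rho_{\sigma^{-1}\pbrac{j}}\pbrac{\sigma}$ to it.

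Next I would observe that $\sigma$ acts as an automorphism of $B$ exactly when $\sigma \cdot B \cong B$. The key structural point is that $B$ and $\sigma \cdot B$ are both $\specname{L}_{k}$-structures of $\cty{k}$-structures sharing the same root, and a linear order admits no nontrivial automorphisms; hence any isomorphism between them must preserve positions. Consequently $\sigma \cdot B \cong B$ holds if and only if the two structures agree slot-by-slot up to relabeling, that is, the entry just computed in position $j$ is isomorphic to $W_{j}$ for every $j$. Reindexing by $i = \sigma^{-1}\pbrac{j}$ converts this into $\pbrac{\rho_{i}\sigma} W_{i} \cong W_{\sigma\pbrac{i}}$, which is precisely the asserted condition.

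Both implications then fall out of this slot-by-slot reduction. For the forward direction, a relabeling realizing $\sigma \cdot B \cong B$ restricts on the labels of each $W_{i}$ to an isomorphism $\pbrac{\rho_{i}\sigma} W_{i} \cong W_{\sigma\pbrac{i}}$; for the converse, the individual isomorphisms may be assembled, together with the identity on the root $X$-label, into a single relabeling of the whole structure that realizes $\sigma \cdot B \cong B$. The index $i = k+1$ names the fixed parent slot; since $\sigma$ fixes $k+1$ and one checks that $\rho_{k+1}\pbrac{\sigma} = \sigma$ there, this value contributes only the tautological requirement that the parent position map to itself and imposes no condition on the listed $\cty{k}$-substructures.

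I expect the only real difficulty to be bookkeeping: keeping the direction of the positional permutation consistent with the twisting maps $\rho_{i}\pbrac{\sigma}$, so that the substructure relocated to position $\sigma\pbrac{i}$ is compared against $W_{\sigma\pbrac{i}}$ and not against $W_{i}$, and correctly matching the per-component relabelings when assembling or restricting the global one. The feature that makes the argument clean, and that I would highlight, is the rigidity of $\specname{L}_{k}$: unlike a set, a linear order has no internal symmetry, so global isomorphism of the two $F_{k}$-structures decomposes unambiguously into the stated family of componentwise isomorphisms.
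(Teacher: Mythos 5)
Your proof is correct and takes essentially the same approach as the paper's: unwind the coupled action of $\sigma$ (permuting the linear order while twisting the structure in position $i$ by $\rho_{i}\pbrac{\sigma}$, leaving the root $X$-label untouched) and read off the resulting slot-by-slot condition. The only difference is one of detail --- you make explicit the rigidity of $\specname{L}_{k}$, the reindexing, and the assembly/restriction of relabelings in both directions, which the paper compresses into ``That this suffices is clear.''
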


\begin{proof}
  Recall that the action of $\sigma \in \symgp{k}$ is in fact the action of the lift of $\sigma$ as an element of $\symgp{k+1}$.
  The $X$-label on the black root of $B$ is not affected by the action of $\sigma$, so no conditions on $\sigma$ are necessary to accommodate it.
  However, the $\specname{L}_{k}$-structure on the white children of the root is permuted by $\sigma$, and we apply to each of the $W_{i}$'s the action of $\pbrac*{\rho_{i} \sigma}$.
  (See \cref{fig:fkconst}.)
  Thus, $\sigma$ is an automorphism of $B$ if and only if the combination of applying $\sigma$ to the linear order and $\rho_{i} \sigma$ to each $W_{i}$ is an automorphism.
  Since $\sigma$ `carries' each $W_{i}$ onto $W_{\sigma \pbrac{i}}$, we must have that $\pbrac*{\rho_{i} \sigma} W_{i} \cong W_{\sigma \pbrac{i}}$, as claimed.
  That this suffices is clear. \qedhere

  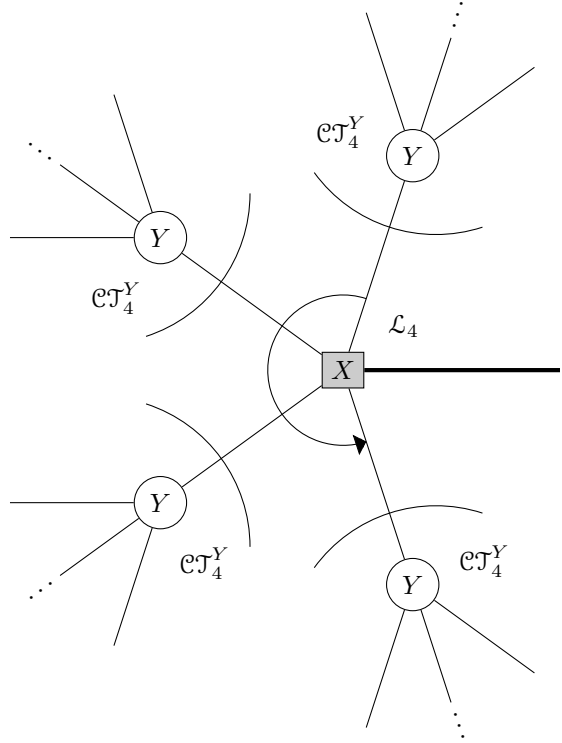
\begin{figure}[htb]
    \centering
    \def\kval{4}
    \begin{tikzpicture}
      \pgfmathparse{int(1+\kval)} 
      \let\numfronts\pgfmathresult

      \pgfmathparse{180/\numfronts}
      \let\childshift\pgfmathresult

      \node [style=xnode] (root) at (0,0) {$X$};
      \draw[->,postaction={decorate},decoration={markings, mark = at position 1 with {\arrow{triangle 90}}}] (root) ++(2*\childshift:1cm) arc (2*\childshift:360*(1-1/\numfronts):1cm); 

      \draw (root) -- ++(0:3) [ultra thick] node [fill=white] {};

      \draw (180/\numfronts:1) node {$\specname{L}_{\kval}$};

      \foreach \i in {1, ..., \kval} {
        \pgfmathparse{360*\i/\numfronts}
        \let\theta\pgfmathresult     
        
        \node [style=ynode] (child\i) at (\theta:3) {$Y$};
        \path [draw] (root) -- (child\i);
        \draw (child\i) ++(\theta+90:1) node {$\cty{\kval}$};

        \draw (child\i) ++(\theta:1cm) ++(180+\theta-\childshift:2cm) arc (180+\theta-\childshift:180+\theta+\childshift:2cm);

        \path [draw] (child\i) -- ++(\theta:2) node [rotate=\theta,fill=white] {$\cdots$};
        \path [draw] (child\i) -- ++(\theta+\childshift:2);
        \path [draw] (child\i) -- ++(\theta-\childshift:2);
      }
      
    \end{tikzpicture}
    \caption[An example $F_{k}$-structure]{An example $F_{\kval}$-structure.}
    \label{fig:fkconst}
  \end{figure}
\end{proof}

We hereafter treat $F_{k}$ as a $\symgp{k}$-species with respect to this action, but note that $F_{k} = X \cdot \specname{L}_{k} \pbrac[\big]{\cty{k}}$ is \emph{not} an isomorphism of $\symgp{k}$-species.

We now have the tools in hand to find recursive functional equations satisfied by $\gci{\symgp{k}}{\cty{k}}$ and $\gci{\symgp{k}}{\ctxy{k}}$.

\begin{theorem}[Cycle index of $\cty{k}$]
  \label{thm:ctyfuncci}
  The $\symgp{k}$-cycle index for the species $\cty{k}$ is characterized by the recursive functional equations
  \begin{subequations}
    \label{eq:ctyfuncci}
    \begin{gather}
      \label{eq:ctyfunccigci}
      \gci{\symgp{k}}{\cty{k}} = \gci{\symgp{k}}{Y} \cdot \pbrac*{\gci{\symgp{k}}{\specname{E}} \circ \gci{\symgp{k}}{F_{k}}} \\
      \label{eq:ctyfunccif}
      \gcielt{\symgp{k}}{F_{k}}{\sigma} = p_{1} \sbrac{x} \cdot \smashoperator{\prod_{c \in C \pbrac{\sigma}}} \gci{\symgp{k}}{\cty{k}} \pbrac[\Big]{\prod_{i \in c} \rho_{i} \pbrac{\sigma}} \pbrac*{p_{\abs{c}} \sbrac{x}, p_{2 \abs{c}} \sbrac{x}, \dots; p_{\abs{c}} \sbrac{y}, p_{2 \abs{c}} \sbrac{y}, \dots}.
    \end{gather}
  \end{subequations}
  In \cref{eq:ctyfunccigci}, the plethysm $\circ$ is that of $\symgp{k}$-species, $\specname{E}$ is the $\symgp{k}$-species of sets with the trivial $\symgp{k}$-action, and $Y$ is the $\symgp{k}$-species of $Y$-labeled singletons with the trivial $\symgp{k}$-action.
  In \cref{eq:ctyfunccif}, $C \pbrac{\sigma}$ denotes the set of cycles of $\sigma$ (as a $k$-permutation), and the product is taken in order with respect to any choice of linearization of the cyclic order of the elements of $c$.
\end{theorem}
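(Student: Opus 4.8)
The plan is to prove the two displayed equations separately: I would derive \cref{eq:ctyfunccigci} from the $\symgp{k}$-species structure of the decomposition in \cref{thm:funcdecompct}, and obtain \cref{eq:ctyfunccif} by a direct fixed-point count against the definition of the $\Gamma$-cycle index in \cref{eq:gcidef}, organized by the cycle structure of $\sigma$.

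First I would upgrade the actionless isomorphism $\cty{k} = Y \cdot \specname{E} \pbrac{F_{k}}$ of \cref{thm:funcdecompct} to an isomorphism of $\symgp{k}$-species in which $\specname{E} \pbrac{F_{k}}$ carries the \emph{standard} $\Gamma$-plethysm action. The key observation is that $\sigma \in \symgp{k}$ acts on a $\cty{k}$-structure by acting directly and \emph{independently} on each black vertex adjacent to the root together with its descendant subtree---that is, on each $F_{k}$-structure in the set---precisely as the $\Gamma$-plethysm of $\specname{E}$ (with trivial action) with $F_{k}$ prescribes, while the $Y$-root is fixed. Granting this, \cref{eq:ctyfunccigci} is immediate from the product and plethysm rules for $\Gamma$-cycle indices of \cite[\S 3]{hend:specfield}, since $\gci{\symgp{k}}{Y}$ and $\gci{\symgp{k}}{\specname{E}}$ are the cycle indices of species with the trivial action. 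The subtlety flagged after \cref{lem:ctyinvar}---that $F_{k} = X \cdot \specname{L}_{k} \pbrac{\cty{k}}$ is not a $\symgp{k}$-species isomorphism---does not obstruct this step, as the twisting lives entirely \emph{inside} $F_{k}$.

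The substance of the theorem is \cref{eq:ctyfunccif}. Here I would expand $\gcielt{\symgp{k}}{F_{k}}{\sigma}$ directly from \cref{eq:gcidef} as a weighted count of $F_{k}$-structures fixed by $\sigma \cdot F_{k} \sbrac{\nu}$, summed over label permutations $\nu$. An $F_{k}$-structure is an $X$-label on its black root together with a linear order $W_{1}, \dots, W_{k}$ of $\cty{k}$-structures; the root label is a singleton untouched by $\sigma$, so it contributes the factor $p_{1} \sbrac{x}$ and merely demands that $\nu$ fix it. By \cref{lem:ctyinvar}, $\sigma$ carries $W_{i}$, after applying $\rho_{i} \pbrac{\sigma}$, onto position $\sigma \pbrac{i}$, so I would partition the positions into the cycles $C \pbrac{\sigma}$ of $\sigma$ as a $k$-permutation and treat each cycle separately. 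Chaining the conditions $\pbrac{\rho_{i_{j}} \pbrac{\sigma}} W_{i_{j}} \cong W_{i_{j+1}}$ around a cycle $c = \pbrac{i_{1} \cdots i_{\ell}}$ of length $\ell = \abs{c}$ shows that all $\ell$ structures in $c$ are determined by the single representative $W_{i_{1}}$, which must be invariant under the composite $\prod_{i \in c} \rho_{i} \pbrac{\sigma}$; this product is well-defined up to cyclic rotation, whose terms are conjugate, and $\gci{\symgp{k}}{\cty{k}}$ is constant on conjugacy classes of its group argument (because the $\Gamma$-action commutes with label permutations), so a single value results.

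Finally I would carry out the per-cycle label bookkeeping, which I expect to be the main obstacle. This is a \emph{twisted} analogue of the standard rule for the cycle index of a plethysm (equivalently, of composition with a cyclic group): because the $\ell$ copies in a cycle $c$ are cyclically permuted, any fixing label permutation $\nu$ must shift the label set of each copy onto the next, so that the accumulated structural twist around the loop is exactly $\prod_{i \in c} \rho_{i} \pbrac{\sigma}$, and $\nu$ restricted to the block realizes this twist on the representative $W_{i_{1}}$. Summing over $\nu$ and absorbing the factor $\frac{1}{n!}$ then reorganizes the contribution of the cycle into $\gci{\symgp{k}}{\cty{k}}$ evaluated at the group element $\prod_{i \in c} \rho_{i} \pbrac{\sigma}$ with each $p_{m}$ inflated to $p_{m \abs{c}}$ in both the $x$- and $y$-variables. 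The delicate point is justifying this inflation $p_{m} \mapsto p_{m \abs{c}}$ together with the simultaneous emergence of the twisted element $\prod_{i \in c} \rho_{i} \pbrac{\sigma}$---rather than a naive product of independent per-copy factors---since the action on the copies is not the independent action of ordinary $\Gamma$-plethysm but is warped by the position-dependent maps $\rho_{i}$ of \cref{thm:rhodef}. Taking the product of these per-cycle factors over all $c \in C \pbrac{\sigma}$ and multiplying by $p_{1} \sbrac{x}$ yields \cref{eq:ctyfunccif}, completing the proof.
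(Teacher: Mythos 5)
Your proposal is correct and follows essentially the same route as the paper's own proof: the same $Y \cdot \specname{E}\pbrac{F_{k}}$ decomposition with independence of the action on components for \cref{eq:ctyfunccigci}, and for \cref{eq:ctyfunccif} the same per-cycle analysis via \cref{lem:ctyinvar}, in which the structures along a cycle $c$ are determined by one representative that must be fixed by the composite twist $\prod_{i \in c} \rho_{i}\pbrac{\sigma}$. The ``delicate point'' you flag is resolved in the paper exactly as you sketch it: each $l$-cycle of the representative's automorphism corresponds to an $\pbrac{l \cdot \abs{c}}$-cycle of the full label permutation (giving the inflation $p_{m} \mapsto p_{m\abs{c}}$), with the bookkeeping organized as coefficient extraction over partition decompositions $\pi - \cbrac{1} = \bigsqcup_{i} \pi_{i}$, $\tau = \bigsqcup_{i} \tau_{i}$ rather than your equivalent direct sum over label permutations $\nu$.
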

\begin{proof}
  Let $T$ be a structure of the $\symgp{k}$-species $\cty{k}$.
  It is clear that $T$ may be decomposed into a singleton of type $Y$, corresponding to the label on the root white vertex, and an arbitrary set ($\specname{E}$-structure) of descendant $X$-rooted trees, all of the same ``structure type'' (that is, species), which we denote $F_{k}$.
  Furthermore, it is apparent that some $\sigma \in \symgp{k}$ acts on $T$ by acting independently on $Y$ (by fixing the root), on $\specname{E}$ (by preserving the set of neighbors of that root), and on each $F_{k}$-structure (in some more complicated manner, to be discussed in what follows.
  \Cref{eq:ctyfunccigci} follows immediately from these observations.
  It remains only to analyze the $\symgp{k}$-species $F_{k}$ of these $X$-rooted descendant structures.
  
  By \cref{lem:ctyinvar}, if $\sigma \cdot \cty{k} \sbrac{\pi, \tau}$ fixes some $F_{k}$ structure, the $\rho_{i} \pbrac{\sigma}$-image of the $\cty{k}$-structure at position $i$ must be its $\pbrac*{\cty{k} \sbrac{\pi, \tau}}^{-1}$-image also.
  Furthermore, if $i$ is in a cycle $c$ of length $\abs{c}$, then all the other $\cty{k}$-structures along $c$ are determined by the choice of the structure at position $i$, and that $\cty{k}$-structure must be sent to \emph{itself} by $\prod_{j \in c} \rho_{j} \pbrac{\sigma}$.
  The action of this permutation on that structure must then be identical to that of $\pbrac*{\cty{k} \sbrac{\pi, \tau}}^{-\abs{c}}$, which we observe must then restrict to an automorphism of that $\cty{k}$-structure, each $l$-cycle of which corresponds to an $\pbrac{l \cdot \abs{c}}$-cycle of $\cty{k} \sbrac{\pi, \tau}$.

  Now we consider the $\symgp{k}$-cycle index $\gci{\symgp{k}}{F_{k}}$ of this $\symgp{k}$-species $F_{k}$.
  Fix partitions $\pi$ (representing the cycle type of a permutation acting on black ($x$-) vertex labels) and $\tau$ (similarly for white ($y$-) labels) and some $\sigma \in \symgp{k}$.
  (In what follows, we let $\hat{\lambda}$ denote some arbitrarily-chosen permutation of cycle type $\lambda$ when needed.)
  Then the coefficient on the $p_{\pi} \sbrac{x} p_{\tau} \sbrac{y}$-term of $\gcielt{\symgp{k}}{F_{k}}{\sigma}$ is $\frac{1}{\abs{\pi}! \abs{\tau}!}$ times the number of $F_{k}$-structures for which $\sigma \cdot \cty{k} \sbrac{\hat{\pi}, \hat{\tau}}$ is an automorphism.
  Let $f(\pi, \tau)$ denote this number.

  Suppose that $f(\pi, \tau)$ is nonzero.
  By the above, there must exist decompositions\footnote{By `decomposition' of a partition we mean a partition of it as a multiset---i.e.\ a choice of sub-multisets whose pairwise intersections are trivial and whose union is the original partition.} $\pi - \cbrac{1} = \bigsqcup_{i} \pi_{i}$ and $\tau = \bigsqcup_{i} \tau_{i}$, each having $\abs{C \pbrac{\sigma}}$ components, such that all the parts of $\pi_{i}$ and $\tau_{i}$ are multiples of the length of the $i$th cycle of $\sigma$.
  Let $\frac{\pi_{i}}{\abs{c_{i}}}$ and $\frac{\tau_{i}}{\abs{c_{i}}}$ denote the partitions resulting from dividing each part of $\pi_{i}$ and $\tau_{i}$ respectively by $\abs{c_{i}}$.
  Then, for each such decomposition, an $F_{k}$-structure may be assembled by choosing, for each cycle $c_{i}$, a $\cty{k}$-structure for which $\pbrac*{\prod_{j \in c_{i}} \rho_{j} \pbrac{\sigma}} \cdot \cty{k} \sbrac{\frac{\hat{\pi}_{i}}{\abs{c_{i}}}, \frac{\hat{\tau}_{i}}{\abs{c_{i}}}}$ is an automorphism, then distributing its $\rho$-images around the $\abs{c_{i}}$ positions of the cycle.
  The number of such choices is exactly $\abs{\frac{\pi_{i}}{\abs{c_{i}}}}! \abs{\frac{\tau_{i}}{\abs{c_{i}}}}!$ times the coefficient of the $p_{\frac{\hat{\pi}_{i}}{\abs{c_{i}}}} \sbrac{x} p_{\frac{\hat{\tau}_{i}}{\abs{c_{i}}}} \sbrac{y}$-term of $\gcielt{\symgp{k}}{\cty{k}}{\prod_{j \in c_{i}} \rho_{j} \pbrac{\sigma}}$.
  Accordingly, the total number of $F_{k}$-structures for which $\sigma \cdot \cty{k} \sbrac{\hat{\pi}, \hat{\tau}}$ is an automorphism is exactly $\abs{\pi}! \abs{\tau}!$ times the coefficient of $p_{\pi - \sbrac{1}} \sbrac{x} p_{\tau} \sbrac{y}$ in $\prod_{c \in C \sbrac{\sigma}} \gcielt{\symgp{k}}{\cty{k}}{\prod_{i \in c} \rho_{i} \pbrac{\sigma}} \pbrac*{p_{\abs{c}} \sbrac{x}, p_{2 \abs{c}} \sbrac{x}, \dots; p_{\abs{c} \sbrac{y}, p_{2 \abs{c}} \sbrac{y}, \dots}}$.
    \Cref{eq:ctyfunccif} follows immediately.
\end{proof}

\begin{theorem}[Cycle index of $\ctxy{k}$]
  \label{thm:ctxyfuncci}
  The $\symgp{k+1}$-cycle index for the species $\ctxy{k}$ is given by
  \begin{equation}
    \label{eq:ctxyfuncci}
    \gcielt{\symgp{k+1}}{\ctxy{k}}{\sigma} = p_{1} \sbrac{x} \cdot \smashoperator{\prod_{c \in C \pbrac{\sigma}}} \gci{\symgp{k}}{\cty{k}} \pbrac[\Big]{\prod_{i \in c} \rho_{i} \sbrac{\sigma}} \pbrac*{p_{\abs{c}} \sbrac{x}, p_{2 \abs{c}} \sbrac{x}, \dots, p_{\abs{c}} \sbrac{y}, p_{2 \abs{c}} \sbrac{y}, \dots}.
  \end{equation}
  under the same conditions as \cref{thm:ctyfuncci}.
\end{theorem}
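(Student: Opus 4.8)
The plan is to observe that, by \cref{thm:funcdecompct}, the species $\ctxy{k}$ is isomorphic to $X \cdot \specname{L}_{k+1} \pbrac[\big]{\cty{k}}$, which is structurally identical to the species $F_{k} = X \cdot \specname{L}_{k} \pbrac[\big]{\cty{k}}$ of \cref{thm:ctyfuncci} except that its linear order carries $k+1$ positions rather than $k$ and that it admits an equivariant action of the full group $\symgp{k+1}$. The root linearization of \cref{thm:funcdecompct} places the rooted white vertex last, so the root black vertex of a $\ctxy{k}$-structure behaves exactly like an interior black vertex of an $F_{k}$-structure, save that all $k+1$ of its children are now free to be permuted.

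First I would establish the direct analogue of \cref{lem:ctyinvar} for $\ctxy{k}$ as an $\symgp{k+1}$-species: writing $W_{1}, \dots, W_{k+1}$ for the $\cty{k}$-structures in the linear order, some $\sigma \in \symgp{k+1}$ acts as an automorphism if and only if $\pbrac*{\rho_{i} \sigma} W_{i} \cong W_{\sigma \pbrac{i}}$ for each $i \in \sbrac{k+1}$. The argument is word-for-word that of \cref{lem:ctyinvar}: the root $X$-label is unaffected, $\sigma$ permutes the linear order, and by \cref{thm:rhodef} the permutation $\rho_{i} \pbrac{\sigma}$ is applied to the structure in position $i$. The only change is that $\sigma$ now ranges over all of $\symgp{k+1}$; since the map $\rho_{i}$ of \cref{eq:rhodef} is defined for any such $\sigma$, this is harmless. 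I would pause here to record the key consistency point: for \emph{every} $\sigma \in \symgp{k+1}$ and every $i$, one computes directly from \cref{eq:rhodef} that $\rho_{i} \pbrac{\sigma}$ fixes $k+1$, so each $\rho_{i} \pbrac{\sigma}$, and hence each product $\prod_{i \in c} \rho_{i} \pbrac{\sigma}$, lands in $\symgp{k}$. This is precisely why the descendant $\cty{k}$-structures remain $\symgp{k}$-species and why the recursion closes with $\gci{\symgp{k}}{\cty{k}}$ on the right-hand side despite the enclosing $\symgp{k+1}$-action.

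With this invariance criterion in hand, the combinatorial counting argument from the proof of \cref{thm:ctyfuncci} carries over essentially verbatim. Fixing cycle-type partitions $\pi$ and $\tau$ (recording the black- and white-label automorphism types, respectively) and some $\sigma \in \symgp{k+1}$, I would count the $\ctxy{k}$-structures for which $\sigma \cdot \ctxy{k} \sbrac{\hat{\pi}, \hat{\tau}}$ is an automorphism by decomposing $\pi - \cbrac{1}$ and $\tau$ along the cycles of $\sigma$, now taken as a $\pbrac{k+1}$-permutation. Each cycle $c \in C \pbrac{\sigma}$ forces the $\cty{k}$-structures in its positions to be determined by a single representative, on which $\prod_{j \in c} \rho_{j} \pbrac{\sigma}$ must act as $\pbrac*{\cty{k} \sbrac{\hat{\pi}, \hat{\tau}}}^{-\abs{c}}$; this contributes exactly $\gci{\symgp{k}}{\cty{k}}$ evaluated at $\prod_{i \in c} \rho_{i} \pbrac{\sigma}$ under the power-sum substitutions $p_{\abs{c}}, p_{2\abs{c}}, \dots$ Taking the product over all cycles, together with the leading $p_{1} \sbrac{x}$ for the fixed root $X$-label, yields \cref{eq:ctxyfuncci}.

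Nothing in the derivation of \cref{eq:ctyfunccif} ever used the fact that the acting permutation fixed the last position---the $k$ positions of $F_{k}$ were treated symmetrically throughout---so the passage to $k+1$ positions and to the full group $\symgp{k+1}$ requires no new combinatorial input, and I anticipate no substantive obstacle. The only genuine subtlety is confirming that the $\rho$-consistency of \cref{thm:rhodef}, originally derived for a non-root black vertex, continues to govern the action at the root of a $\ctxy{k}$-structure; this holds precisely because the chosen linearization renders the root black vertex indistinguishable, for the purposes of the action, from an interior black vertex bearing a distinguished final child.
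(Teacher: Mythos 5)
Your proposal is correct and takes essentially the same route as the paper: the paper's own proof of \cref{thm:ctxyfuncci} is a single sentence declaring it ``essentially identical'' to the part of the proof of \cref{thm:ctyfuncci} concerning \cref{eq:ctyfunccif}, and your write-up is precisely that argument transplanted from $k$ positions and $\symgp{k}$ to $k+1$ positions and $\symgp{k+1}$. Your added observation that every $\rho_{i}\pbrac{\sigma}$ fixes $k+1$, so that each product $\prod_{i \in c} \rho_{i}\pbrac{\sigma}$ lies in $\symgp{k}$ and the recursion still closes through $\gci{\symgp{k}}{\cty{k}}$, is exactly the consistency point that makes the transplantation legitimate.
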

\begin{proof}
  The proof is essentially identical to the part of that of \cref{thm:ctyfuncci} which concerns \cref{eq:ctyfunccif}.
\end{proof}

Terms of the form $\prod_{i \in c} \rho_{i} \pbrac{\sigma}$ appear in \cref{eq:ctyfuncci,eq:ctxyfuncci}.
For the simplification of calculations, we note here two useful results about these products.

First, we observe that certain $\rho$-maps preserve cycle structure:
\begin{lemma}
  \label{lem:rhofp}
  Let $\sigma \in \symgp{k}$ be a permutation of which $i \in \sbrac{k}$ is a fixed point.
  Then $\rho_{i} \pbrac{\sigma}$ has the same cycle type as $\sigma$.
\end{lemma}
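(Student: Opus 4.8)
The plan is to exhibit $\rho_{i} \pbrac{\sigma}$ as an honest conjugate of $\sigma$, so that invariance of cycle type under conjugation does all the work. I work throughout inside $\symgp{k+1}$, recalling (as established just above) that $\symgp{k}$ sits in $\symgp{k+1}$ as the stabilizer of $k+1$. For $c \in \cbrac{1, \dots, k+1}$ let $s_{c}$ denote the cyclic shift $a \mapsto a + c$ reduced modulo $k+1$; each $s_{c}$ is a permutation of $\cbrac{1, \dots, k+1}$. Unwinding \cref{eq:rhodef}, the key observation is that for \emph{any} $\sigma$ one has $\rho_{i} \pbrac{\sigma} = s_{\sigma \pbrac{i}}^{-1} \circ \sigma \circ s_{i}$: applying $s_{i}$ sends $a$ to $a + i$, then $\sigma$ is applied, and finally $s_{\sigma \pbrac{i}}^{-1}$ subtracts $\sigma \pbrac{i}$, reproducing exactly $a \mapsto \sigma \pbrac{i + a} - \sigma \pbrac{i}$. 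In general this is a \emph{twisted} affine conjugation, by $s_{i}$ on one side and $s_{\sigma \pbrac{i}}$ on the other, which need not preserve cycle type; recognizing this factorization (and thereby pinpointing what the fixed-point hypothesis buys us) is the conceptual crux of the argument.

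The hypothesis that $i$ is a fixed point of $\sigma$ is precisely what forces the two translations to coincide: since $\sigma \pbrac{i} = i$, we obtain $\rho_{i} \pbrac{\sigma} = s_{i}^{-1} \sigma s_{i}$, a genuine conjugation in $\symgp{k+1}$. The second step is then immediate, since conjugate permutations share a cycle type: $\rho_{i} \pbrac{\sigma}$ and $\sigma$ have equal cycle types as elements of $\symgp{k+1}$.

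The only real obstacle is a bookkeeping subtlety: the claim concerns cycle types of $k$-permutations, whereas the conjugacy just established lives in $\symgp{k+1}$, where both permutations carry a trivial fixed point at $k+1$. To descend the conclusion to $\symgp{k}$ I would verify directly that $\rho_{i} \pbrac{\sigma}$ fixes $k+1$: evaluating the defining formula at $a = k+1$, and using that $i + \pbrac{k+1} \equiv i$ modulo $k+1$ has representative $i$ since $i \in \sbrac{k}$, gives $\rho_{i} \pbrac{\sigma} \pbrac{k+1} = \sigma \pbrac{i} - \sigma \pbrac{i} \equiv 0 \equiv k+1$. As $\sigma$ visibly fixes $k+1$ too, deleting this common fixed point from both permutations shows their cycle types already agree as $k$-permutations, which is exactly the assertion.
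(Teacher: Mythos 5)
Your proposal is correct and is essentially the paper's own argument: the paper likewise introduces the cyclic shift $\theta_{i} \colon a \mapsto a + i$ and observes that the fixed-point hypothesis makes $\rho_{i} \pbrac{\sigma}$ exactly the conjugate of $\sigma$ by this shift, whence equal cycle type. Your additional touches---factoring $\rho_{i}\pbrac{\sigma}$ as a twisted conjugation $s_{\sigma\pbrac{i}}^{-1} \sigma s_{i}$ for general $\sigma$, and checking that $k+1$ is fixed so the conclusion descends from $\symgp{k+1}$ to $\symgp{k}$---are harmless refinements (indeed the latter tidies the paper's slight abuse of writing $\theta_{i} \in \symgp{k}$), not a different route.
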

\begin{proof}
  Let $\theta_{i} \in \symgp{k}$ denote the permutation given by $\theta_{i} \pbrac{a} = a + i$ reduced modulo $k+1$ as in the proof of \cref{thm:rhodef}.
  Then, since $i$ is a fixed point of $\sigma$, we have that, for each $a \in \cbrac{1, 2, \dots, k+1}$,
  \begin{equation*}
    \rho_{i} \pbrac{\sigma} \pbrac{a} = \sigma \pbrac{i+a} - i = \pbrac{\theta_{i} \sigma \theta_{i}^{-1}} \pbrac{a},
  \end{equation*}
  so $\rho_{i} \pbrac{\sigma}$ is a conjugate of $\sigma$ and thus has the same cycle structure as $\sigma$, as desired.
\end{proof}

But then we note that the products in the above theorems are in fact permutations obtained by applying such $\rho$-maps:
\begin{lemma}
  \label{lem:rhoprod}
  Let $\sigma \in \symgp{k}$ be a permutation with a cycle $c$.
  Then the cycle type of $\pbrac*{\prod_{i \in c} \rho_{i} \pbrac{\sigma}}$ is the same as that of $\sigma^{\abs{c}}$.
  In other words, $\lambda \pbrac*{\prod_{i \in c} \rho_{i} \pbrac{\sigma}} = \lambda \pbrac[\big]{\sigma^{\abs{c}}}$.
\end{lemma}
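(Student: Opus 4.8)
The plan is to realize each individual map $\rho_i(\sigma)$ as a conjugate of $\sigma$ by a rotation, and then exploit the cycle structure of $\sigma$ to make the product telescope. Reusing the rotation $\theta_i$, given by $\theta_i(a) = a+i$ reduced modulo $k+1$, from the proof of \cref{lem:rhofp}, I would first verify the factorization
\[
  \rho_i(\sigma) = \theta_{\sigma(i)}^{-1}\,\sigma\,\theta_i,
\]
which is immediate from \cref{eq:rhodef}: applying the right-hand side to $a$ yields $\sigma(a+i) - \sigma(i)$, exactly $\rho_i(\sigma)(a)$. (The same computation showing $\rho_i(\sigma)$ fixes $k+1$ confirms that, although the $\theta$'s themselves rotate all $k+1$ points, each $\rho_i(\sigma)$ lies in $\symgp{k}$.) The rotations are powers of a single cycle and hence commute among themselves, but the essential point is that they do \emph{not} commute with $\sigma$; the needed cancellation will come instead from matching adjacent factors.

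Next I would write the cycle as $c = (i_1\, i_2\, \cdots\, i_m)$ with $\sigma(i_j) = i_{j+1}$ (indices cyclic, $m = \abs{c}$). The factorization then reads $\rho_{i_j}(\sigma) = \theta_{i_{j+1}}^{-1}\,\sigma\,\theta_{i_j}$, so that in the product ordered along the cycle the inner rotation of each factor cancels against the outer rotation of the next:
\[
  \rho_{i_m}(\sigma)\,\rho_{i_{m-1}}(\sigma)\cdots\rho_{i_1}(\sigma) = \theta_{i_{m+1}}^{-1}\,\sigma^{m}\,\theta_{i_1} = \theta_{i_1}^{-1}\,\sigma^{\abs{c}}\,\theta_{i_1},
\]
using $i_{m+1} = i_1$. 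This exhibits $\prod_{i \in c}\rho_i(\sigma)$ as a conjugate of $\sigma^{\abs{c}}$, and since conjugation preserves cycle type we obtain $\lambda\pbrac*{\prod_{i\in c}\rho_i(\sigma)} = \lambda\pbrac[\big]{\sigma^{\abs{c}}}$, as claimed. I would then observe that choosing a different linearization of the cyclic order merely shifts the starting index from $i_1$ to some $i_j$, replacing the conjugating rotation $\theta_{i_1}$ by $\theta_{i_j}$; the result remains a conjugate of $\sigma^{\abs{c}}$, so its cycle type---the only quantity the lemma asserts---is independent of this choice, consistent with the product appearing in \cref{eq:ctyfuncci,eq:ctxyfuncci}.

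The main obstacle I anticipate is pinning down the multiplication convention precisely enough that the telescoping genuinely occurs. The cancellation is one-sided: it relies on $\theta_{\sigma(i_j)} = \theta_{i_{j+1}}$ being exactly the outer rotation of the next factor, which happens only when the factors are composed in the order induced by following $\sigma$ along $c$. A quick check on $\sigma = (1\,2\,3)$ in $\symgp{4}$ shows that composing the $\rho_{i_j}(\sigma)$ in the reverse order produces a permutation whose cycle type differs from that of $\sigma^{\abs{c}}$, so I would state explicitly that the product is taken in the cyclic direction determined by $\sigma$ (equivalently, the linearization is read so that adjacent rotations cancel). Once that direction is fixed, the telescoping and the conclusion are forced.
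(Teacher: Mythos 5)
Your proposal is correct and takes essentially the same route as the paper: both arguments telescope the product along the cycle of $\sigma$ (crucially using $\sigma(i_j) = i_{j+1}$, which the paper's pointwise computation uses implicitly via $c_{2} = \sigma(c_{1})$, etc.) and conclude because conjugation preserves cycle type. The only difference is organizational --- the paper computes the composition pointwise to obtain $\rho_{c_{1}}\pbrac[\big]{\sigma^{\abs{c}}}$ and then invokes \cref{lem:rhofp}, whereas your factorization $\rho_{i}\pbrac{\sigma} = \theta_{\sigma(i)}^{-1}\,\sigma\,\theta_{i}$ (which is \cref{lem:rhofp} extended to non-fixed points) turns the same telescoping into a formal cancellation and exhibits the rotation-conjugate of $\sigma^{\abs{c}}$ directly.
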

\begin{proof}
  Let $c = \pbrac{c_{1}, c_{2}, \dots, c_{\abs{c}}}$.
  First, we calculate:
  \begin{align*}
    \prod_{i = 1}^{\abs{c}} \rho_{c_{i}} \pbrac{\sigma} =& \rho_{c_{\abs{c}}} \pbrac{\sigma} \circ \dots \circ \rho_{c_{2}} \pbrac{\sigma} \circ \rho_{c_{1}} \pbrac{\sigma} \\
  =& \rho_{c_{\abs{c}}} \pbrac{\sigma} \circ \dots \circ \rho_{c_{2}} \pbrac{\sigma} \pbrac{a \mapsto \sigma \pbrac{c_{1} + a} - \sigma \pbrac{c_{1}}} \\
  =& \rho_{c_{\abs{c}}} \pbrac{\sigma} \circ \dots \circ \rho_{c_{3}} \pbrac{\sigma} \pbrac{a \mapsto \sigma \pbrac{c_{2} + \sigma \pbrac{c_{1} + a} - \sigma \pbrac{c_{1}}} - \sigma \pbrac{c_{2}}} \\
  =& \rho_{c_{\abs{c}}} \pbrac{\sigma} \circ \dots \circ \rho_{c_{3}} \pbrac{\sigma} \pbrac{a \mapsto \sigma^{2} \pbrac{c_{1} + a} - \sigma^{2} \pbrac{c_{1}}} \\
  &\vdots \\
  =& a \mapsto \sigma^{\abs{c}} \pbrac{c_{1} + a} - \sigma^{\abs{c}} \pbrac{c_{1}} \\
  =& \rho_{c_{1}} \pbrac{\sigma^{\abs{c}}}.
  \end{align*}
  But $c_{1}$ is a fixed point of $\sigma^{\abs{c}}$, so by the result of \cref{lem:rhofp}, this has the same cycle structure as $\sigma^{\abs{c}}$.
\end{proof}
As a result, we can bypass all calculations of $\rho_{i} \pbrac{\sigma}$ in the computation of terms of the cycle indices we have developed.

We also note an important general fact about the cycle indices of $\Gamma$-species:
\begin{lemma}
  \label{lem:ciclassfunc}
  Let $F$ be a $\Gamma$-species.
  Then $\gcielt{\Gamma}{F}{\gamma}$ is a class function of $\gamma$.
\end{lemma}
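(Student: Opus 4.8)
The plan is to show directly that $\gcielt{\Gamma}{F}{\gamma}$ depends only on the conjugacy class of $\gamma$ in $\Gamma$; that is, that $\gcielt{\Gamma}{F}{\delta \gamma \delta^{-1}} = \gcielt{\Gamma}{F}{\gamma}$ for every $\gamma, \delta \in \Gamma$. By the defining formula \cref{eq:gcidef}, the series $\gcielt{\Gamma}{F}{\gamma}$ is assembled entirely from the integers $\fix \pbrac{\gamma \cdot F \sbrac{\nu}}$, weighted by the monomials $p_{\nu}/n!$ that do not involve $\gamma$ at all. It therefore suffices to prove that, for each $n$ and each $\nu \in \symgp{n}$, the fixed-point count $\fix \pbrac{\gamma \cdot F \sbrac{\nu}}$ is unchanged when $\gamma$ is replaced by any conjugate $\delta \gamma \delta^{-1}$.

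The key input is that $F$ is a $\Gamma$-species, so the action of $\Gamma$ is \emph{natural}: for each $\delta \in \Gamma$ the bijection $\delta \cdot \colon F \sbrac{n} \to F \sbrac{n}$ is equivariant with respect to, hence commutes with, every transport-of-structure map $F \sbrac{\nu}$. Regarding $\gamma \cdot F \sbrac{\nu}$ as a permutation of the finite set $F \sbrac{n}$, and using that the $\Gamma$-action of a product factors as the composite of the individual actions, I would compute
\begin{equation*}
  \pbrac{\delta \gamma \delta^{-1}} \cdot F \sbrac{\nu} = \pbrac{\delta \cdot} \pbrac{\gamma \cdot} \pbrac{\delta^{-1} \cdot} F \sbrac{\nu} = \pbrac{\delta \cdot} \pbrac{\gamma \cdot F \sbrac{\nu}} \pbrac{\delta \cdot}^{-1},
\end{equation*}
where the second equality moves $F \sbrac{\nu}$ leftward past $\pbrac{\delta^{-1} \cdot}$ by naturality and uses $\pbrac{\delta^{-1} \cdot} = \pbrac{\delta \cdot}^{-1}$. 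Thus the permutation $\pbrac{\delta \gamma \delta^{-1}} \cdot F \sbrac{\nu}$ of $F \sbrac{n}$ is conjugate, via the bijection $\delta \cdot$, to the permutation $\gamma \cdot F \sbrac{\nu}$.

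To finish, I would invoke the elementary fact that conjugate permutations of a finite set have the same cycle type, and in particular the same number of fixed points: concretely, a structure $s \in F \sbrac{n}$ is fixed by $\pbrac{\delta \gamma \delta^{-1}} \cdot F \sbrac{\nu}$ if and only if $\pbrac{\delta^{-1} \cdot} s$ is fixed by $\gamma \cdot F \sbrac{\nu}$, so $\delta \cdot$ restricts to a bijection between the two fixed-point sets. Hence $\fix \pbrac[\big]{\pbrac{\delta \gamma \delta^{-1}} \cdot F \sbrac{\nu}} = \fix \pbrac{\gamma \cdot F \sbrac{\nu}}$ for all $n$ and all $\nu$, and summing these equal counts against $p_{\nu}/n!$ yields $\gcielt{\Gamma}{F}{\delta \gamma \delta^{-1}} = \gcielt{\Gamma}{F}{\gamma}$, as required.

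There is no serious obstacle here: the entire content is the naturality of the $\Gamma$-action, which converts conjugation of $\gamma$ inside $\Gamma$ into conjugation of the operator $\gamma \cdot F \sbrac{\nu}$ by a bijection of the structure set $F \sbrac{n}$, and fixed-point counts are a conjugacy invariant of permutations. The only point requiring a little care is the bookkeeping of the operator factorization—verifying that the $\Gamma$-action genuinely commutes with $F \sbrac{\nu}$ on the nose (rather than up to some twist) and that the left-action convention matches the orientation implicit in \cref{eq:gcidef}.
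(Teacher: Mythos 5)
Your proposal is correct. The paper states \cref{lem:ciclassfunc} without proof, so there is no argument of its own to compare against; your derivation---using naturality (equivariance) of the $\Gamma$-action to rewrite $\pbrac{\delta \gamma \delta^{-1}} \cdot F \sbrac{\nu}$ as the conjugate of the permutation $\gamma \cdot F \sbrac{\nu}$ by the bijection $\delta \cdot$, and then invoking the invariance of fixed-point counts under conjugation---is precisely the standard justification that the paper leaves implicit, and it is complete as written.
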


This will simplify computational enumeration of $k$-trees significantly when $k$ is large, since the number of elements of $\symgp{k}$ is factorial in $k$ while the number of conjugacy classes (indexed by partitions) is exponential in $k$.

\subsection{$k$-trees: $\kt{k}$}\label{ss:ktcycind}
We now have all the pieces in hand to apply \cref{thm:dissymkci} to compute the cycle index of the species $\kt{k}$ of general $k$-trees.
\Cref{eq:dissymkci} characterizes the cycle index of the generic $k$-tree species $\kt{k}$ in terms of the cycle indices of the rooted species $\ktx{k}$, $\kty{k}$, and $\ktxy{k}$; \cref{thm:arootquot} gives the cycle indices of these three rooted species in terms of the $\Gamma$-cycle indices $\gci{\symgp{k}}{\cty{k}}$, $\gci{\symgp{k}}{\ctxy{k}}$, and $\gci{\symgp{k+1}}{\ctxy{k}}$; and, finally, \cref{thm:ctyfuncci,thm:ctxyfuncci} give these $\Gamma$-cycle indices explicitly.
By tracing the formulas in \cref{eq:ctyfuncci,eq:ctxyfuncci} back through this sequence of functional relationships, we can conclude:
\begin{theorem}[Cycle index for the species of $k$-trees]
  \label{thm:ktci}
  For $\mathfrak{a}_{k}$ the species of general $k$-trees, $\gci{\symgp{k}}{\cty{k}}$ as in \cref{eq:ctyfuncci}, and $\gci{\symgp{k+1}}{\ctxy{k}}$ as in \cref{eq:ctxyfuncci} we have:
  \label{thm:ktreecyc}
  \begin{subequations}
    \label{eq:ktci}
    \begin{align}
      \ci{\kt{k}} &= \frac{1}{\pbrac{k+1}!} \cdot \smashoperator{\sum_{\sigma \in \symgp{k+1}}} \gcielt{\symgp{k+1}}{\ctxy{k}}{\sigma} + \frac{1}{k!}\cdot \smashoperator{\sum_{\sigma \in \symgp{k}}} \gcielt{\symgp{k}}{\cty{k}}{\sigma} - \frac{1}{k!} \cdot \smashoperator{\sum_{\sigma \in \symgp{k}}} \gcielt{\symgp{k}}{\ctxy{k}}{\sigma} \label{eq:ktciexplicit} \\
      &= \qgci{\symgp{k+1}}{\ctxy{k}} + \qgci{\symgp{k}}{\cty{k}} - \qgci{\symgp{k}}{\ctxy{k}}. \label{eq:ktciquot}
    \end{align}
  \end{subequations}

\end{theorem}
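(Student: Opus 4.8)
The plan is to assemble this cycle index directly by chaining together three previously-established results; at its core the statement is a purely algebraic consequence of the dissymmetry theorem combined with the quotient-species cycle-index formulas, and no new combinatorial input is required.

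First I would invoke \cref{thm:dissymkci}, the cycle-index form of the dissymmetry theorem, which gives
\begin{equation*}
  \ci{\kt{k}} = \ci{\ktx{k}} + \ci{\kty{k}} - \ci{\ktxy{k}}.
\end{equation*}
This reduces the task to expressing each of the three rooted cycle indices on the right-hand side in terms of the $\Gamma$-cycle indices of $\cty{k}$ and $\ctxy{k}$.

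Next I would substitute the three identifications recorded in \cref{thm:aquotci} (that is, \cref{eq:aquotci}), each of which writes a rooted cycle index as a quotient $\Gamma$-cycle index: namely $\ci{\kty{k}} = \qgci{\symgp{k}}{\cty{k}}$, $\ci{\ktxy{k}} = \qgci{\symgp{k}}{\ctxy{k}}$, and $\ci{\ktx{k}} = \qgci{\symgp{k+1}}{\ctxy{k}}$. Inserting these into the displayed equation above yields \cref{eq:ktciquot} at once. Expanding each quotient $\qgci{}{}$ according to its definition in \cref{lem:qsci}, as an average over the relevant symmetric group of $\Gamma$-cycle index evaluations, then produces the fully explicit form \cref{eq:ktciexplicit}.

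The one point requiring genuine care is the bookkeeping of group orders across the three summands. The $\ktx{k}$-term is a quotient by $\symgp{k+1}$ and so averages over $(k+1)!$ elements, whereas the $\kty{k}$- and $\ctxy{k}$-terms are quotients by $\symgp{k}$ and average over $k!$ elements. This asymmetry is exactly what \cref{thm:arootquot} dictates---$\ktx{k}$ treats $\ctxy{k}$ as an $\symgp{k+1}$-species while the other terms treat it as an $\symgp{k}$-species---so I would take care to track which group acts in which term and to use the correct normalizing factor in each. Since the $\Gamma$-cycle indices $\gci{\symgp{k}}{\cty{k}}$ and $\gci{\symgp{k+1}}{\ctxy{k}}$ appearing here are precisely those characterized recursively in \cref{thm:ctyfuncci,thm:ctxyfuncci}, no computational obstacle remains: the resulting expression is fully determined by the earlier functional equations, and the proof amounts to correct substitution and consistent notation rather than any essential difficulty.
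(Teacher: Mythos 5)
Your proposal is correct and follows exactly the paper's own route: the paper likewise obtains the theorem by substituting the quotient identifications of \cref{thm:aquotci} (derived from \cref{thm:arootquot} via \cref{lem:qsci}) into the cycle-index dissymmetry relation of \cref{thm:dissymkci}, with the $\frac{1}{(k+1)!}$ versus $\frac{1}{k!}$ normalizations falling out of which group acts in each term. Nothing further is needed.
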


\Cref{eq:ktci} in fact represents a recursive system of functional equations, since the formulas for the $\Gamma$-cycle indices of $\cty{k}$ and $\ctxy{k}$ are recursive.
Computational methods can yield explicit enumerative results.
However, a bit of care will allow us to reduce the computational complexity of this problem significantly.

\section{Unlabeled enumeration and the generating function $\tilde{\mathfrak{a}}_{k} \pbrac{x}$}\label{ss:ktunlenum}
\Cref{eq:ktci} in \cref{thm:ktci} gives a recursive formula for the cycle index of the (actionless) species $\kt{k}$ of $k$-trees.
The number of unlabeled $k$-trees with $n$ hedra is historically an open problem, but it is straightforward to extract their ordinary generating function from the cycle index $\ci{\kt{k}}$ once it is computed.
Actually computing terms of the cycle index in order to derive the coefficients of the generating function is, however, a computationally expensive process, since the cycle index is by construction a power series in two infinite sets of variables.
The computational process can be simplified significantly by taking advantage of the relatively straightforward combinatorial structure of the structural decomposition used to derive the recursive formulas for the cycle index.

For a $\Gamma$-species $F$, the ordinary generating function $\tilde{F}_{\gamma} \pbrac{x}$ counting unlabeled $\gamma$-invariant $F$-structures is given by
\[\tilde{F} \pbrac[\big]{\gamma} \pbrac{x} = \gcieltvars{\Gamma}{F}{\gamma}{x, x^2, x^3, \dots}\]
and the ordinary generating function for counting unlabeled $\nicefrac{F}{\Gamma}$-structures is given by
\[\tilde{F} \pbrac{x} = \frac{1}{\abs{\Gamma}} \sum_{\gamma \in \Gamma} \tilde{F} \pbrac[\big]{\gamma} \pbrac{x}.\]
These formula admits an obvious multisort extension, but we in fact wish to count $k$-trees with respect to just one sort of label (the $X$-labels on hedra), so we will not deal with multisort issues here.
Each of our two-sort cycle indices can be converted to one-sort by substituting $p_{i} \sbrac{y} = 1$ for all $i$.
For the rest of this section, we will deal directly with these one-sort versions of the cycle indices.

We begin by considering the explicit recursive functional equations in \cref{thm:ctyfuncci,thm:ctxyfuncci}.
In each case, by the above, the ordinary generating function is exactly the result of substituting $p_{i} \sbrac{x} = x^{i}$ and $p_{i} \sbrac{y} = 1$ into the given formula.
Thus, we have:
\begin{theorem}
  \label{thm:ctrhoogf}
  For $\cty{k}$ the $\symgp{k}$-species of $Y$-rooted $k$-coding trees and $\ctxy{k}$ the $\symgp{k+1}$-species of edge-rooted $k$-coding trees, the corresponding single-variable $\Gamma$-ordinary generating functions are given by
  \begin{subequations}
    \label{eq:ctrhoogf}
    \begin{equation}
      \widetilde{\cty{k}} \pbrac{\sigma} \pbrac{x} = \exp \pbrac[\Big]{ \sum_{n \geq 1} \pbrac[\Big]{ \frac{x^{n}}{n} \cdot \smashoperator{\prod_{c \in C \pbrac{\sigma^{n}}}} \widetilde{\cty{k}} \pbrac[\Big]{\prod_{i \in c} \rho_{i} \pbrac{\sigma^{n}}} \pbrac{x^{n \abs{c}}}}} \label{eq:ctyrhoogf}
    \end{equation}
    and
    \begin{equation}
      \widetilde{\ctxy{k}} \pbrac{\sigma} \pbrac{x} = x \cdot \smashoperator{\prod_{c \in C \pbrac{\sigma}}} \widetilde{\cty{k}} \pbrac[\Big]{\prod_{i \in c} \rho_{i} \pbrac{\sigma}} \pbrac[\big]{x^{\abs{c}}}. \label{eq:ctxyrhoogf}
    \end{equation}
  \end{subequations}
  where $\widetilde{\cty{k}}$ is an $\symgp{k}$-generating function and $\widetilde{\ctxy{k}}$ is an $\symgp{k+1}$-generating function.
\end{theorem}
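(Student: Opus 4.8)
The plan is to derive both identities by specializing the $\Gamma$-cycle-index formulas of \cref{thm:ctyfuncci,thm:ctxyfuncci} to ordinary generating functions using the substitution rule recorded at the start of this section, namely $p_i\sbrac{x} = x^i$ and $p_i\sbrac{y} = 1$, so that $\widetilde{F}\pbrac{\gamma}\pbrac{x} = \gcieltvars{\Gamma}{F}{\gamma}{x, x^2, x^3, \dots}$. The one bookkeeping fact I would isolate up front is this: feeding the power-sum arguments $p_{m}\sbrac{x}, p_{2m}\sbrac{x}, p_{3m}\sbrac{x}, \dots$ (with all $y$-arguments equal to $1$) into an expression $\gci{\symgp{k}}{\cty{k}}\pbrac{\tau}$ and then applying the ogf specialization is the same as the composite substitution $p_j\sbrac{x} \mapsto \pbrac{x^m}^j$, which is exactly $\widetilde{\cty{k}}\pbrac{\tau}\pbrac{x^m}$. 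I would prove the edge-rooted identity first, as it is a direct substitution, and then the $Y$-rooted identity, which additionally requires the plethysm formula for the set species.

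For \cref{eq:ctxyrhoogf} I would substitute $p_i\sbrac{x} = x^i$ and $p_i\sbrac{y} = 1$ directly into \cref{eq:ctxyfuncci}. The leading factor $p_1\sbrac{x}$ becomes $x$, and for each cycle $c$ of $\sigma$ the inner factor $\gci{\symgp{k}}{\cty{k}}\pbrac{\prod_{i \in c}\rho_i\pbrac{\sigma}}$ is evaluated at the arguments $p_{\abs{c}}\sbrac{x}, p_{2\abs{c}}\sbrac{x}, \dots$ with $1$'s on the $y$-side. By the bookkeeping fact with $m = \abs{c}$, this factor is precisely $\widetilde{\cty{k}}\pbrac{\prod_{i \in c}\rho_i\pbrac{\sigma}}\pbrac{x^{\abs{c}}}$, and taking the product over all cycles yields \cref{eq:ctxyrhoogf} at once.

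For \cref{eq:ctyrhoogf} the extra ingredient is the $\Gamma$-species plethysm of \cite[\S 3]{hend:specfield} applied to the set species $\specname{E}$ carrying the trivial action: for a $\Gamma$-species $G$ one has $\gcielt{\Gamma}{\specname{E} \circ G}{\gamma} = \exp\pbrac{\sum_{n \geq 1}\frac{1}{n}\,\gcielt{\Gamma}{G}{\gamma^n}\sbrac{p_n, p_{2n}, \dots}}$, where the bracket denotes the plethystic substitution $p_j \mapsto p_{jn}$ and the $n$th power of $\gamma$ enters because traversing a length-$n$ block-cycle of the set returns a block to itself only after applying $\gamma$ exactly $n$ times. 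Applying this with $G = F_k$ and $\gamma = \sigma$ inside \cref{eq:ctyfunccigci}, and noting that $\gci{\symgp{k}}{Y}$ contributes only $p_1\sbrac{y} \mapsto 1$, I would push the two successive substitutions through \cref{eq:ctyfunccif} evaluated at $\sigma^n$: the plethystic stage sends the leading $p_1\sbrac{x}$ to $p_n\sbrac{x}$ and each inner argument $p_{m\abs{c}}\sbrac{x}$ to $p_{mn\abs{c}}\sbrac{x}$, after which the ogf stage sends $p_n\sbrac{x}$ to $x^n$ and, by the bookkeeping fact now with $m = n\abs{c}$, sends each inner factor to $\widetilde{\cty{k}}\pbrac{\prod_{i \in c}\rho_i\pbrac{\sigma^n}}\pbrac{x^{n\abs{c}}}$. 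Assembling these inside the exponential produces exactly \cref{eq:ctyrhoogf}.

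The two-stage substitution bookkeeping is mechanical once the specialization rule is fixed. The step demanding genuine care, and the place where I would expect any error to hide, is the correct invocation of the $\Gamma$-species plethysm formula for $\specname{E}$—specifically the appearance of $\sigma^n$ rather than $\sigma$ in the inner $\Gamma$-cycle indices. This is precisely what forces the $C\pbrac{\sigma^n}$ and $\rho_i\pbrac{\sigma^n}$ in \cref{eq:ctyrhoogf}, and it is the feature distinguishing the $Y$-rooted formula from the purely substitutional edge-rooted one.
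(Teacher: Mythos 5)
Your proposal is correct and follows essentially the same route as the paper, which obtains \cref{thm:ctrhoogf} directly by substituting $p_{i}\sbrac{x} = x^{i}$ and $p_{i}\sbrac{y} = 1$ into the cycle-index formulas of \cref{thm:ctyfuncci,thm:ctxyfuncci}. The paper leaves the details implicit, and you have correctly supplied the one nontrivial point it glosses over: unwinding the $\Gamma$-species plethysm with $\specname{E}$, whose evaluation at $\sigma^{n}$ inside the exponential is exactly what produces the $C\pbrac{\sigma^{n}}$ and $\rho_{i}\pbrac{\sigma^{n}}$ appearing in \cref{eq:ctyrhoogf}.
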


However, as a consequence of \cref{lem:rhoprod,lem:ciclassfunc}, we can simplify these expressions significantly.
Let $\lambda$ be the function mapping each permutation to its cycle type interpreted as an integer partition.
Then:
\begin{corollary}
  \label{cor:ctogf}
  For $\cty{k}$ the $\symgp{k}$-species of $Y$-rooted $k$-coding trees and $\ctxy{k}$ the $\symgp{k+1}$-species of edge-rooted $k$-coding trees, the corresponding single-variable $\Gamma$-ordinary generating functions are given by
  \begin{subequations}
    \label{eq:ctogf}
    \begin{equation}
      \widetilde{\cty{k}} \pbrac{\lambda} \pbrac{x} = \exp \pbrac[\Big]{\sum_{n \geq 1} \pbrac[\Big]{ \frac{x^{n}}{n} \cdot \prod_{i \in \lambda^{n}} \widetilde{\cty{k}} \pbrac[\big]{\lambda^{n i}} \pbrac[\big]{x^{n i}}}} \label{eq:ctyogf}
    \end{equation}
    and
    \begin{equation}
      \widetilde{\ctxy{k}} \pbrac{\lambda} \pbrac{x} = x \cdot \prod_{i \in \lambda} \widetilde{\cty{k}} \pbrac[\big]{\lambda^{i} - \cbrac{1}} \pbrac[\big]{x^{i}} \label{eq:ctxyogf}
    \end{equation}
  \end{subequations}
  where $\prod_{i \in \lambda}$ denotes a product over the parts $i$ of $\lambda$ taken with multiplicity, where $\lambda^{i}$ denotes the $i$th `partition power' of $\lambda$ --- that is, if $\sigma$ is any permutation of cycle type $\lambda$, then $\lambda^{i}$ denotes the cycle type of $\sigma^{i}$ --- and where $f \pbrac{\lambda} \pbrac{x}$ denotes the value of $f \pbrac{\sigma} \pbrac{x}$ for any $\sigma$ of cycle type $\lambda$.
\end{corollary}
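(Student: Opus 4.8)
The plan is to derive \cref{cor:ctogf} directly from \cref{thm:ctrhoogf}, using only the two structural lemmas \cref{lem:ciclassfunc} and \cref{lem:rhoprod}; no new combinatorial input is needed, so the content is entirely a careful reindexing of cycle types. First I would invoke \cref{lem:ciclassfunc}: since $\gcielt{\symgp{k}}{\cty{k}}{\sigma}$ is a class function of $\sigma$, so is the single-variable $\Gamma$-ordinary generating function obtained from it by setting $p_{i}\sbrac{x} = x^{i}$ and $p_{i}\sbrac{y} = 1$. Hence $\widetilde{\cty{k}}\pbrac{\sigma}\pbrac{x}$ depends on $\sigma$ only through its cycle type, which both justifies the notation $\widetilde{\cty{k}}\pbrac{\lambda}\pbrac{x}$ and, crucially, lets me replace each permutation argument $\prod_{i \in c}\rho_{i}\pbrac{\sigma}$ appearing in \cref{eq:ctrhoogf} by its cycle type alone.

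Next I would identify those cycle types with \cref{lem:rhoprod}, which gives $\lambda\pbrac*{\prod_{i \in c}\rho_{i}\pbrac{\sigma}} = \lambda\pbrac[\big]{\sigma^{\abs{c}}}$ for each cycle $c$ of $\sigma$; the telescoping computation in its proof uses only arithmetic modulo $k+1$, so the identity holds whether $\sigma$ is read in $\symgp{k}$ or in $\symgp{k+1}$. For \cref{eq:ctyogf}, where $\sigma \in \symgp{k}$, I would apply this to $\sigma^{n}$: a cycle $c \in C\pbrac{\sigma^{n}}$ has length equal to a part $i$ of $\lambda^{n} = \lambda\pbrac{\sigma^{n}}$, and then $\prod_{j \in c}\rho_{j}\pbrac{\sigma^{n}}$ has cycle type $\lambda\pbrac[\big]{\pbrac{\sigma^{n}}^{i}} = \lambda^{ni}$. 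Reindexing the product over cycles of $\sigma^{n}$ as a product over the parts $i$ of $\lambda^{n}$ (taken with multiplicity) and rewriting the exponent $x^{n\abs{c}}$ as $x^{ni}$ then turns \cref{eq:ctyrhoogf} into \cref{eq:ctyogf}.

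The one genuinely delicate point, and the step I expect to demand the most care, is the accounting for the fixed point $k+1$, which is exactly what produces the $-\cbrac{1}$ in \cref{eq:ctxyogf} and its absence in \cref{eq:ctyogf}. The arguments of $\widetilde{\cty{k}}$ must be $\symgp{k}$-cycle types, i.e.\ partitions of $k$, whereas $\prod_{i \in c}\rho_{i}\pbrac{\sigma}$ fixes $k+1$ by construction. In the $\cty{k}$ case $\sigma$ already lies in $\symgp{k}$ and so fixes $k+1$; the extra fixed point is then shared by $\sigma^{ni}$ and by $\prod_{j \in c}\rho_{j}\pbrac{\sigma^{n}}$ and simply cancels, leaving the clean partition power $\lambda^{ni}$. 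In the $\ctxy{k}$ case, by contrast, $\sigma \in \symgp{k+1}$ need not fix $k+1$, so $\sigma^{\abs{c}}$ carries the genuine $\symgp{k+1}$-cycle type $\lambda^{\abs{c}}$ (a partition of $k+1$), while $\prod_{i \in c}\rho_{i}\pbrac{\sigma}$, read in $\symgp{k}$, has that cycle type with the part recording the fixed point $k+1$ deleted, namely $\lambda^{\abs{c}} - \cbrac{1}$. Substituting this and reindexing the product over cycles $c$ of $\sigma$ as a product over parts $i = \abs{c}$ of $\lambda$ converts \cref{eq:ctxyrhoogf} into \cref{eq:ctxyogf}. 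Beyond this fixed-point bookkeeping the derivation is routine, so the resulting proof is short.
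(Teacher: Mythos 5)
Your proposal is correct and follows exactly the route the paper intends: the paper presents \cref{cor:ctogf} as an immediate consequence of \cref{thm:ctrhoogf} combined with \cref{lem:rhoprod} and \cref{lem:ciclassfunc}, which is precisely your argument of replacing each permutation argument by its cycle type and identifying $\lambda\pbrac[\big]{\prod_{i \in c} \rho_{i}\pbrac{\sigma}}$ with $\lambda\pbrac[\big]{\sigma^{\abs{c}}}$. Your explicit bookkeeping of the fixed point $k+1$ --- explaining why the argument in \cref{eq:ctxyogf} is $\lambda^{i} - \cbrac{1}$ (a partition of $k$ obtained from a partition of $k+1$) while in \cref{eq:ctyogf} the extra fixed point simply cancels --- is the one detail the paper leaves implicit, and you have it right.
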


To clarify the notation, we work out the case $k = 2$ more explicitly here:
\begin{example}[$k = 2$]
  There are only two partitions of $k = 2$: $\cbrac{1, 1}$ and $\cbrac{2}$, corresponding to the two permutations $\pmt{(1)(2)}$ and $\pmt{(12)}$ respectively.
  Since $\cbrac{1, 1}^{i} = \cbrac{1, 1}$ for any $i$, we have that
  \[ \widetilde{\cty{2}} \pbrac{\cbrac{1, 1}} \pbrac{x} = \exp \pbrac*{ \sum_{n \geq 1} \frac{x^{n}}{n} \widetilde{\cty{2}} \pbrac{\cbrac{1, 1}} \pbrac{x^{n}}^{2}}. \]

  The situation for $\cbrac{2}$ is slightly more complex, since $\cbrac{2}^{i} = \cbrac{1, 1}$ if $i$ is even but $\cbrac{2}$ if $i$ is odd.
  Thus, we have
  \[ \widetilde{\cty{2}} \pbrac{\cbrac{2}} \pbrac{x} = \exp \pbrac*{ \sum_{n \geq 1} \pbrac*{\frac{x^{2n-1}}{2n-1} \widetilde{\cty{2}} \pbrac{\cbrac{2}} \pbrac{x^{2n-1}}^{2}} + \pbrac*{\frac{x^{2n}}{2n} \widetilde{\cty{2}} \pbrac{\cbrac{1, 1}} \pbrac{x^{2n}}^{2}}}. \]

  Conventional computational techniques (such as those demonstrated in \cref{c:code}) then suffice to compute that
  \begin{gather*}
    \widetilde{\cty{2}} \pbrac{\cbrac{1, 1}} \pbrac{x} = 1 + x + 3x^{2} + 10x^{3} + 39x^{4} + 160x^{5} + \dots \\
    \intertext{and}
    \widetilde{\cty{2}} \pbrac{\cbrac{2}} \pbrac{x} = 1 + x + x^{2} + 2x^{3} + 3x^{4} + 6x^{5} + \dots.
  \end{gather*}
\end{example}

\Cref{cor:ctogf} characterizes the ordinary generating functions $\widetilde{\cty{k}}$ and $\widetilde{\ctxy{k}}$.
The cycle index of the species $\kt{k}$, as seen in \cref{eq:ktci}, is given simply in terms of quotients of the cycle indices of the two $\Gamma$-species $\cty{k}$ and $\ctxy{k}$, and this result can pass to the generating-function level.
Thus, we also have:
\begin{theorem}
  \label{thm:akrhoogf}
  For $\kt{k}$ the species of $k$-trees and $\widetilde{\cty{k}}$ and $\widetilde{\ctxy{k}}$ as in \cref{thm:ctrhoogf}, we have
  \begin{equation}
    \label{eq:akrhoogf}
    \tilde{\mathfrak{a}}_{k} \pbrac{x} = \frac{1}{\pbrac{k+1}!} \cdot \smashoperator{\sum_{\sigma \in \symgp{k+1}}} \widetilde{\ctxy{k}} \pbrac{\sigma} \pbrac{x} + \frac{1}{k!} \cdot \smashoperator{\sum_{\sigma \in \symgp{k}}} \widetilde{\cty{k}} \pbrac{\sigma} \pbrac{x} - \frac{1}{k!} \cdot \smashoperator{\sum_{\sigma \in \symgp{k}}} \widetilde{\ctxy{k}} \pbrac{\sigma} \pbrac{x}.
  \end{equation}
\end{theorem}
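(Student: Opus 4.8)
The plan is to obtain \cref{eq:akrhoogf} directly from the cycle-index identity of \cref{thm:ktci} by applying the specialization that converts a cycle index into an ordinary generating function. Recall that for an actionless species $F$ one has $\tilde{F} \pbrac{x}$ equal to $\ci{F}$ evaluated at $p_{i} = x^{i}$ (having already collapsed the $y$-sort via $p_{i} \sbrac{y} = 1$, as arranged at the start of this section). Since \cref{eq:ktciexplicit} expresses $\ci{\kt{k}}$ as a rational linear combination of the three sums $\frac{1}{\pbrac{k+1}!} \sum_{\sigma} \gcielt{\symgp{k+1}}{\ctxy{k}}{\sigma}$, $\frac{1}{k!} \sum_{\sigma} \gcielt{\symgp{k}}{\cty{k}}{\sigma}$, and $\frac{1}{k!} \sum_{\sigma} \gcielt{\symgp{k}}{\ctxy{k}}{\sigma}$, my first step is simply to apply this specialization to both sides of \cref{eq:ktciexplicit}.

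The second step is to observe that the specialization $p_{i} \sbrac{x} \mapsto x^{i}$, $p_{i} \sbrac{y} \mapsto 1$ is a ring homomorphism on the ring of cycle indices, and in particular is $\ringname{Q}$-linear. It therefore distributes over the finite sums and commutes with the scalar factors $\frac{1}{\pbrac{k+1}!}$ and $\frac{1}{k!}$, so the specialized right-hand side is precisely the same rational linear combination of the specialized individual terms $\gcielt{\Gamma}{F}{\sigma}$.

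The third step is to identify each specialized term. By the $\Gamma$-species analogue of the cycle-index-to-ogf passage recalled at the start of this section (namely $\widetilde{F} \pbrac{\sigma} \pbrac{x} = \gcieltvars{\Gamma}{F}{\sigma}{x, x^{2}, x^{3}, \dots}$), each term $\gcielt{\symgp{k+1}}{\ctxy{k}}{\sigma}$ specializes to $\widetilde{\ctxy{k}} \pbrac{\sigma} \pbrac{x}$, and likewise the $\symgp{k}$-indexed terms specialize to $\widetilde{\cty{k}} \pbrac{\sigma} \pbrac{x}$ and $\widetilde{\ctxy{k}} \pbrac{\sigma} \pbrac{x}$. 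Substituting these identifications into the specialized identity yields exactly \cref{eq:akrhoogf}.

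There is no substantial obstacle here: the content of the theorem is carried entirely by \cref{thm:ktci}, and the passage to generating functions is a formal specialization. The only point requiring care is to confirm that the combined two-sort substitution $p_{i} \sbrac{x} = x^{i}$, $p_{i} \sbrac{y} = 1$ sends each two-sort $\Gamma$-cycle index to the correct single-variable $\Gamma$-ogf and that this passage is compatible with the Burnside-type averaging $\qgci{\Gamma}{F}$ appearing in \cref{eq:ktciquot}; but since that averaging is merely a rational linear combination over the finite groups $\symgp{k}$ and $\symgp{k+1}$ and the specialization is linear, the two operations commute and the claim follows.
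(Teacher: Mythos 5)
Your proposal is correct and takes essentially the same route as the paper: the paper obtains \cref{eq:akrhoogf} by passing the cycle-index identity \cref{eq:ktci} to the generating-function level via the substitution $p_{i} \sbrac{x} = x^{i}$, $p_{i} \sbrac{y} = 1$ set up at the start of the section, which is exactly your formal specialization argument together with its linearity over the finite sums. No gap to report.
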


Then, as a consequence of \cref{lem:ciclassfunc,cor:ctogf}, we can instead write
\begin{corollary}[Generating function for unlabeled $k$-trees]
  \label{cor:ktgf}
  For $\kt{k}$ the species of $k$-trees and $\widetilde{\cty{k}}$ and $\widetilde{\ctxy{k}}$ as in \cref{cor:ctogf}, we have
  \begin{equation}
    \label{eq:ktgf}
    \tilde{\mathfrak{a}}_{k} \pbrac{x} = \sum_{\lambda \vdash k+1} \frac{1}{z_{\lambda}} \widetilde{\ctxy{k}} \pbrac{\lambda} \pbrac{x} + \sum_{\lambda \vdash k} \frac{1}{z_{\lambda}} \widetilde{\cty{k}} \pbrac{\lambda} \pbrac{x} - \sum_{\lambda \vdash k} \frac{1}{z_{\lambda}} \widetilde{\ctxy{k}} \pbrac{\lambda \cup \cbrac{1}} \pbrac{x}.
  \end{equation}
\end{corollary}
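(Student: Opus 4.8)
The plan is to derive this corollary from \cref{thm:akrhoogf} by replacing each of the three sums over symmetric-group elements by a sum over conjugacy classes, equivalently over integer partitions. The only analytic ingredient needed is the standard counting identity: for any class function $f$ on $\symgp{m}$,
\[
  \frac{1}{m!} \sum_{\sigma \in \symgp{m}} f \pbrac{\sigma} = \sum_{\lambda \vdash m} \frac{1}{z_{\lambda}} f \pbrac{\lambda},
\]
which holds because $\symgp{m}$ contains exactly $m!/z_{\lambda}$ permutations of cycle type $\lambda$, so each conjugacy class contributes $\pbrac{m!/z_{\lambda}} f \pbrac{\lambda}$ to the left-hand sum. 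Here $f \pbrac{\lambda}$ denotes the common value of $f$ on the class indexed by $\lambda$, exactly the partition-indexed notation introduced in \cref{cor:ctogf}.

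First I would verify that the summands in \cref{eq:akrhoogf} are genuinely class functions of $\sigma$. This is immediate from \cref{lem:ciclassfunc}: the $\Gamma$-cycle indices $\gcielt{\symgp{k}}{\cty{k}}{\sigma}$, $\gcielt{\symgp{k}}{\ctxy{k}}{\sigma}$, and $\gcielt{\symgp{k+1}}{\ctxy{k}}{\sigma}$ are class functions, and the passage to the single-variable ordinary generating function is effected by the uniform substitution $p_{i} \sbrac{x} = x^{i}$, $p_{i} \sbrac{y} = 1$, which preserves the property of depending only on the conjugacy class. Applying the counting identity termwise then converts the first sum (over $\symgp{k+1}$, normalized by $1/\pbrac{k+1}!$) into $\sum_{\lambda \vdash k+1} z_{\lambda}^{-1} \widetilde{\ctxy{k}} \pbrac{\lambda} \pbrac{x}$, since here $\ctxy{k}$ carries its $\symgp{k+1}$-action and $\sigma$ ranges over all cycle types $\lambda \vdash k+1$; and it converts the second sum (over $\symgp{k}$) into $\sum_{\lambda \vdash k} z_{\lambda}^{-1} \widetilde{\cty{k}} \pbrac{\lambda} \pbrac{x}$. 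These reproduce the first two terms of \cref{eq:ktgf} verbatim.

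The step requiring the most care, and the main obstacle, is the third term, where the distinction between the $\symgp{k}$- and $\symgp{k+1}$-species structures on $\ctxy{k}$ must be tracked precisely. In \cref{eq:axyquotci} (and hence in \cref{eq:akrhoogf}), $\ctxy{k}$ is treated as an $\symgp{k}$-species, so the sum runs over $\sigma \in \symgp{k}$ with normalization $1/k!$; the counting identity therefore produces $\sum_{\lambda \vdash k} z_{\lambda}^{-1} \widetilde{\ctxy{k}} \pbrac{\sigma} \pbrac{x}$ with $\sigma$ of cycle type $\lambda \vdash k$ and $z_{\lambda}$ computed as the centralizer size for a partition of $k$. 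However, the defining formula \cref{eq:ctxyfuncci} for $\widetilde{\ctxy{k}} \pbrac{\sigma}$ reads the cycle structure of $\sigma$ as an element of $\symgp{k+1}$ fixing $k+1$, and such a permutation has cycle type $\lambda \cup \cbrac{1}$ in $\symgp{k+1}$. Hence the class-function value is $\widetilde{\ctxy{k}} \pbrac{\lambda \cup \cbrac{1}} \pbrac{x}$, giving the third term $\sum_{\lambda \vdash k} z_{\lambda}^{-1} \widetilde{\ctxy{k}} \pbrac{\lambda \cup \cbrac{1}} \pbrac{x}$ with the adjoined fixed point accounting for the distinguished root front. Assembling the three rewritten sums yields \cref{eq:ktgf}, completing the argument.
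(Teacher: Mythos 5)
Your proposal is correct and follows essentially the same route as the paper: the paper obtains \cref{cor:ktgf} from \cref{thm:akrhoogf} precisely by invoking \cref{lem:ciclassfunc} to group each sum over permutations into conjugacy classes weighted by $1/z_{\lambda}$, with the partition-indexed notation of \cref{cor:ctogf}. Your careful handling of the third term---recognizing that $\sigma \in \symgp{k}$ acts on $\ctxy{k}$ via its lift to $\symgp{k+1}$ fixing $k+1$, so its relevant cycle type is $\lambda \cup \cbrac{1}$ while $z_{\lambda}$ remains the centralizer order in $\symgp{k}$---is exactly the point the paper's notation encodes.
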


This direct characterization of the ordinary generating function of unlabeled $k$-trees, while still recursive, is much simpler computationally than the characterization of the full cycle index in \cref{eq:ktci}.
For computation of the number of unlabeled $k$-trees, it is therefore much preferred.
Classical methods for working with recursively-defined power series suffice to extract the coefficients quickly and efficiently.
The results of some such explicit calculations are presented in \cref{c:enum}.

\section{Special-case behavior for small $k$}
Many of the complexities of the preceding analysis apply only for $k \geq 3$.
In the cases $k = 1$ and $k = 2$, our analysis simplifies dramatically, and effectively reduces to previous work.

\subsection{Ordinary trees ($k = 1$)}
When $k = 1$, an $\kt{k}$-structure is merely an ordinary tree with $X$-labels on its edges and $Y$-labels on its vertices.
There is no internal symmetry of the form that the actions of $\symgp{k}$ are intended to break.
The actions of $\symgp{2}$ act on ordinary trees rooted at a \emph{directed} edge, with the nontrivial element $\tau \in \symgp{2}$ acting by reversing this orientation.
The resulting decomposition from the dissymmetry theorem in \cref{thm:dissymk} and the recursive functional equations of \cref{thm:funcdecompct} then clearly reduce to the classical dissymmetry analysis of ordinary trees.

\subsection{$2$-trees}
When $k=2$, there is a nontrivial symmetry at fronts (edges); two triangles may be joined at an edge in two distinct ways.
The imposition of a coherent orientation on a $2$-tree by directing one of its edges breaks this symmetry; the action of $\symgp{2}$ by reversal of these orientations gives unoriented $2$-trees as its orbits.
The defined action of $\symgp{3}$ on edge-rooted oriented triangles is simply the classical action of the dihedral group $D_{6}$ on a triangle, and its orbits are unoriented, unrooted triangles.
We further note that $\rho_{i}$ is the trivial map on $\symgp{2}$ and that $\rho_{i} \pbrac{\sigma} = \pmt{(12)}$ for $\sigma \in \symgp{3}$ if and only if $\sigma$ is an odd permutation, both regardless of $i$.
We then have that:
\begin{subequations}
  \label{eq:rest2trees}
  \begin{equation}
    \gci{\symgp{2}}{\cty{2}} = p_{1} \sbrac{y} \cdot \ci{\specname{E}} \circ \pbrac[\Big]{p_{1} \sbrac{x} \cdot \smashoperator{\prod_{c \in C \pbrac{\sigma}}} \gcieltvars{\symgp{2}}{\cty{2}}{e}{p_{\abs{c}} \sbrac{x}, p_{2 \abs{c}} \sbrac{x}, \dots; p_{\abs{c}} \sbrac{y}, p_{2 \abs{c}} \sbrac{y}, \dots}} \label{eq:ctyfuncci2}
  \end{equation}
  and
  \begin{equation}
    \gci{\symgp{3}}{\ctxy{2}} = p_{1} \sbrac{x} \cdot \smashoperator{\prod_{c \in C \pbrac{\sigma}}} \gci{\symgp{2}}{\cty{2}} \pbrac[\big]{\rho \pbrac{\sigma}^{\abs{c}}} \pbrac*{p_{\abs{c}} \sbrac{x}, p_{2 \abs{c}} \sbrac{x}, \dots; p_{\abs{c}} \sbrac{y}, p_{2 \abs{c}} \sbrac{y}, \dots}. \label{eq:ctxyfuncci2}
  \end{equation}
\end{subequations}
where, by abuse of notation, we let $\rho$ represent any $\rho_{i}$.
By the previous, the argument $\rho \pbrac{\sigma}^{\abs{c}}$ in \cref{eq:ctxyfuncci2} is $\tau$ if and only if $\sigma$ is an odd permutation and $c$ is of odd length.
This analysis and the resulting formulas for the cycle index $\ci{\kt{2}}$ are essentially equivalent to those derived in \cite{gessel:spec2trees}.

\appendix
\section{Enumerative tables}\label{c:enum}
With the recursive functional equations for cycle indices of \cref{s:ktcycind}, we can calculate the explicit cycle index for the species $\kt{k}$ to any finite degree we choose using computational methods; this cycle index can then be used to enumerate both unlabeled and labeled (at fronts, hedra, or both) $k$-trees up to a specified number $n$ of hedra (or, equivalently, $kn + 1$ fronts).
We have done so here for $k \leq 7$ and $n \leq 30$ using Sage 5.0 \cite{sage} using code available in \cref{c:code}.
The resulting values appear in \cref{tab:ktrees}.

We note that both unlabeled and hedron-labeled enumerations of $k$-trees stabilize in $k$:
\begin{theorem}
  \label{thm:ktreestab}
  For $k \geq n - 2$, the numbers of unlabeled and hedron-labeled $k$-trees with $n$ hedra are independent of $k$.
\end{theorem}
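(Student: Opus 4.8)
The plan is to exhibit, for each $k$, an explicit bijection between the $n$-hedron $k$-trees and the $n$-hedron $(k+1)$-trees that is valid precisely when $k \geq n-2$; chaining these bijections then forces the count to be eventually constant in $k$. Write $a_k(n)$ for the number of unlabelled $k$-trees with $n$ hedra. The bijection is the \emph{coning} (apex) construction $\Phi$: given a $k$-tree $T$, let $\Phi(T)$ be the graph obtained from $T$ by adjoining a single new vertex adjacent to every vertex of $T$.

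First I would record the basic properties of coning as a lemma: if $w$ is a universal vertex of a graph $W$ and $T = W - w$, then $W$ is a $(k+1)$-tree if and only if $T$ is a $k$-tree, and in that case $H \mapsto H \cup \{w\}$ is a bijection from the hedra of $T$ to those of $W$. In particular the number of hedra is preserved and hedron-labels transport across $\Phi$ verbatim. The forward direction follows by induction on the number of hedra, matching each attachment of a new vertex to a front $f$ of $T$ with the attachment of that same vertex to the front $f \cup \{w\}$ of $W$; the reverse direction simply runs this in reverse, using that a universal vertex lies in every maximal clique, hence in every front that is actually used.

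Next comes injectivity of $\Phi$ on isomorphism classes: to recover $T$ from $\Phi(T)$ one deletes a universal vertex, and this is well defined because any two universal vertices $w, w'$ of a graph are twins ($N(w) \cup \{w\} = N(w') \cup \{w'\} = V$), so the transposition swapping them is an automorphism and $W - w \cong W - w'$. Thus $\Phi$ is injective for every $k$, giving $a_{k+1}(n) \geq a_k(n)$ unconditionally; the content is in the reverse inequality.

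The crux, and the only place the hypothesis is used, is surjectivity: I must show that every $(k+1)$-tree with $n$ hedra arises as a cone, i.e.\ has a universal vertex, exactly when $k \geq n-2$. I would prove the sharper quantitative claim that for any $k$-tree with hedra $H_1, \dots, H_n$ one has $\left|\bigcap_{i} H_i\right| \geq (k+1) - (n-1)$, by induction on $n$: peel off the apex $a$ of a leaf hedron $H_{\mathrm{leaf}} = S \cup \{a\}$ of the clique tree (with $S$ the separating front shared with its neighbour), apply the inductive bound to the resulting $(n-1)$-hedron $k$-tree, and observe that intersecting with $H_{\mathrm{leaf}}$ is the same as intersecting with the $k$-element front $S$ (since $a$ is absent from the smaller intersection), which drops the size by at most one. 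Applying this bound to a $(k+1)$-tree, whose hedra have size $k+2$ and whose separators have size $k+1$, yields $\left|\bigcap_{i} H_i\right| \geq (k+2)-(n-1) = k-n+3$, which is positive precisely when $k \geq n-2$; any vertex in this intersection is universal. Hence for $k \geq n-2$ the map $\Phi$ is a bijection of isomorphism classes, and likewise of hedron-labelled structures, so $a_k(n) = a_{k+1}(n)$ and the common value is independent of $k$ throughout that range. I expect the main obstacle to be making this induction self-sustaining: the naive assertion ``some vertex is universal'' does not survive the removal of a leaf hedron, which forces one to track the full intersection size and prove the stronger linear lower bound — and it is exactly the slope of that bound that pins the threshold at $n-2$.
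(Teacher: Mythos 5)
Your proposal is correct and follows essentially the same route as the paper: the coning (universal-vertex) construction giving a hedron-preserving bijection between $k$-trees and $(k+1)$-trees with at most $k+2$ hedra. In fact your write-up is more complete than the paper's, which merely asserts that a $(k+1)$-tree with at most $k+2$ hedra has a vertex common to all hedra and that the choice of such a vertex is immaterial "by symmetry"; your quantitative bound $\left|\bigcap_i H_i\right| \geq (k+2)-(n-1)$ proved by peeling leaf hedra, and your twin-vertex argument for well-definedness, supply rigorous proofs of exactly these two asserted steps.
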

\begin{proof}
  We show that the species $\kt{k}$ and $\kt{k+1}$ have contact up to order $k+2$ by explicitly constructing a natural bijection.
  We note that in a $\pbrac{k+1}$-tree with no more than $k+2$ hedra, there will exist at least one vertex which is common to \emph{all} hedra.
  For any $k$-tree with no more than $k+2$ hedra, we can construct a $\pbrac{k+1}$-tree with the same number of hedra by adding a single vertex and connecting it by edges to every existing vertex; we can then pass labels up from the $\pbrac{k+1}$-cliques which are the hedra of the $k$-tree to the $\pbrac{k+2}$-cliques which now sit over them.
  The resulting graph will be a $\pbrac{k+1}$-tree whose $\pbrac{k+1}$-tree hedra are adjacent exactly when the $k$-tree hedra they came from were adjacent.
  Therefore, any two distinct $k$-trees will pass to distinct $\pbrac{k+1}$-trees.
  Similarly, for any $\pbrac{k+1}$-tree with no more than $k+2$ hedra, choose one of the vertices common to all the hedra and remove it, passing the labels of $\pbrac{k+1}$-tree hedra down to the $k$-tree hedra constructed from them; again, adjacency of hedra is preserved.
  This of course creates a $k$-tree, and for distinct $\pbrac{k+1}$-trees the resulting $k$-trees will be distinct.
  Moreover, by symmetry the result is independent of the choice of common vertex, in the case there is more than one.
\end{proof}
However, thus far we have neither determined a direct method for computing these stabilization numbers nor identified a straightforward combinatorial characterization of the structures they represent.

\clearpage
\begin{table}[htb]
  \centering
  \caption{Enumerative data for $k$-trees with $n$ hedra}
  \label{tab:ktrees}

  \subfloat[$k = 1$]{
    \label{tab:1trees}
    \begin{tabular}{r | r}
      $n$ & Unlabeled $1$-trees \\ \hline
      0 & 1 \\
      1 & 1 \\
      2 & 1 \\
      3 & 2 \\
      4 & 3 \\
      5 & 6 \\
      6 & 11 \\
      7 & 23 \\
      8 & 47 \\
      9 & 106 \\
      10 & 235 \\
      11 & 551 \\
      12 & 1301 \\
      13 & 3159 \\
      14 & 7741 \\
      15 & 19320 \\
      16 & 48629 \\
      17 & 123867 \\
      18 & 317955 \\
      19 & 823065 \\
      20 & 2144505 \\
      21 & 5623756 \\
      22 & 14828074 \\
      23 & 39299897 \\
      24 & 104636890 \\
      25 & 279793450 \\
      26 & 751065460 \\
      27 & 2023443032 \\
      28 & 5469566585 \\
      29 & 14830871802 \\
      30 & 40330829030
    \end{tabular}
  }
  \qquad
  \subfloat[$k = 2$]{
    \label{tab:2trees}
    \begin{tabular}{r | r}
      $n$ & Unlabeled $2$-trees \\ \hline
      0 & 1 \\
      1 & 1 \\
      2 & 1 \\
      3 & 2 \\
      4 & 5 \\
      5 & 12 \\
      6 & 39 \\
      7 & 136 \\
      8 & 529 \\
      9 & 2171 \\
      10 & 9368 \\
      11 & 41534 \\
      12 & 188942 \\
      13 & 874906 \\
      14 & 4115060 \\
      15 & 19602156 \\
      16 & 94419351 \\
      17 & 459183768 \\
      18 & 2252217207 \\
      19 & 11130545494 \\
      20 & 55382155396 \\
      21 & 277255622646 \\
      22 & 1395731021610 \\
      23 & 7061871805974 \\
      24 & 35896206800034 \\
      25 & 183241761631584 \\
      26 & 939081790240231 \\
      27 & 4830116366008952 \\
      28 & 24927175920361855 \\
      29 & 129047003236769110 \\
      30 & 670024248072778235
    \end{tabular}
  }
\end{table}

\begin{table}[htb]
  \centering
  \ContinuedFloat
  \caption*{Enumerative data for $k$-trees with $n$ hedra, continued}
  \subfloat[$k = 3$]{
    \label{tab:3trees}
    \begin{tabular}{r | r}
      $n$ & Unlabeled $3$-trees \\ \hline
      0 & 1 \\
      1 & 1 \\
      2 & 1 \\
      3 & 2 \\
      4 & 5 \\
      5 & 15 \\
      6 & 58 \\
      7 & 275 \\
      8 & 1505 \\
      9 & 9003 \\
      10 & 56931 \\
      11 & 372973 \\
      12 & 2506312 \\
      13 & 17165954 \\
      14 & 119398333 \\
      15 & 841244274 \\
      16 & 5993093551 \\
      17 & 43109340222 \\
      18 & 312747109787 \\
      19 & 2286190318744 \\
      20 & 16826338257708 \\
      21 & 124605344758149 \\
      22 & 927910207739261 \\
      23 & 6945172081954449 \\
      24 & 52225283886702922 \\
      25 & 394398440097305861 \\
      26 & 2990207055800156659 \\
      27 & 22753619938517594709 \\
      28 & 173727411594289881739 \\
      29 & 1330614569159767263501 \\
      30 & 10221394007530945428347
    \end{tabular}
  }
  \qquad
  \subfloat[$k = 4$]{
    \label{tab:4trees}
    \begin{tabular}{r | r}
      $n$ & Unlabeled $4$-trees \\ \hline
      0 & 1 \\
      1 & 1 \\
      2 & 1 \\
      3 & 2 \\
      4 & 5 \\
      5 & 15 \\
      6 & 64 \\
      7 & 331 \\
      8 & 2150 \\
      9 & 15817 \\
      10 & 127194 \\
      11 & 1077639 \\
      12 & 9466983 \\
      13 & 85252938 \\
      14 & 782238933 \\
      15 & 7283470324 \\
      16 & 68639621442 \\
      17 & 653492361220 \\
      18 & 6276834750665 \\
      19 & 60759388837299 \\
      20 & 592227182125701 \\
      21 & 5808446697002391 \\
      22 & 57289008242377068 \\
      23 & 567939935463185078 \\
      24 & 5656700148512008902 \\
      25 & 56583199285317631541 \\
      26 & 568236762643725657852 \\
      27 & 5727423267612393252616 \\
      28 & 57924486783495226147615 \\
      29 & 587672090447840337304025 \\
      30 & 5979782184127687211698807
    \end{tabular}
  }
\end{table}

\begin{table}[htb]
  \centering
  \ContinuedFloat
  \caption*{Enumerative data for $k$-trees with $n$ hedra, continued}
  \subfloat[$k = 5$]{
    \label{tab:5trees}
    \begin{tabular}{r | r}
      $n$ & Unlabeled $5$-trees \\ \hline
      0 & 1 \\
      1 & 1 \\
      2 & 1 \\
      3 & 2 \\
      4 & 5 \\
      5 & 15 \\
      6 & 64 \\
      7 & 342 \\
      8 & 2321 \\
      9 & 18578 \\
      10 & 168287 \\
      11 & 1656209 \\
      12 & 17288336 \\
      13 & 188006362 \\
      14 & 2105867058 \\
      15 & 24108331027 \\
      16 & 280638347609 \\
      17 & 3310098377912 \\
      18 & 39462525169310 \\
      19 & 474697793413215 \\
      20 & 5754095507495584 \\
      21 & 70216415130786725 \\
      22 & 861924378411516159 \\
      23 & 10636562125193377459 \\
      24 & 131890971196221692874 \\
      25 & 1642577274341274449247 \\
      26 & 20538830517384955820622 \\
      27 & 257767439475728146293796 \\
      28 & 3246108646710813383678978 \\
      29 & 41008581189552637540038747 \\
      30 & 519599497193547405843864376
    \end{tabular}
  }
  \quad
  \subfloat[$k = 6$]{
    \label{tab:6trees}
    \begin{tabular}{r | r}
      $n$ & Unlabeled $6$-trees \\ \hline
      0 & 1 \\
      1 & 1 \\
      2 & 1 \\
      3 & 2 \\
      4 & 5 \\
      5 & 15 \\
      6 & 64 \\
      7 & 342 \\
      8 & 2344 \\
      9 & 19090 \\
      10 & 179562 \\
      11 & 1878277 \\
      12 & 21365403 \\
      13 & 258965451 \\
      14 & 3294561195 \\
      15 & 43472906719 \\
      16 & 589744428065 \\
      17 & 8171396893523 \\
      18 & 115094557122380 \\
      19 & 1642269376265063 \\
      20 & 23679803216530017 \\
      21 & 344396036645439675 \\
      22 & 5045351124912000756 \\
      23 & 74375422235109338507 \\
      24 & 1102368908826371717478 \\
      25 & 16417712341047912048640 \\
      26 & 245566461812077209025580 \\
      27 & 3687384661929075391318298 \\
      28 & 55566472746158319169779382 \\
      29 & 840092106663809502446963972 \\
      30 & 12739517442131428048314937036
    \end{tabular}
  }
\end{table}

\begin{table}[htb]
  \centering
  \ContinuedFloat
  \caption*{Enumerative data for $k$-trees with $n$ hedra, continued}
  \subfloat[$k = 7$]{
    \label{tab:7trees}
    \begin{tabular}{r | r}
      $n$ & Unlabeled $7$-trees \\ \hline
      0 & 1 \\
      1 & 1 \\
      2 & 1 \\
      3 & 2 \\
      4 & 5 \\
      5 & 15 \\
      6 & 64 \\
      7 & 342 \\
      8 & 2344 \\
      9 & 19137 \\
      10 & 181098 \\
      11 & 1922215 \\
      12 & 22472875 \\
      13 & 284556458 \\
      14 & 3849828695 \\
      15 & 54974808527 \\
      16 & 819865209740 \\
      17 & 12655913153775 \\
      18 & 200748351368185 \\
      19 & 3253193955012557 \\
      20 & 53619437319817482 \\
      21 & 895778170144927928 \\
      22 & 15129118461773051724 \\
      23 & 257812223121779545108 \\
      24 & 4426056869082751747930 \\
      25 & 76463433541541506345648 \\
      26 & 1328088941166844504424628 \\
      27 & 23175796698013212039339479 \\
      28 & 406103563562864890670029228 \\
      29 & 7142350290468621849814034057 \\
      30 & 126034923903699365819345698783
    \end{tabular}   
  }
\end{table}

\begin{table}[htb]
  \centering
  \ContinuedFloat
  \caption*{Enumerative data for $k$-trees with $n$ hedra, continued}
  \subfloat[$k = 8$]{
    \label{tab:8trees}
    \begin{tabular}{r | r}
      $n$ & Unlabeled $8$-trees \\ \hline
      0 & 1 \\
      1 & 1 \\
      2 & 1 \\
      3 & 2 \\
      4 & 5 \\
      5 & 15 \\
      6 & 64 \\
      7 & 342 \\
      8 & 2344 \\
      9 & 19137 \\
      10 & 181204 \\
      11 & 1926782 \\
      12 & 22638677 \\
      13 & 289742922 \\
      14 & 3996857019 \\
      15 & 58854922207 \\
      16 & 916955507587 \\
      17 & 14988769972628 \\
      18 & 255067524402905 \\
      19 & 4487202163529135 \\
      20 & 81112295567987808 \\
      21 & 1498874117898285574 \\
      22 & 28195965395340358096 \\
      23 & 538126404726276758908 \\
      24 & 10391826059632904271057 \\
      25 & 202624626664206041379718 \\
      26 & 3982593421723767068438772 \\
      27 & 78804180647706388187446055 \\
      28 & 1568191570016583843925943321 \\
      29 & 31359266621157738864915907470 \\
      30 & 629755261439815181073415721542
    \end{tabular}
  }
\end{table}

\begin{table}[htb]
  \centering
  \ContinuedFloat
  \caption*{Enumerative data for $k$-trees with $n$ hedra, continued}
  \subfloat[$k = 9$]{
    \label{tab:9trees}
    \begin{tabular}{r | r}
      $n$ & Unlabeled $9$-trees \\ \hline
      0 & 1 \\
      1 & 1 \\
      2 & 1 \\
      3 & 2 \\
      4 & 5 \\
      5 & 15 \\
      6 & 64 \\
      7 & 342 \\
      8 & 2344 \\
      9 & 19137 \\
      10 & 181204 \\
      11 & 1927017 \\
      12 & 22652254 \\
      13 & 290351000 \\
      14 & 4019973352 \\
      15 & 59642496465 \\
      16 & 941751344429 \\
      17 & 15724551551655 \\
      18 & 275926445572426 \\
      19 & 5057692869843759 \\
      20 & 96275031338911591 \\
      21 & 1892687812366295682 \\
      22 & 38234411627616084843 \\
      23 & 790120238796588845615 \\
      24 & 16638524087850961727575 \\
      25 & 355878246778832856290372 \\
      26 & 7710423952280397990026132 \\
      27 & 168843592748278228259801752 \\
      28 & 3730285520855433827693340329 \\
      29 & 83027821492843727307516904184 \\
      30 & 1859625249087075723295908757282
    \end{tabular}
  }
\end{table}

\begin{table}[htb]
  \centering
  \ContinuedFloat
  \caption*{Enumerative data for $k$-trees with $n$ hedra, continued}
  \subfloat[$k = 10$]{
    \label{tab:10trees}
    \begin{tabular}{r | r}
      $n$ & Unlabeled $10$-trees \\ \hline
      0 & 1 \\
      1 & 1 \\
      2 & 1 \\
      3 & 2 \\
      4 & 5 \\
      5 & 15 \\
      6 & 64 \\
      7 & 342 \\
      8 & 2344 \\
      9 & 19137 \\
      10 & 181204 \\
      11 & 1927017 \\
      12 & 22652805 \\
      13 & 290391147 \\
      14 & 4022154893 \\
      15 & 59741455314 \\
      16 & 945737514583 \\
      17 & 15871943695637 \\
      18 & 281035862707569 \\
      19 & 5226147900656616 \\
      20 & 101612006684523937 \\
      21 & 2056425123910104429 \\
      22 & 43127730369661586804 \\
      23 & 933229734601789336024 \\
      24 & 20749443766669472108394 \\
      25 & 472211306357077710523863 \\
      26 & 10961384502758318928846970 \\
      27 & 258737420965101611169934566 \\
      28 & 6193917223279376307682721853 \\
      29 & 150039339181032274342778699887 \\
      30 & 3670778410024403632885217999313
    \end{tabular}
  }
\end{table}

\clearpage
\section{Code listing}\label{c:code}
The recursive functional equations in \cref{eq:ctyogf,eq:ctxyogf,eq:ktgf} characterize the ordinary generating function $\tilde{\mathfrak{a}}_{k} \pbrac{x}$ for unlabeled general $k$-trees.
Code to compute the coefficients of this generating function using the computer algebra system Sage 5.0 \cite{sage} explicitly follows in \cref{lst:ktcode}.
Specifically, the generating function for unlabeled $k$-trees may be computed to degree $n$ by copying the included code into a Sage notebook, modifying the final line with the desired values of $k$ and $n$, and executing.

This code takes full advantage of \cref{lem:rhoprod,lem:ciclassfunc} to minimize the number of distinct calculations which must be performed; as a result, it is able to compute the number of $k$-trees on up to $n$ hedra quickly even for relatively large $k$ and $n$.
For example, the first thirty terms of the generating function for $8$-trees in \cref{tab:8trees} were computed on a modern desktop-class computer in approximately two minutes.

\lstinputlisting[caption=Sage code to compute numbers of $k$-trees), label=lst:ktcode, language=Python]{python/ktrees.sage}

\clearpage

\section*{Acknowledgments}
The author wishes to express his profound gratitude to Ira Gessel, who served as advisor and committee chair for the dissertation in which this research originated.
His guidance, advice, and insightful mentorship were invaluable throughout that process.

The author also offers his thanks to an anonymous referee, whose close reading and thorough commentary led to the correction of numerous small errors and oversights and whose broader recommendations led to significant improvement of the structure and narrative flow of the paper as a whole.

\bibliographystyle{plain} 
\bibliography{sources}

\begin{thebibliography}{10}

\bibitem{beinpipp:multidim}
Lowell~W. Beineke and Raymond~E. Pippert.
\newblock Multidimensional bipartite trees.
\newblock {\em Discrete Mathematics}, 272:17--26, 2003.

\bibitem{beinpipp:lktrees}
L.W. Beineke and R.E. Pippert.
\newblock The number of labeled k-dimensional trees.
\newblock {\em Journal of Combinatorial Theory}, 6(2):200--205, 1969.

\bibitem{bll:species}
F.~Bergeron, G.~Labelle, and P.~Leroux.
\newblock {\em Combinatorial species and tree-like structures}, volume~67 of
  {\em Encyclopedia of Mathematics and its Applications}.
\newblock Cambridge University Press, Cambridge, 1998.
\newblock Translated from the 1994 French original by Margaret Readdy, With a
  foreword by Gian-Carlo Rota.

\bibitem{gessel:spec2trees}
Tom Fowler, Ira Gessel, Gilbert Labelle, and Pierre Leroux.
\newblock The specification of 2-trees.
\newblock {\em Adv. in Appl. Math.}, 28(2):145--168, 2002.

\bibitem{agd:thesis}
Andrew Gainer-Dewar.
\newblock {\em $\Gamma$-species, quotients, and graph enumeration}.
\newblock PhD thesis, Brandeis University, 2012.

\bibitem{harpalm:graphenum}
F.~Harary and E.~Palmer.
\newblock {\em Graphical Enumeration}.
\newblock Academic Press, New York, 1973.

\bibitem{harpalm:acycsimp}
F.~Harary and E.M. Palmer.
\newblock On acyclic simplicial complexes.
\newblock {\em Mathematika}, 15:115--122, 1968.

\bibitem{hend:specfield}
Anthony Henderson.
\newblock Species over a finite field.
\newblock {\em J. Algebraic Combin.}, 21(2):147--161, 2005.

\bibitem{kob:ktlogspace}
Johannes K{\"{o}}bler and Sebastian Kuhnert.
\newblock The isomorphism problem for $k$-trees is complete for logspace.
\newblock In {\em Proceedings of the 34th International Symposium on
  Mathematical Foundations of Computer Science 2009}, pages 537--548, Berlin,
  Heidelberg, 2009. Springer-Verlag.

\bibitem{moon:lktrees}
J.W. Moon.
\newblock The number of labeled k-trees.
\newblock {\em Journal of Combinatorial Theory}, 6(2):196--199, 1969.

\bibitem{palm:l2trees}
E.~Palmer.
\newblock On the number of labeled $2$-trees.
\newblock {\em Journal of Combinatorial Theory}, 6:206--207, 1969.

\bibitem{palmread:p2trees}
E.~Palmer and R.~Read.
\newblock On the number of plane $2$-trees.
\newblock {\em Journal of the London Mathematical Society}, 6:583--592, 1973.

\bibitem{sage}
W.\thinspace{}A. Stein et~al.
\newblock {\em {S}age {M}athematics {S}oftware ({V}ersion 5.0)}.
\newblock The Sage Development Team, 2011.
\newblock {\tt http://www.sagemath.org}.

\end{thebibliography}

\end{document}